\def\Sbul{\mathrm{Sym}}
\def\bC{{\mathbf C}}
\def\bW{{\mathbf W}}
\def\fu{\mathfrak u}
\def\ff{{\mathfrak f}}
\def\gg{{\mathfrak g}}
\def\fu{\mathfrak{u}}
\numberwithin{equation}{section}
\def\loc{_1^\circ}
\def\sO{{\mathscr O}}
\def\sC{{\mathscr C}}
\def\sL{{\mathscr L}}
\def\sO{\mathscr{O}}
\def\sE{\mathscr{E}}
\def\sF{\mathscr{F}}
\newcommand{\CC}{\mathbb{C}}
\newcommand{\EE}{\mathbb{E}}
\newcommand{\LL}{L\bul} 
\newcommand{\tLL}{L^{\bullet \geq -1}}
\newcommand{\PP}{\mathbb{P}}
\newcommand{\QQ}{\mathbb{Q}}
\newcommand{\ZZ}{\mathbb{Z}}
\newcommand{\FF}{\mathbb{F}}
\def\sK{{\mathscr K}}
\newcommand{\bm}{\mathbf{m}}
\newcommand{\bn}{\mathbf{n}}
\newcommand{\cal}{\mathcal}
\def\ee{\mathfrak e}
\def\cC{{\cal C}}
\def\cE{{\cal E}}
\def\cK{{\cal K}}
\def\cL{{\cal L}}
\def\cM{{\cal M}}
\def\cO{{\cal O}}
\def\cS{{\cal S}}
\def\cX{X} 
\def\cY{{\cal Y}}
\def\M{M}
\def\bm{{\mathbf m}}
\def\fV{\mathfrak{V}}
\def\ff{\mathfrak{f}}
\def\ft{\mathfrak{t}}
\def\fu{\mathfrak{u}}
\def\fv{\mathfrak{v}}
\def\v1{{\vec{1}}}
\def\mapright#1{\,\smash{\mathop{\lra}\limits^{#1}}\,}
\def\toright#1{\,\smash{\mathop{\to}\limits^{#1}}\,}
\def\dual{^{\vee}}
\def\sta{^\ast}
\def\virt{^{\mathrm{vir}}}
\def\upmo{^{-1}}
\def\sta{^{\ast}}
\def\sta{^*}
\def\lra{\longrightarrow}
\def\lsta{_{\ast}}
\newcommand{\Si}{\Sigma}
\newcommand{\lam}{\lambda}
\newcommand{\si}{\sigma}
\def\begeq{\begin{equation}}
\def\endeq{\end{equation}}
\def\and{\quad{\rm and}\quad}
\def\bl{\bigl(}
\def\br{\bigr)}
\def\defeq{:=}
\def\sub{\subset}
\def\Ao{{\mathbb A}^{\!1}}
\def\and{\quad\text{and}\quad}
  \DeclareMathOperator{\Hom}{Hom}
 \DeclareMathOperator{\Aut}{Aut}
 \DeclareMathOperator{\rank}{rank}
\DeclareMathOperator{\spec}{Spec}
\newtheorem{prop}{Proposition}[section]
\newtheorem{theo}[prop]{Theorem}
\newtheorem{lemm}[prop]{Lemma}
\newtheorem{coro}[prop]{Corollary}
\newtheorem{defi}[prop]{Definition}
\newtheorem{defi-prop}[prop]{Definition-Proposition}
\newtheorem{defi-theo}[prop]{Definition-Theorem}
\def\fu{\mathfrak{u}}
\def\dbar{\overline{\partial}}
\def\Ob{\cO b}
\def\loc{_{\mathrm{loc}}}
\def\bul{^\bullet}
\def\sta{^\ast}
\def\oD{{\bar{D}}}
\def\Gm{\mathbb G_{\mathrm m}}
\def\sO{{\mathscr O}}
\def\beq{\begin{equation}}
\def\eeq{\end{equation}}
\def\vsp{\vskip5pt}
\def\Pf{{\PP^4}}
\def\bee{\begin{equation}}
\def\eeq{\end{equation}}
\def\sC{{\mathscr C}}
\def\fV{{\mathfrak V}}
\def\fW{{W}}
\def\bd{\delta} 
\def\ti{\tilde}
\def\fg{{\mathfrak g}}
\def\barM{{\overline{M}}}
\def\Vb{\mathrm{Vb}}
\def\ulog{^{log}}
\def\mapright#1{\,\smash{\mathop{\lra}\limits^{#1}}\,}
\def\Gm{\mathbb G_{\mathrm m}}
 \def\R{\mathbf R}
\def\Rpi{\mathbf R\pi}
\title{Witten's top Chern class via cosection localization}
\author{H-L. Chang, J. Li and W-P.Li}
\date{}
\begin{document}
\maketitle


\begin{abstract}  For a Landau Ginzburg space $([\CC^n/G],W)$,  we construct the Witten's top Chern classes
as algebraic cycles 
via cosection localized virtual cycles in case all sectors are narrow. We verify all axioms of
such classes.  We derive an explicit formula of such classes in the free case. We
prove that this construction is equivalent to the prior constructions of Polishchuk-Vaintrob, of Chiodo and of
Fan-Jarvis-Ruan.   
\end{abstract}

 

\section{Introduction}

In this paper, we construct and study Witten's top Chern class of the moduli of spin curves associated
to a Landau Ginzburg space using cosection localized virtual classes.

A Landau Ginzburg space (in short LG space) in this paper is a pair $([\CC^n/G],W)$ of a finite subgroup $G\le GL_n(\CC)$ and a non-degenerate quasi-homogeneous  $G$-invariant  polynomial $W$ in $n$-variables (see Definition \ref{WW}).
Given such an LG space, one forms the moduli stack of smooth $G$-spin curves $M_{g,\ell}(G)$, which when
$G=\Aut(W)$, takes the form
\beq\label{Wlog}
M_{g,\ell}(G)=\{[\cC,\cL_1,\cdots,\cL_n]\mid \cC \text{ stable}, W_a(\cL_1,\cdots,\cL_n)\cong \omega_{\cC}\ulog\}.
\eeq 
Here $\cC$ is a smooth $\ell$-pointed twisted (orbifold) curve, $\cL_j$'s are invertible sheaves on $\cC$,
and $W_a$'s are the monomials of $W$ (see 
details in \S2).
In \cite{Wi}, 
Witten demonstrated how to construct a ``topological gravity coupled with matter" using solutions to
his equation (i.e. Witten equation), which takes the form 
$$\dbar s+ (k+1) \bar s^k=0,\quad s\in C^\infty(\cC, \cL),
$$
in case of $([\CC/\ZZ_{k+1}], W=x^{k+1})$ and for $[\cC,\cL_j]\in M_{g,n}(G)$.
He conjectured that the 
partition functions of such $A_k$  singularities, and also other singularities of  $DE$ type, satisfy ADE integrable hierarchies.


The mathematical theory of Witten's ``topological gravity coupled with matter" has satisfactorily been worked out. 
The proper moduli of nodal spin curves has been worked out by   Abramovich and  Jarvis (\cite{Ja1,Ja2,AJ}).
The ``Witten's top Chern class" has been constructed by  Polishchuk-Vaintrob (\cite{PV1}), alternatively by Chiodo (\cite{Ch2,Ch3}) via $K$-theory, and by Mochizuki (\cite{Mo}) following Witten's approach.

The case for a general LG space has been worked out later by Fan-Jarvis-Ruan (cf. \cite{FJR1,FJR2}). 
Their construction is analytic in nature, and uses the Witten equations for $W$
\beq\label{Wn}\dbar s_j+\overline {\partial_j W}(s_1,\cdots,s_n)=0,  \quad s_j \in C^\infty(\cC,\cL_j),
\eeq
to construct the Witten's top Chern class of $M_{g,\ell}(G)$; they also proved all expected properties of
such classes. In line of that the GW theory of a smooth variety is a virtual counting of maps, this theory,
now commonly referred to as FJRW-theory, is an (enumeration) theory for the singularity $(W=0)/G$. 
We add that since the domain curves $\cC$ in $[\cC,\cL_j]\in M_{g,\ell}(G)$ are pointed twisted curves,
each $\cL_j$ associats to a representation (sector) of the automorphism group of a marked point of $\cC$. We use $\gamma$ to denote
this collection of representations. Those $\gamma$ such that all factors of the representation at all marked points
are non-trivial are called narrow (``Nevau-Schwarz"). The FJRW-theory treated all situations, including but
not restricted to the narrow case.
Witten's ADE integrable conjecture was solved by Faber-Shadrin-Zvonkine  for $A_n$ case (\cite{FSZ}), and by Fan-Jarvis-Ruan for DE case
(\cite{FJR2}).

In this paper, using cosection localized virtual cycles,  we construct  the Witten's top Chern class for an LG space $([\CC^n/G],W)$
in case all sectors are narrow; we also verify all expected properties of the virtual class using this construction.
Our construction is an algebraic analogue of Witten's argument using his equation. This allows us to prove 
that our construction yields the same classes as that of
Polishchuk-Vaintrob, of Chiodo and of Fan-Jarvis-Ruan. We comment that the equivalence of that of
Polishchuk-Vaintrob and of Chiodo is known; the equivalence of that of Polishchuk-Vaintrob and of Fan-Jarvis-Ruan
is new.
\vsp

We define the Witten's top Chern class to be the cosection localized virtual class of the moduli of $G$-spin curves with fields.
Let $([\CC^n/G],W)$ be an LG space.
Given integers $g$, $\ell$, and a collection of representations $\gamma$,
 denote $\barM_{g,\gamma}(G)$ to be the moduli stack of $G$-spin $\ell$-pointed genus $g$ twisted nodal curves banded by
$\gamma$, which parameterizes $[\cC,\cL_1,\cdots,\cL_n]$ so that $\cC$ are stable 
$\ell$-pointed genus $g$ twisted nodal curves and $\cL_j$'s are  invertible sheaves on $\cC$ such that, in addition to the
constraint in \eqref{Wlog} (when $G=\Aut(W)$)
the representations of $\cL_j$ restricted to the 
marked points of $\cC$ are given by the collection $\gamma$.
Following the work of the first two named authors (\cite{CL}), we form the moduli of $G$-spin curves with fields:
$$\barM_{g,\gamma}(G)^p=\{[\cC,\cL_j,\rho_j]_{i=1}^n\mid [\cC,\cL_j]\in \barM_{g,\gamma}(G),
\, \rho_j\in \Gamma(\cL_j)\}.
$$
It is a DM-stack, and has a perfect obstruction theory 
relative to $\barM_{g,\gamma}(G)$.
The forgetful morphism
\beq\label{11}
\barM_{g,\gamma}(G)^p \lra \barM_{g,\gamma}(G)
\eeq
has linear fibers, and has a zero-section by setting all $\rho_j=0$.

Using that $G\sub \Aut(W)$, and that $\gamma$ is narrow,
we use the polynomial $W$ to construct a cosection (homomorphism)
of the obstruction sheaf of $\barM_{g,\gamma}(G)^p$:
\beq\label{si-int}
\sigma: \Ob_{\barM_{g,\gamma}(G)^p}\lra \sO_{\barM_{g,\gamma}(G)^p}.
\eeq
We prove that the non-surjective loci of $\sigma$ is contained in the zero section of
\eqref{11}, 
which is $\barM_{g,\gamma}(G)$ and is proper. Applying cosection localized virtual class of
Kiem and the second named author (\cite{KL}), we obtain a cosection localized virtual class of $\barM_{g,\gamma}(G)^p$,
which we denote by 
\beq\label{W-class}
[\barM_{g,\gamma}(G)^p]\virt \in A\lsta (\barM_{g,\gamma}(G)).
\eeq

\begin{defi-theo}
Let $([\CC^n/G],W)$ be an LG space; let $g$, $\ell$ be non-negative integers and let $\gamma$ be a
collection of representations. Suppose $\gamma$ is narrow, 
we define the (cosection localized) Witten's top Chern class $\barM_{g,\gamma}(G)$ of
$([\CC^n/G],W)$ to be the cosection localized virtual class $[\barM_{g,\gamma}(G)^p]\virt$.
\end{defi-theo}

We verify the expected properties (axioms) of Witten's top Chern classes in \S 4.
And we prove the following comparison theorem.

\begin{theo}
The Witten's top Chern class $[\barM_{g,\gamma}(G)^p]\virt$ via cosection localization coincides with the Witten's top Chern class constructed by Polishchuk-Vaintrob when their construction applies;
its associated homology class coincides with the analytic 
Witten's top Chern class constructed by Fan-Jarvis-Ruan (in the narrow case).
\end{theo}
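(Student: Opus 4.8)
The plan is to prove the comparison theorem in two independent parts, corresponding to the two constructions being compared. For the coincidence with the Polishchuk--Vaintrob class, the strategy is to reduce to a statement about the moduli $\barM_{g,\gamma}(G)$ itself, rather than its space of fields $\barM_{g,\gamma}(G)^p$. Recall that $\barM_{g,\gamma}(G)^p\to\barM_{g,\gamma}(G)$ is a cone (in fact a complex of sheaves $[\mathbf{R}^0\pi_*\oplus_j\cL_j \to \mathbf{R}^1\pi_*\oplus_j\cL_j]$ on the base), and the cosection $\sigma$ of \eqref{si-int} is built from $W$. First I would recall the local model: \'etale-locally on $\barM_{g,\gamma}(G)$ we can present $\barM_{g,\gamma}(G)^p$ as (the total space of) a vector bundle $E$ over a smooth base $S$, with obstruction bundle $F$, and the cosection $\sigma$ becomes an explicit function of the fiber coordinates and $W$. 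The key computation is then the one already carried out by the first two named authors in \cite{CL} (for the quintic) generalized to an arbitrary narrow LG space: the cosection-localized virtual class of a vector bundle with a regular-function-type cosection coming from $W$ equals the localized top Chern class (in the sense of cosection-localized Gysin pullback) of the bundle, which is exactly the object Polishchuk--Vaintrob use to define their class as a ``fundamental matrix factorization'' pushed forward. So the heart of part one is to match the cosection localized Gysin map on $E$ with the two-periodic/matrix-factorization Chern character formula of Polishchuk--Vaintrob, using that in the narrow case the involved Koszul-type complex is supported on the zero section.

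For the coincidence with the Fan--Jarvis--Ruan analytic class, the approach is different and rests on the observation, already implicit in the introduction, that our construction is an algebraic transcription of Witten's equation \eqref{Wn}. The plan is: (i) invoke a deformation of the $\dbar$-operator on $\oplus_j\cL_j$ perturbed by the term $\oplus_j\overline{\partial_j W}$, exactly as in FJR's analytic construction, whose zero set is compact precisely because $\gamma$ is narrow --- the same reason our non-surjective locus of $\sigma$ sits in the (proper) zero section; (ii) identify the FJR virtual cycle with the Euler class of an obstruction bundle relative to the solution space of \eqref{Wn}, localized to the zero solution; (iii) show that the cosection $\sigma$ we build is, under the Dolbeault-to-\v{C}ech comparison, the linearization of the FJR perturbation term, so that the cosection-localized virtual class of Kiem--Li \cite{KL} agrees with the localized Euler class FJR extract. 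Since both constructions are homology classes supported on $\barM_{g,\gamma}(G)$, it suffices to compare them after pushing forward to $H_*\barM_{g,\gamma}(G)$, and one can do this \'etale-locally and then glue, using the functoriality of cosection localization.

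The main obstacle I expect is part one: precisely matching the cosection-localized virtual class with the Polishchuk--Vaintrob matrix-factorization class, because the two constructions live in genuinely different formalisms (localized Gysin maps and virtual cycles on one side; the derived category of matrix factorizations and its Chern character / fundamental class on the other). The bridge is the localized Chern character of a Koszul matrix factorization: one must show that when the potential $W$ is narrow, restricting to a spin curve forces the relevant complex to be exact off the zero section, so that the PV class is computed by a localized top Chern class of $F$, which is what the Kiem--Li construction yields. A secondary but real difficulty is a sign/orientation bookkeeping issue: both the cosection-localized class and the PV class carry an ambiguity that must be pinned down by a compatible choice of square root of the canonical bundle (the $W_a(\cL_j)\cong\omega_\cC\ulog$ condition in \eqref{Wlog}), and one must check these conventions agree. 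I would handle the gluing and base-change subtleties by the standard device of working over an \'etale atlas of $\barM_{g,\gamma}(G)$ where everything becomes a vector-bundle computation, invoking the already-established properties of cosection localization from \cite{KL} to conclude.
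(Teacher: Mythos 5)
Your overall shape (present $\barM_{g,\gamma}(G)^p$ as the zero locus of a section $\bar\zeta$ of a bundle $V$ over a finite-dimensional total space, build a second section/cosection $\eta$ out of $W$, and compare the Kiem--Li class with the Polishchuk--Vaintrob two-periodic Koszul class and with a localized Euler class) is the right one and is the one the paper follows, but as written the proposal has genuine gaps at precisely the points where the work lies. First, the device of verifying the class identities ``\'etale-locally and then glue'' is not valid: identities between Chow or homology classes cannot be checked on an \'etale atlas and patched, since classes that agree on each chart can still differ globally. The paper avoids this by producing a \emph{global} finite-dimensional model: it chooses (following Polishchuk--Vaintrob's resolution argument, Lemma \ref{lem6.2}) two-term complexes $\sF\bul_j\cong\Rpi_{M\ast}\cL_j$ on which the maps induced by the monomials $W_a$ are realized as honest complex homomorphisms $\nu_a$; this is what makes the global triple $(V,\bar\zeta,\eta)$ with $\eta\circ\bar\zeta=0$ exist at all, and you never address its existence. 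Second, even granting the model, the statement you call the ``heart'' --- that the PV class $td(V|_D)\cdot ch_D^Y(S\bul)\cdot[Y]$ equals the cosection-localized class --- does not follow from exactness of $S\bul$ off the zero section; that exactness is the easy part (it is Lemma \ref{Dege}). The paper proves the identity (Proposition \ref{P5.10}) by a genuinely global argument: blow up $(\eta=0)$ to make it a Cartier divisor, deform to the split case $V=K\oplus\sO_Y(-Z)$ over $Y\times\Ao$ using deformation invariance of both sides, and then carry out the Koszul/localized-Chern-character computation via Fulton's calculus. Nothing in your proposal supplies a substitute for these steps. Third, you implicitly identify the intrinsic relative obstruction theory of $\barM_{g,\gamma}(G)^p/\barM_{g,\gamma}(G)$ (and its cone and cosection) with the Koszul presentation $(\bar\zeta=0)\sub Y$; this is not automatic and is the content of Lemma \ref{lem6.8}, proved via a specific ample resolution and Grothendieck duality, together with independence of the choice of resolution (Lemma \ref{inde}).

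On the Fan--Jarvis--Ruan side your route also diverges in a way that creates work you have not accounted for: you propose to go back to the Witten equation, perturb the $\dbar$-operator, and identify the FJR cycle with a localized Euler class of an infinite-dimensional setup, with a ``Dolbeault-to-\v{C}ech'' matching of the cosection. The paper instead simply quotes FJR's own finite-dimensional statement (\cite[Thm 4.1.8(5)]{FJR2}): in the narrow case their class \emph{is} the localized Euler class $e\loc(V,d_\eta)$ of the finite-rank bundle $V$ over $Y$ with $d_\eta=\bar\zeta+c(\eta)$, and then compares $e\loc(V,d_\eta)$ with $0^!_{\sigma_\eta,loc}[\bC_\zeta]$ by the homotopy-of-graphs argument and the analytic description of cosection localization in \cite[Appendix]{KL}, before invoking the identification of cosections and cones above. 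If you insist on re-deriving the analytic side you are in effect reproving part of FJR, and step (iii) of your plan is exactly the delicate identification that Lemma \ref{lem6.8}(1) carries out algebraically; it cannot be dismissed as a linearization bookkeeping. Finally, the ``square root of the canonical bundle'' sign issue you raise is not where the sign lives: the only sign in the comparison is the factor $(-1)^\epsilon$ with $\epsilon=\rank F^0-\rank F^1$ appearing in Proposition \ref{axiom5} and Theorem \ref{cop-FJRW}.
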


We comment on the proof of the comparison theorem. 
Looking closer at the Witten's equation \eqref{Wn}, and realizing that the term $\dbar s_j$
gives the obstruction class to extending a holomorphic sections of $\cL_j$, the equation \eqref{Wn}
in effect gives a (differentiable) section of the obstruction sheaf of the moduli of spin curves with fields. Witten's demonstration of his top Chern class could be viewed as using the homology class generated by
the solution space of transverse perturbation of the equation \eqref{Wn}.

Working algebraically,
we substitute the complex conjugation used in \eqref{Wn} by the Serre duality, and thus transform the equation
\eqref{Wn} to the cosection \eqref{si-int}. As the cosection has proper non-surjective
loci, the cosection localized Gysin map gives us a virtual
cycle of $\barM_{g,\gamma}(G)^p$ supported in $\barM_{g,\gamma}(G)$. Using topological nature of the
cosection localized virtual class, we show that our construction yields the same class
as the class constructed by
FJRW-theory, when pushed to the ordinary homology group. 

To the algebro-geometric construction of Witten's top Chern class for $(\CC, x^{k+1})$ by Polishchuk-Vaintrob,
it relies on resolving the universal family of the moduli of spin curves
and define Witten's top Chern class as certain combination of Chern classes of complexes derived from
the resolution. 
We show in \S 5 that because the relative obstruction theory of $\barM_{g,\gamma}(G)^p$ to
$\barM_{g,\gamma}(G)$ is linear, the cosection localized virtual cycle can be expressed
in terms of localized Chern classes of certain complexes. 
This leads to a proof of the comparison theorem.

This paper is organized as follows. In \S 2, we recall the notion of $G$-spin curves with fields.
The Witten's top Chern class will be constructed in \S 3. 
In \S 4, we verify the axioms of such class, and derive a closed formula of the Witten's top Chern class in 
the free case. The comparison theorem will be proved in \S 5.

{\sl Acknowledgement}.  The authors thank T. Jarvis, B. Fantechi, A. Chiodo, H-J. Fan, Y-F. Shen and S. Li 
for helpful discussions and explanations of their results. 
The authors thank Y-B. Ruan for explaining various aspects of FJRW invariants. 
The first author is partially supported by Hong Kong GRF grant 600711. The second author is partially supported by NSF grants 
NSF-1104553 and FRG-1159156. The third author is partially supported by Hong Kong GRF grant 602512.

\section{generalized spin curves with fields}
We work with the field of complex numbers and let $\Gm=GL(\CC)$. In this section we recall the moduli of $G$-spin curves with fields for a finite subgroup
$G\le\Gm^n$.  Our treatment follows that of \cite{FJR2,PV2}, 
except that we use the term ``$G$-spin curve" instead of ``$W$-curve" in \cite{FJR2} or ``$\Gamma$-spin curve" in \cite{PV2}, as the moduli only depends on the group $G$.  

\subsection{LG spaces}
We fix an integer $d>1$ and a primitive $n$-tuple 
$$\bd=(\delta_1,\cdots, \delta_n)\in \ZZ_+^{n}.
$$
Let $\zeta_d=\exp(\frac{2\pi\sqrt{-1}}{d})$ be the standard generator of the group  $\mu_d\le \Gm$ of the
$d$-th roots of unity. We define 
$$\jmath_\bd=(\zeta_d^{\delta_1},\cdots, \zeta_d^{\delta_n})\in (\mu_d)^n \and  \text{the subgroup}\ \ \langle \jmath_\bd\rangle\le  (\mu_d)^{n}.
$$

\begin{defi}\label{G} A finite subgroup $G\le \Gm^n$ containing $\langle\jmath_\bd\rangle$ is called a $(d,\bd)$-group.
\end{defi}


\begin{defi}\label{WW}
We say a Laurent polynomial $W$ in variables $(x_1,\cdots, x_n)$ quasi-homogeneous 
of weight $(d,\bd)$ if 
\beq\label{d}
W(\lambda^{\delta_1}x_1, \ldots, \lambda^{\delta_n}x_n)=\lambda^dW(x_1, \ldots, x_n).
\eeq
When $W$ is a polynomial, we say it is non-degenerate if
$W$ has a single critical point at the origin, and $W$ does not contain  terms $x_ix_j$, $i\ne j$.
\end{defi}

Let $W$ be a weight $(d,\bd)$ quasi-homogeneous Laurent polynomial. 
We write    $W=\sum_{a=1}^m \alpha_aW_a$,   $\alpha_a\ne 0$,  as a sum of Laurent monomials $W_a$. 
These monomials define a group homomorphism
 \beq\label{homo}
\tau_W\colon \Gm^n\lra \Gm^m,\quad
x=(x_1, \ldots, x_n) \mapsto    (W_1(x), \ldots, W_m(x)).
\eeq 
We define 
$$\Aut(W):=\ker(\tau_W)\le \Gm^n.
$$
As $W$ is of weight $(d,\bd)$, 
$\jmath_\bd\in \Aut(W)$. When $\Aut(W)$ is finite, it is a $(d,\bd)$-group.
Also note that if $W$ is a nondegenerate polynomial as in Definition \ref{WW}, then $\Aut(W)$ is finite (\cite[Lem. 2.1.8]{FJR2}).

\begin{defi}\label{def-LG}
We say $([\CC^n/G],W)$ is an LG space if there is a pair $(d,\delta)$ so that $G$ is a $(d,\delta)$-group, and
$W$ is a non-degenerate quasi-homogeneous polynomial of weight $(d,\delta)$ such that $G\le \Aut(W)$.
\end{defi}



\vsp

We remark that given an LG space $([\CC^n/G],W)$, the orbifold 
$[\CC^n/G]$ with the superpotential $W$ is called an ``affine Landau Ginzburg model" because it admits a global affine chart 
$\CC^n\to [\CC^n/G]$.  One can work with `Hybrid LG model", LG spaces such as 
$(K_{\Pf},\bW)$ given in the Guffin-Sharpe-Witten model, whose mathematical construction for all genus (after A-twist) is carried out in \cite{CL}.


 

\subsection{Twisted curves}

We follow the notations and definitions in \cite{A-V, A-G-V}.

\begin{defi}\label{twisted-curves}
An $\ell$-pointed twisted  nodal curve over a scheme $S$ is a datum
\begin{eqnarray*}
(\Sigma_i^{\sC}\subset \sC\to C\to S)
\end{eqnarray*}
where

\begin{enumerate}
\item[({\bf a})] $\sC$ is a proper DM stack,   and \'etale locally a nodal curve over $S$.

\item[({\bf b})] $\Sigma_i^{\sC}$, for $1\le i\le \ell$,  are disjoint closed substacks of $\sC$ in the smooth locus of $\sC\to S$, and
$\Sigma_i^{\sC}\to S$ are \'etale  gerbes banded by $\mu_{r_i}$ for some $r_i$.

\item[({\bf c})]  The morphism $\pi\colon \sC\to C$ makes $C$ the coarse moduli of $\sC$.

\item[({\bf d})] Near stacky-nodes, $\sC$ is  balanced.

\item[({\bf e})]  $\sC\to C$ is an isomorphism over $\sC_{\rm ord}$, where $\sC_{\rm ord}$
is the complement of markings $\Sigma_i^\sC$ and the
stacky-singular locus of the projection $\sC\to S$.
\end{enumerate}
\end{defi}

Here are some  comments on this definition.
A stacky-node is a node that locally is not a scheme; that $\sC$ is {\it balanced} near nodes means that,
over a strictly Henselian local ring $R$, the strict henselization $\sC^{sh}$ is isomorphic to the stack $[U/{\bf \mu}_r]$,
where $U={\rm Spec}(R[x, y]/(xy-t))^{sh}$
for some $t\in R$ and $\zeta\in{ \mu}_r$ acts via $\zeta\cdot (x, y)=(\zeta x, \zeta^{-1}y)$.
Near the marking $\Sigma_i^{\sC}$, $\sC^{sh}$  is   isomorphic to $[U/\mu_r]$,
where  $U={\rm Spec}R[z]^{sh}$ and 
$\zeta\in \mu_r$ acts on $U$ via $z\to \zeta z$ for $z$  a local
coordinate on $U$ defining the marking. The automorphism group of $\sC$ at $\Sigma_i^\sC$ is canonically isomorphic to $\mu_r$.
Denote by $\Sigma_i^C$ the coarse moduli space of $\Sigma_i^{\sC}$, then the inclusion $\Sigma_i^C\sub C$ makes $(C, \Sigma_i^C$)
a nodal pointed curve.
We denote $\Sigma^\sC=\coprod_i \Sigma^\sC_i$ and $\Sigma^C=\coprod_i \Sigma^C_i$. 
We define the log-dualizing sheaves to be
$\omega_{\sC/S}^{\text{log}}=\omega_{\sC/S}(\Sigma^{\sC})$ and $\omega_{C/S}^{\text{log}}=\omega_{C/S}(\Sigma^{C})$, 
where $\omega_{\sC/S}$ and $\omega_{C/S}$ are dualizing sheaves of $\sC/S$ and $C/S$, respectively. 
Note $\omega_{\sC/S}^{\text{log}}=\pi^*\omega_{C/S}^{\text{log}}$ (\cite[Sec 1.3]{AJ}).

When there is no confusion, we will use $(\sC, \Sigma_i^{\sC})$, or simply $\sC$, to denote
$(\Sigma_i^{\sC}\subset \sC\to C\to S)$. 
We denote by $\mathfrak M_{g, \ell}^{\rm tw}$ the moduli stack of genus $g$ $\ell$-pointed twisted nodal  curves.

\subsection{$G$-spin curves}

For any   $(d,\bd)$-group $G$, we introduce
$$\Lambda_G=\{ \bm \mid  \bm \text{ are $G$-invariant Laurent monomials in } (x_1,\cdots, x_n) \}.
$$
 Since $\langle \jmath_\bd\rangle\sub G$, every $\bm\in \Lambda_G$  is of weight $ (w(\bm)\cdot d,\bd)$, where 
$$w(\bm)=d\upmo\cdot  \deg \bm(t^{\delta_1},\cdots,t^{\delta_n})\in \ZZ.
$$

\begin{lemm}
Via standard multiplication, $\Lambda_G$ is a free abelian group of rank $n$.
\end{lemm}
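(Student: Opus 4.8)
The claim is that $\Lambda_G$, the set of $G$-invariant Laurent monomials in $x_1,\dots,x_n$ under ordinary multiplication, is a free abelian group of rank $n$. First I would identify $\Lambda_G$ with a subgroup of $\ZZ^n$ via the isomorphism of groups $\ZZ^n \to \{\text{Laurent monomials}\}$ sending $\bm = (m_1,\dots,m_n)$ to $x^{\bm} = x_1^{m_1}\cdots x_n^{m_n}$; multiplication of monomials corresponds to addition of exponent vectors, so the full group of Laurent monomials is free abelian of rank $n$. Since any subgroup of a free abelian group of rank $n$ is itself free abelian of rank $\le n$, the only real content is the lower bound: $\Lambda_G$ has rank exactly $n$, equivalently $\Lambda_G$ has finite index in $\ZZ^n$ (or spans $\QQ^n$ after tensoring with $\QQ$).

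To get the full rank, I would use the hypothesis that $G$ is a $(d,\bd)$-group, i.e.\ $G \le \Gm^n$ is finite and contains $\langle \jmath_\bd \rangle$. The key point is that $G$ is \emph{finite}: a Laurent monomial $x^{\bm}$ is $G$-invariant exactly when the character $\chi_{\bm}\colon \Gm^n \to \Gm$, $t \mapsto t^{\bm} = \prod t_i^{m_i}$, restricts trivially to $G$. Since $G$ is finite, $\chi_{\bm}|_G$ is trivial as soon as $\bm$ is divisible by $|G|$ in the appropriate sense — more precisely, for every $\bm \in \ZZ^n$, the monomial $x^{|G|\cdot \bm}$ has the property that $\chi_{|G|\bm}(g) = \chi_{\bm}(g)^{|G|} = 1$ for all $g\in G$ (as $g^{|G|}$ acts trivially, or rather since the order of $\chi_{\bm}(g) \in \Gm$ divides $|G|$). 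Hence $|G|\cdot \ZZ^n \subseteq \Lambda_G \subseteq \ZZ^n$, so $\Lambda_G$ has finite index in $\ZZ^n$ and therefore rank $n$. Combined with the first paragraph, $\Lambda_G$ is free abelian of rank exactly $n$.

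The main (and essentially only) obstacle is simply setting up the dictionary cleanly: one must be careful that ``$G$-invariant'' means invariance of the monomial \emph{function}, which translates to the exponent vector $\bm$ pairing to $0$ in $\QQ/\ZZ$ (or to $1$ in $\Gm$) against every element of $G$ under the natural pairing between cocharacters and characters of $\Gm^n$. Once this is phrased correctly, finiteness of $G$ immediately forces $\Lambda_G \supseteq N\cdot \ZZ^n$ for $N = |G|$ (or for $N$ the exponent of the finite group, which is all one needs), and the structure theorem for finitely generated abelian groups — or directly, the fact that a subgroup of $\ZZ^n$ containing a finite-index subgroup is free of rank $n$ — finishes the argument. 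No delicate estimate or geometric input beyond finiteness of $G$ is required; the statement is really a bookkeeping lemma recording that the monomial lattice of $G$-invariants has the expected size.
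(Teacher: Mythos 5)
Your proposal is correct and is essentially the paper's own argument: identify the monomial lattice with $\ZZ^n$, observe that $G$-invariance forces $|G|\cdot\ZZ^n\subseteq\Lambda_G$ (the paper phrases this as the $|G|$-th power of every element of $\ZZ^n/\Lambda_G$ vanishing), and conclude that $\Lambda_G$ is a finite-index, hence free rank-$n$, subgroup of $\ZZ^n$. No meaningful difference in approach.
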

 \begin{proof} Every Laurent monomial can be represented as $x_1^{c_1}\cdots x_n^{c_n}$ with integers $c_1,\cdots,c_n$. In this way the multiplicative group
of Laurent polynomials is isomorphic to the additive group $\ZZ^n$. Hence $\Lambda_G$ is a subgroup of $\ZZ^n$. By definition
the $|G|$-th power of every element in $\ZZ^n/\Lambda_G$ vanishes. This proves the Lemma.
\end{proof}

One checks that $G=\{x\in\Gm^n \mid \bm(x)=1\, \text{for all }\bm\in\Lambda_G\}$. 
We pick $n$ generators $\bm_1,\cdots,\bm_n$ of $\Lambda_G$.

\begin{defi}\label{W-curves}
An $\ell$-pointed $G$-spin curve over a scheme $S$ is $(\sC,\Sigma_i^\sC, \sL_j,\varphi_k)$, consisting 
of an $\ell$-pointed twisted curve $(\sC,\Sigma^\sC_i)$ over $S$, $n$-invertible sheaves $\sL_1,\cdots, \sL_n$  
on $\sC$, and isomorphisms 
\beq\label{Ow}
\varphi_k\colon
\bm_k(\sL_1, \ldots, \sL_n)\mapright{\cong}(\omega_{\sC}^{\text{log}})^{w(\bm_k)}  ,\quad k=1,\cdots,n.  
\eeq
 An arrow between two $G$-spin curves $(\sC, \sL_j,\varphi_k)$ and $(\sC', \sL_j',\varphi_k')$ over $S$
consists of  $(\sigma, \eta_j)$, where $\sigma: \sC\to\sC'$ is an $S$-isomorphism of pointed
twisted curves, and $\eta_j: \sigma\sta\sL_j'\to \sL_j$ are isomorphisms that commute with the
isomorphisms $\varphi_k$ and $\varphi_k'$. 
\end{defi}

We comment that the definition of $G$-spin curves does not depend on the choice of the generators 
$\{\bm_k\}$ (cf. \cite[Prop 2.1.12]{FJR2}), once the type $(d,\bd)$ is specified.

\vsp
A $G$-spin curve $(\sC,\Sigma_i^\sC, \sL_j,\varphi_k)$ (over $\CC$) has monodromy representations along marked sections
and nodes. By definition, the automorphisms of each $\Sigma_i^\sC$ is a cyclic group, say $\mu_{r_i}$, which
acts on $\oplus_{j=1}^n \sL_j|_{\Sigma^\sC_i}$, and thus defines a homomorphism $\mu_{r_i}\to \Gm^n$.
Because of (\ref{Ow}) and that $\{\bm_k\}_{k=1}^n$ generates $\Lambda_G$, 
this homomorphism factors through a homorphism
\begin{eqnarray}\label{representable}
\gamma_i:\mu_{r_i}\longrightarrow G\le \Gm^n. 
\end{eqnarray}
 We call $\gamma_i$ the monodromy representation along $\Sigma_i^\sC$.

Similarly, for $q$ a node of $\sC$, we let $\hat\sC_{q+}$ and $\hat\sC_{q-}$ be the two
branches of the formal completion of $\sC$ along $q$, of the form $[\hat U/\mu_{r_q}]$, where
$\hat U=\spec\CC[\![x,y]\!]/(xy)$ and $\mu_{r_q}$ acts on $\hat U$ via $(x,y)^\zeta=(\zeta x,\zeta\upmo y)$,
and $\hat \sC_{q+}$ (resp. $\hat \sC_{q-}$) is $(y=0)\sub \hat U$ (resp. $(x=0)\sub \hat U$).
We let
$$\gamma_{q\pm}: \mu_{r_q}\lra G\le\Gm^n
$$
be the monodromy representation of $\oplus_{j=1}^n \sL_j\otimes_{\sO_\sC}\sO_{\hat \sC_{q\pm}}$ along
$[q/\mu_{r_q}]\sub \hat\sC_{q\pm}$. .

Denoting by $\gamma_{q+}\cdot\gamma_{q-}$ the composition of $(\gamma_{q+},\gamma_{q-}):\mu_{r_q}\to G\times G$
with the multiplication $G\times G\to G$, then by the balanced condition on nodes, we have
$\gamma_{q+}\cdot\gamma_{q-}=1$, the trivial homomorphism. We call $\gamma_{q+}$ the
mododromy representation of the node $q$, after a choice of $\hat\sC_{q+}$.  

\begin{defi}\label{def-2.7}
We say a $G$-spin curve $(\sC,\Sigma_i^\sC, \sL_j,\varphi_k)$ is stable if its coarse moduli space 
$(C, \Sigma^C_i)$ is a stable pointed curve, and if the monodromy representations of marked sections and nodes
are injective (representable).
\end{defi}

In this paper, given $\ell$, we use $\gamma=(\gamma_i)_{i=1}^\ell$ to denote a collection of injective homomorphisms
$\gamma_i: \mu_{r_i}\to G$ for some choices of $r_i\in\ZZ_+$.
Thus every stable $G$-spin curve with $\ell$ marked sections  will associate one such $\gamma=(\gamma_i)_{i=1}^\ell$ via 
monodromy representations. If $\gamma=(\gamma_i)^\ell_{i=1}$ is associated to some stable genus $g$ $G$-spin curves, we
call such $\gamma$ $g$-admissible.


\begin{lemm}[{\cite[Prop 2.2.8]{FJR2}}] \label{lem2.8}
Given non-negative integers $g$, a collection of
faithful representations $\gamma=(\gamma_1,\cdots, \gamma_\ell)$ is $g$-admissible if and only if, 
writing $\gamma_i$ in the form
$\mu_{r_i}\ni e^{2\pi \sqrt{-1}/r_i} \mapsto 
(e^{ 2\pi \sqrt{-1}  \Theta^i_1},\cdots , e^{2\pi \sqrt{-1} \Theta^i_n})$, $\Theta_j^i\in [0,1)$, the following
identity hold:
\beq\label{del}
\delta_j(2g-2+\ell)/d-\sum_{i=1}^n\Theta_j^i\in \ZZ, \quad j=1,\cdots, n.
\eeq  
\end{lemm}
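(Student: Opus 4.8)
The plan is to prove Lemma \ref{lem2.8} by translating the existence of a stable genus $g$ $G$-spin curve with the prescribed monodromy data $\gamma$ into a divisibility/degree condition on the line bundles $\sL_j$, following the argument of Fan-Jarvis-Ruan. First I would reduce to a convenient special curve: by the smoothing/deformation theory of twisted curves, a collection $\gamma=(\gamma_i)$ is $g$-admissible if and only if it is realized on some smooth $\ell$-pointed genus $g$ twisted curve $\sC$ with $\Sigma_i^\sC$ banded by $\mu_{r_i}$ (one can always smooth nodes, and conversely any such smooth realization gives a stable one after possibly stabilizing the coarse curve; the monodromy at the markings is a deformation invariant). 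So the question becomes: for which $\gamma$ do there exist invertible sheaves $\sL_1,\dots,\sL_n$ on such a smooth twisted curve $\sC$ together with isomorphisms $\varphi_k\colon \bm_k(\sL_1,\dots,\sL_n)\cong (\omega_\sC^{\log})^{w(\bm_k)}$ with the prescribed representations $\gamma_i$ at $\Sigma_i^\sC$?

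Next I would make the degree computation on the twisted curve explicit. For an invertible sheaf $\sF$ on $\sC$ with local monodromy exponent $\Theta\in[0,1)\cap \frac1{r_i}\ZZ$ at $\Sigma_i^\sC$, the coarse pushforward $|\sF|$ on $C$ has $\deg_C \sF = \deg_C |\sF| = \deg_\sC \sF - \sum_i \Theta_i$ where $\deg_\sC$ lies in $\frac1{\mathrm{lcm}(r_i)}\ZZ$; in particular a line bundle with prescribed monodromy exists on $\sC$ iff its stacky degree minus $\sum_i\Theta_i$ is an integer — this is just the condition that the rounded-down bundle descends. Writing $\sL_j$ with monodromy exponents $\Theta^i_j$ at $\Sigma^\sC_i$, and using that $\deg_\sC \omega_\sC^{\log} = 2g-2+\ell$, the isomorphism $\varphi_k$ forces $\sum_j (\text{exponent of }x_j\text{ in }\bm_k)\cdot \deg_\sC\sL_j = w(\bm_k)(2g-2+\ell)$. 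Since the $\bm_k$ generate $\Lambda_G$ and $\jmath_\bd\in G$, the monomials $x_j^d$ (suitably, using that $\bd$ is primitive and $\langle\jmath_\bd\rangle\subseteq G$) lie in the saturation in a way that pins down $d\cdot\deg_\sC\sL_j = \delta_j(2g-2+\ell)$, i.e. $\deg_\sC \sL_j = \delta_j(2g-2+\ell)/d$. Combining with the integrality requirement $\deg_\sC\sL_j - \sum_i \Theta^i_j\in\ZZ$ gives exactly \eqref{del}.

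For the converse direction I would argue that \eqref{del} is sufficient: given $\gamma$ satisfying \eqref{del}, pick any smooth $\ell$-pointed genus $g$ twisted curve with bandings $\mu_{r_i}$ (these exist for $g$ large, and for small $g$ one uses that $r_i$ divides the orders dividing $|G|$ so the standard root-stack constructions apply — this is where I would cite the existence portion of \cite[Prop 2.2.8]{FJR2} or redo it via root stacks over a chosen effective divisor). Condition \eqref{del} guarantees each candidate degree $\delta_j(2g-2+\ell)/d - \sum_i\Theta^i_j$ is an integer, hence one can choose $\sL_j$ on $\sC$ with the right degree and monodromy; the remaining task is to adjust the $\sL_j$ within their degree/monodromy class so that the finitely many isomorphisms $\varphi_k$ can be realized simultaneously. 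This is possible because the obstruction to each $\varphi_k$ lies in $\Pic^0$ of the coarse curve (a divisible group), and the constraints from different $k$ are compatible precisely because the $\bm_k$ generate $\Lambda_G$ and $W$ is of weight $(d,\bd)$; one solves the resulting linear system over $\Pic^0(C)$.

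The main obstacle I expect is the converse (sufficiency) direction — specifically, realizing all $n$ isomorphisms $\varphi_k$ at once after fixing the degrees and monodromies. The degrees are forced and the monodromy-compatibility at marked points follows from $\gamma_i$ landing in $G$, but one must check there is no further global obstruction; the key input is that $\Lambda_G\cong\ZZ^n$ (the Lemma just before Definition \ref{W-curves}) so the system of conditions coming from the generators $\bm_k$ is exactly $n$-dimensional and nondegenerate, and that $\Pic^0(C)$ is divisible so each individual equation is solvable. Handling the small-genus cases (where not every $(r_i)$ is automatically realizable) and the stability/representability conditions of Definition \ref{def-2.7} is a secondary technical point that I would dispatch by invoking the existence statement in \cite[Prop 2.2.8]{FJR2} directly rather than reproving it.
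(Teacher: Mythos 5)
The paper itself gives no argument for this lemma—it is quoted directly from \cite[Prop 2.2.8]{FJR2}—and your sketch follows the same standard route as that source: necessity by pushing the $\sL_j$ down to the coarse curve and using that the isomorphisms $\varphi_k$ force $\deg\sL_j=\delta_j(2g-2+\ell)/d$, sufficiency by choosing bundles with the prescribed fractional degrees and monodromies and then correcting by elements of $\Pic^0(C)$, which is possible since $\Pic^0(C)$ is divisible and the exponent matrix of the generators of $\Lambda_G$ has nonzero determinant. Your outline is essentially correct; the only wobbly spot is the remark about ``$x_j^d$ lying in the saturation,'' which should simply be the observation (which you invoke later) that $\Lambda_G\sub\ZZ^n$ has full rank $n$, so the relations $\sum_j c_{kj}\bl\deg\sL_j-\delta_j(2g-2+\ell)/d\br=0$ pin down all the degrees, and the worry about small genus is unnecessary, since once $2g-2+\ell>0$ a smooth $\ell$-pointed genus $g$ twisted curve with $\mu_{r_i}$-gerbe markings is obtained from any smooth stable pointed curve by taking root stacks at the marked points.
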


\begin{defi}\label{gamma} Given $g$ and let $\gamma$ be a $g$-admissible collection of representations.
We say a $G$-spin curve $(\sC,\Sigma_i^\sC, \sL_j,\varphi_k)$ is banded by $\gamma$ if $\gamma$ is identical
to the collection of representations $\Aut(\Sigma_i^\sC)\to \Aut(\oplus_{j=1}^n\sL_j|_{\Sigma_i^\sC})$.
\end{defi}


Given a $g$-admissible $\gamma$,
it is routine to define the notion of families of genus $g$ $\gamma$-banded $G$-spin curves; 
define arrows between two such families,  and define pullbacks. Accordingly, we define 
$\barM_{g, \gamma}(G)$ 
to be the groupoid of families of stable genus $g$, $\gamma$-banded $G$-curves. 
We define 
$$\barM_{g, \ell}(G)\defeq \coprod_\gamma  \barM_{g, \gamma}(G),
$$
where $\gamma$ runs through all possible $g$-admissible $\gamma$. 
\vsp

The stack $\barM_{g, \ell}(G)$ is a smooth proper DM-stack with projective coarse moduli;
the forgetful morphism from $\barM_{g, \ell}(G)$ to the moduli space $ \barM_{g,\ell}$ of $\ell$-pointed stable curves is quasi-finite  (cf. \cite{FJR2} and \cite[Prop 3.2.6]{PV2}). Thus
$\barM_{g, \gamma}(G)$ is a smooth proper DM-stack.

When $G\le G'$, then $\Lambda_{G'}\le\Lambda_G$ and the generators $\bm'_i$ of 
$\Lambda_{G'}$ can be expressed as products of the generators $\bm_i^{\pm 1}$ of $\Lambda_G$. 
This shows that the universal family of $\barM_{g,\gamma}(G)$ induces a  morphism
$\barM_{g,\gamma}(G)\to \barM_{g,\gamma}(G')$, independent of the choices involved. The induced morphism
between their coarse modulis is both open and closed embedding.

 

\vsp
We now suppose $G$ is part of an LG space $([\CC^n/G],W)$. Write $W=\sum_{a=1}^m \alpha_aW_a$,
where $\alpha_a\ne 0$; then $W_a\in \Lambda_G$.
Let $\bm_1,\cdots,\bm_n$ be the chosen generators of $\Lambda_G$. 
Then there are Laurent monomials $\bn_a$ so that $W_a=\bn_a(\bm_1,\cdots,\bm_n)$. 
Consequently, the isomorphisms
$\varphi_1,\cdots,\varphi_n$ in \eqref{Ow} induce
\beq\label{Oow}
\upvarphi_a\defeq \bn_a(\varphi_1,\cdots,\varphi_n)\colon W_a(\sL_1,\cdots, \sL_n)\mapright{\cong}
\omega_{\sC}\ulog,\quad 1\le a\le m.
\eeq
   
We next work with the universal family of $\barM_{g, \gamma}(G)$. Let
$\pi: \cC\to \barM_{g, \gamma}(G)$, invertible sheaves $\cL_1,\cdots,\cL_n$ over $\cC$,  and $n$ isomorphisms as in \eqref{Ow} be part of the universal family of $\barM_{g, \gamma}(G)$. Following the discussion preceding \eqref{Oow}, we obtain $m$ induced isomorphisms
\beq\label{uni}
\Phi_a: W_a(\cL_1,\cdots,\cL_n)\mapright{\cong} \omega^{log}_{\cC/\barM_{g, \gamma}(G)},\quad 1\le a\le m.
\eeq



%

%

\subsection{Moduli of $G$-spin curves with fields}
We begin with its definition.

\begin{defi}\label{w-curve-field} A stable $\gamma$-banded $G$-spin curve with fields
consists of a stable $\gamma$-banded $G$-spin curve 
$(\sC,\Sigma_i^\sC, \sL_j,\varphi_k) \in \barM_{g,\gamma}(G)$
together with $n$ sections
$\rho_j\in \Gamma(\sC, \sL_j)$, $j=1,\cdots, n$. 
\end{defi}

It is routine to form the stack of families of stable genus $g$, $\gamma$-banded $G$-spin curves with fields. 
We denote this stack by $\barM_{g,\gamma}(G)^p$. 

\begin{theo}\label{thm2.8}
For the data $g$, $G$ and $\gamma$ given, the stack
$\barM_{g,\gamma}(G)^p$ is a separated DM stack of finite type.
\end{theo}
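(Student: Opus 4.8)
The plan is to realize $\barM_{g,\gamma}(G)^p$ as a relative affine construction over the known stack $\barM_{g,\gamma}(G)$, and then to transport the needed geometric properties along the structure morphism \eqref{11}. First I would note that by the discussion preceding Definition \ref{w-curve-field}, the stack $\barM_{g,\gamma}(G)^p$ sits in a forgetful morphism $\pi\colon\barM_{g,\gamma}(G)^p\to\barM_{g,\gamma}(G)$ whose fiber over a family $(\sC,\Sigma_i^\sC,\sL_j,\varphi_k)$ is the $S$-points of the functor $S\mapsto \bigoplus_{j=1}^n\Gamma(\sC_S,\sL_{j,S})$. Using the universal curve $p\colon\cC\to\barM_{g,\gamma}(G)$ and the universal sheaves $\cL_j$, this functor is represented by the total space of the (coherent) pushforward complex $\mathbf{R}p_*\bigl(\bigoplus_{j=1}^n\cL_j\bigr)$; more precisely, since $p$ is proper and flat of relative dimension one and the $\cL_j$ are flat over the base, cohomology and base change applies, and $\barM_{g,\gamma}(G)^p$ is the abelian cone $\mathrm{Spec}\,\Sym^\bullet\bigl(\mathbf{R}^\bullet p_*(\oplus_j\cL_j)\bigr)^\vee$ in the sense of Behrend--Fantechi. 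This is the key structural identification.

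With this in hand, the proof proceeds by checking the three assertions in turn. For \emph{DM-ness}: an abelian cone over a DM stack is again a DM stack, because locally it is $\mathrm{Spec}$ of a sheaf of algebras over a DM stack; alternatively, the automorphisms of a $G$-spin curve with fields inject into the automorphisms of the underlying $G$-spin curve (an arrow $(\sigma,\eta_j)$ is determined by $\sigma$ together with the $\eta_j$, and the $\eta_j$ commuting with $\varphi_k$ are rigidified once $\sigma$ is fixed on a twisted curve with faithful monodromy), and since $\barM_{g,\gamma}(G)$ has finite unramified inertia by Definition--Theorem context (it is a smooth proper DM stack, as recalled just before this subsection), so does $\barM_{g,\gamma}(G)^p$; one then invokes the standard characterization of DM stacks. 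For \emph{finite type}: the base $\barM_{g,\gamma}(G)$ is of finite type, and a cone associated to a coherent sheaf over a Noetherian base is of finite type, so $\barM_{g,\gamma}(G)^p$ is of finite type. For \emph{separatedness}: the morphism $\pi$ is affine (hence separated) since it is the relative $\mathrm{Spec}$ of a sheaf of algebras, and $\barM_{g,\gamma}(G)$ is separated (indeed proper), so the composite structure morphism of $\barM_{g,\gamma}(G)^p$ is separated.

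The main obstacle I anticipate is not any one of these formal deductions but rather making the identification of $\barM_{g,\gamma}(G)^p$ with the abelian cone precise enough to quote the representability results: one must be careful that the $\cL_j$ live on the \emph{twisted} universal curve and that pushforward from a tame DM stack to the base still satisfies cohomology and base change (this is standard for tame stacks in characteristic zero, which is our setting). A secondary technical point is verifying that no automorphisms are introduced by the fields $\rho_j$ themselves --- i.e., that the action on sections $\rho_j$ by automorphisms of the $G$-spin curve is already accounted for, so that finiteness of inertia is inherited. Both points are routine given the cited literature (\cite{FJR2,PV2,AJ} for tameness and base change, \cite{CL} for the cone construction in the analogous setting), so I would present them as lemmas and keep the proof short, spending most of the write-up on the cone identification and citing Behrend--Fantechi for the abelian-cone formalism.
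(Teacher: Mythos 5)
Your proposal follows essentially the same route as the paper: identify $\barM_{g,\gamma}(G)^p$ with the direct-image cone $C(\pi_{M\ast}\cE_M)$ over $M=\barM_{g,\gamma}(G)$ and realize it as a relative $\Spec$ of a symmetric algebra, so that affineness over the smooth proper DM stack $M$ yields DM-ness, finite type and separatedness. The only caveat is that your shorthand $\Spec\,\Sym^\bullet\bigl(\mathbf{R}^\bullet p_*(\oplus_j\cL_j)\bigr)^\vee$ should be made precise via Grothendieck--Serre duality as $\Spec_M\Sym^\bullet R^1\pi_{M\ast}(\cE_M\dual\otimes\omega_{\cC_M/M})$ (rather than via cohomology and base change for $R^0p_*$, which can fail), which is exactly the identification the paper quotes from \cite[Prop.~2.2]{CL} together with \cite{Ni}.
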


\begin{proof}
For notational simplicity, we abbreviate $\M=\barM_{g,\gamma}(G)$ and denote by
$\pi_M:\cC_M\to M$ with $\cL_{M,1},\cdots ,\cL_{M,n}$ the universal invertible sheaves of $M$. Let
$\cE_M=\cL_{M,1}\oplus\ldots \oplus \cL_{M,n}$. Following \cite{CL}, we denote by
$C(\pi_{M\ast}\cE_M)(S)$ the groupoid of $(f,\rho_S)$, where $f: S\to M$ and 
$\rho_S\in \Gamma(\cC_M\times_M S, f\sta\cE_M)$.
With obviously defined arrows among elements in $C(\pi_{M\ast}\cE_M)(S)$ and pullback 
$C(\pi_{M\ast}\cE_M)(S)\to C(\pi_{M\ast}\cE_M)(S')$
for $S'\to S$, they form a stack $C(\pi_{M\ast}\cE_M)$. It is direct to check (cf. \cite{CL}) that 
$C(\pi_{M\ast}\cE_M)$ is canonically isomorphic to
$\barM_{g,\gamma}(G)^p$, and is a DM stack, quasi-projective over $M$. 

Alternatively, using Grothendieck duality for DM stacks (cf.  \cite{Ni}), the proof of \cite[Prop. 2.2]{CL} shows that
$$C(\pi_{M\ast}\cE_M)=\spec_M  \Sbul R^1 \pi_{M\ast} (\cE_M\dual\otimes\omega_{\cC_M/M}).
$$
This proves the Theorem.
\end{proof}

We used the construction $C(\pi_{M\ast}\cE_M)$ because later on  we will  quote the construction of
the obstruction theory worked out in \cite{CL}.

\section{The relative obstruction theory and cosections}
In the next two sections, we fix an LG space $([\CC^n/G],W)$ of 
weight $(d,\bd)$.  We fix $g$ and $\ell$, and also a $\gamma=(\gamma_i)_{i=1}^\ell$,
where each $\gamma_i: \mu_{r_i}\to G$ is injective.

\begin{defi}\label{NS}
We say that $\gamma_i:\mu_{r_i}\to G$ is narrow (Neveu-Schwarz)
if the composite of $\gamma_i: \mu_{r_i}\to G\sub  \Gm^n$ with any projection to its factor $\Gm^n\to\Gm$ is non-trivial.
We say $\gamma=(\gamma_i)_{i=1}^\ell$ is narrow if every $\gamma_i$ is narrow.
\end{defi}

Like before, we abbreviate 
$$M= \barM_{g,\gamma}(G)\and \cX=\barM_{g,\gamma}(G)^p.
$$

\subsection{The perfect obstruction theory}

Let 
\beq\label{uni1}
\bl \Sigma_{\cX,i}
\sub\cC_\cX\mapright{\pi_\cX}\cX,\,  \cL_{\cX,j}, \, \varphi_k,\, 
 \rho_{\cX,j}\br 
\eeq
be the universal family
of $\cX$. By \cite[Prop 2.5]{CL}, $\cX= C(\pi_{M\ast}\cE_M) $  relative to 
$M$ has a tautological perfect obstruction theory, (letting $\cE_\cX:=\oplus \cL_{\cX,j}$,)
\beq\label{XY}\phi_{\cX/M}: (L\bul_{\cX/M})\dual \lra E\bul_{\cX/M}\defeq  \R \pi_{\cX\ast}\cE_\cX.
\eeq

\vsp
A consequence of this description of the perfect obstruction theory is a formula of its virtual dimension. 
Let $\xi=(C, \Sigma_i, L_j,\varphi_k, \rho_j)$ be a closed point of $\cX$. 
Following the notation of Lemma \ref{lem2.8}, 
the Riemann-Roch theorem \cite[Thm.7.2.1]{A-G-V} calculates the  virtual dimension of $\cX/ M$  to be
$$\text{dim}\, H^0(E\bul_{\cX/M}|_\xi)-\text{dim}\, H^1(E\bul_{\cX/M}|_\xi)
=n(1-g)+\sum_j \deg L_j  - \sum_{i,j} \Theta^i_j.$$
Combined with $\dim M=3g-3+\ell$, we obtain
\beq\label{vd}
\text{vir}.\dim \cX= (n-3)(1-g)+\ell+\sum_j \deg L_j  - \sum_{i,j} \Theta^i_j.
\eeq
Note that $\deg L_j=\delta_j(2g-2+\ell)/d$.
\subsection{Construction of a cosection}
Because $\gamma$ is narrow, 
we have the following useful isomorphism.

\begin{lemm}\label{hi}
Let $S$ be a scheme and let $\pi:\sC\to S$ be a flat family of twisted curve; let $\Sigma \sub\sC$ be a closed substack so that $\Sigma\to S$ is an
\'etale gerbe banded by $\mu_r$. Let $ \sL$ be a line bundle on $\sC$ so that for every $x\in \Sigma$, the homomorphism $\Aut(x)\to \Aut(\sL|_x)$ 
is via $\zeta_r\mapsto \zeta_r^k$, $1\le k<r$.  Then for each integer $1\leq c\leq k$, the homomorphism
$$\R \pi \lsta\sL(-c\Sigma)\mapright{} \R \pi\lsta \sL 
$$ 
induced by $\sL(-c\Sigma)\to\sL$ is a quasi-isomorphism.
\end{lemm}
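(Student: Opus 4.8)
The plan is to reduce the assertion to a vanishing statement for $\R\pi_*$ of the quotient sheaf supported along $\Sigma$, and then to verify that vanishing by a local $\mu_r$-weight computation in which the narrowness hypothesis $1\le k<r$ is exactly what is needed.

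First I would use the short exact sequence of $\sO_\sC$-modules
\[
0\lra \sL(-c\Sigma)\lra\sL\lra \sQ\lra 0,\qquad
\sQ\defeq \sL\otimes_{\sO_\sC}\bigl(\sO_\sC/\sO_\sC(-c\Sigma)\bigr),
\]
so that $\sQ$ is scheme-theoretically supported on the $c$-th thickening $\Spec_\sC\bigl(\sO_\sC/\sO_\sC(-c\Sigma)\bigr)$ of $\Sigma$. Applying $\R\pi_*$ turns this into a distinguished triangle, and the cone of $\R\pi_*\sL(-c\Sigma)\to\R\pi_*\sL$ is $\R\pi_*\sQ$; hence the lemma is equivalent to $\R\pi_*\sQ=0$. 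Moreover $\sQ$ carries the finite filtration $\sQ=\sQ_0\supset\sQ_1\supset\cdots\supset\sQ_c=0$ with $\sQ_j=\sL(-j\Sigma)/\sL(-c\Sigma)$, whose graded pieces are the line bundles on $\Sigma$
\[
\sN_j\defeq \sQ_j/\sQ_{j+1}\;\cong\;\sL|_\Sigma\otimes (N_{\Sigma/\sC}^\vee)^{\otimes j},\qquad 0\le j\le c-1,
\]
where $N_{\Sigma/\sC}^\vee$ is the conormal bundle. So it suffices to prove $\R\pi_*\sN_j=0$ for each such $j$.

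Next I would exploit the hypotheses on $\Sigma$. Since $\Sigma\to S$ is an étale gerbe banded by $\mu_r$ and we work in characteristic $0$, $\Sigma$ is tame and $\mu_r$ is linearly reductive; consequently $\R(\pi|_\Sigma)_*$ is exact, concentrated in degree $0$, and on a line bundle on $\Sigma$ it returns its $\mu_r$-invariant part, which vanishes unless that line bundle has $\mu_r$-weight $\equiv 0\pmod r$. It therefore remains to compute the weight of $\sN_j$. Étale locally near $\Sigma$ the twisted curve has the standard form $\bigl[\Spec\sO_S[z]/\mu_r\bigr]$ with $\Sigma=(z=0)$ and $\zeta\in\mu_r$ acting on points by $z\mapsto\zeta z$, so the function $z$ has $\mu_r$-weight $-1$ and $N_{\Sigma/\sC}^\vee$ has weight $-1$; by hypothesis $\sL$ is $\mu_r$-equivariantly locally free near $\Sigma$ with a generator of weight $k$, whence $\sL|_\Sigma$ has weight $k$ and $\sN_j$ has weight $k-j$.

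Finally I would put the numerics together: for $0\le j\le c-1$ and $1\le c\le k<r$, the integer $k-j$ lies in $\{k-c+1,\dots,k\}\subseteq\{1,\dots,r-1\}$, hence is never divisible by $r$. This is the one place the narrowness hypothesis $k\ge 1$ is used: it forces $k-c+1\ge 1$, so no weight can reach $0$. Thus $\R\pi_*\sN_j=0$ for all $j$, so $\R\pi_*\sQ=0$, and $\R\pi_*\sL(-c\Sigma)\to\R\pi_*\sL$ is a quasi-isomorphism. The only genuinely delicate point is the weight bookkeeping — translating the geometric action $z\mapsto\zeta z$ correctly into $z$ being a weight-$(-1)$ function, so that removing $c\Sigma$ walks the weight down from $k$ to $k-c+1$ while staying inside the narrow window $\{1,\dots,r-1\}$; once that is set up, the remaining steps are the formal tame-gerbe argument above. (If $\Sigma$ meets the nodal locus the same computation applies branch by branch to the two generators of the ideal of the node.)
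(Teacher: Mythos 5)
Your proposal is correct and follows essentially the same route as the paper: the short exact sequence $0\to\sL(-c\Sigma)\to\sL\to\sL|_{c\Sigma}\to 0$ together with the vanishing of $\R\pi_\ast$ of the quotient, forced by the $\mu_r$-weight count staying in $\{1,\dots,k\}\subset\{1,\dots,r-1\}$. Your filtration of the quotient into line bundles on the gerbe is just a structured unwinding of the paper's one-line computation that $f\in\CC[z]/(z^{c})$ with $f(\zeta z)=\zeta^k f(z)$ must vanish, and your weight bookkeeping matches that convention.
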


\begin{proof}
Let $p: \sC\to C$ be the coarse moduli of $\sC$. 
 For $1\le c\le k$, we  have the exact sequence of sheaves
$$0\lra \sL(-c\Sigma)\lra \sL \lra \sL|_{c\Sigma}\lra 0.
$$
The condition $1\leq k < r$ and $1\leq c\leq k$ implies $\mathbf R \pi\lsta(\sL|_{c\Sigma})=0$, because for
$f=f(z)\in\CC[z]/(z^{c})$ the condition $f(\zeta z)=\zeta^k f(z)$ forces $f=0$.
The desired quasi-isomorphism then follows from applying $\mathbf R \pi\lsta$ to the exact sequence.
\end{proof}
 
Let $\pi_M:\cC_M\to M$, $\Sigma_{M,i}\sub\cC_M$ and $\cL_{M,j}$ be the 
universal family of $M$. We denote $\Sigma_M=\coprod_i \Sigma_{M,i}$. We suppose $\gamma$ is narrow from now on.
  
\begin{prop}\label{temp} Suppose $\gamma$ is narrow.
Then the morphism 
$$X'\defeq C(\pi_{M\ast}\sE_M(-\Sigma_{M}))\lra
\cX=C(\pi_{M\ast}\sE_M)
$$ 
induced by the inclusion $\sE_M(-\Sigma_{M})\to\cE_M$ is an
isomorphism. 
The relative perfect obstruction theory  
$$\phi_{\cX'/M}: (L\bul_{\cX'/M})\dual \lra \R \pi_{\cX\ast}\cE_\cX(-\Sigma_{\cX}) $$ 
of $X'\to M$ constructed in \cite[Prop 2.5]{CL}  coincides with (\ref{XY}) 
via the isomorphism $\R \pi_{\cX\ast}\cE_\cX(-\Sigma_{\cX})\cong \R \pi_{\cX\ast}\cE_\cX$.
 \end{prop}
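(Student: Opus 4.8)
The plan is to prove the two assertions separately, the isomorphism of stacks first and then the compatibility of obstruction theories, using Lemma \ref{hi} as the technical input in both cases.

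For the isomorphism $X' \cong \cX$, recall from the proof of Theorem \ref{thm2.8} that for any coherent sheaf $\cF$ on $\cC_M$ flat over $M$, the stack $C(\pi_{M\ast}\cF)$ represents the functor sending $f\colon S\to M$ to the groupoid of sections $\rho_S \in \Gamma(\cC_M\times_M S, f^\ast\cF)$, equivalently $C(\pi_{M\ast}\cF) = \spec_M \Sbul R^1\pi_{M\ast}(\cF^\vee \otimes \omega_{\cC_M/M})$. Applying this to $\cF = \cE_M$ and $\cF = \cE_M(-\Sigma_M)$, the inclusion $\cE_M(-\Sigma_M) \hookrightarrow \cE_M$ induces the map $X' \to \cX$. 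To show it is an isomorphism it suffices, by cohomology and base change, to check that $\R\pi_{M\ast}(\cE_M(-\Sigma_M)) \to \R\pi_{M\ast}(\cE_M)$ is a quasi-isomorphism after arbitrary base change $S\to M$; but this is exactly Lemma \ref{hi} applied on each marked gerbe $\Sigma_{M,i}$ with $c=1$, using that $\gamma$ is narrow, i.e. that every factor $\cL_{M,j}$ restricted to $\Sigma_{M,i}$ has monodromy exponent $k_{ij}$ with $1 \le k_{ij} < r_i$ (narrowness guarantees $k_{ij} \ge 1$, and faithfulness of $\gamma_i$ gives $k_{ij}<r_i$). Since $\Sigma_M$ is a disjoint union of the $\Sigma_{M,i}$ and each summand $\cL_{M,j}$ is handled separately, the hypothesis $1\le c\le k$ of Lemma \ref{hi} holds with $c=1$, so $\R\pi_{M\ast}(\cE_M(-\Sigma_M)) \xrightarrow{\sim} \R\pi_{M\ast}(\cE_M)$. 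This identifies the two affine cones over $M$, hence $X' = \cX$.

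For the compatibility of the relative perfect obstruction theories, the point is that both $\phi_{\cX/M}$ and $\phi_{\cX'/M}$ are the tautological obstruction theories produced by the construction of \cite[Prop 2.5]{CL}, which is functorial in the chosen sheaf on the universal curve. Concretely, that construction assigns to the datum $(\pi_M, \cF)$ the obstruction theory $(L^\bullet_{C(\pi_{M\ast}\cF)/M})^\vee \to \R\pi_{\ast}\cF$ on $C(\pi_{M\ast}\cF)$ pulled back to the universal curve, and a morphism of sheaves $\cF' \to \cF$ inducing $C(\pi_{M\ast}\cF') \to C(\pi_{M\ast}\cF)$ yields a commuting square relating the two obstruction theories, compatible with $\R\pi_\ast\cF' \to \R\pi_\ast\cF$. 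Applying this to $\cE_M(-\Sigma_M) \hookrightarrow \cE_M$ and using that the induced map on cones is the identity (by the previous paragraph), the square degenerates to the assertion that $\phi_{\cX/M}$ and $\phi_{\cX'/M}$ are identified under the quasi-isomorphism $\R\pi_{\cX\ast}\cE_\cX(-\Sigma_\cX) \cong \R\pi_{\cX\ast}\cE_\cX$ of Lemma \ref{hi} (now over $\cX$ itself via the universal family pulled back along $\cX \to M$).

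The main obstacle I anticipate is bookkeeping rather than conceptual: one must make sure Lemma \ref{hi} is invoked with the correct base scheme and that cohomology-and-base-change is legitimate. The cleanest route is to verify the quasi-isomorphism universally over $M$ — it is already a statement about $\R\pi_{M\ast}$ of a map of flat families, and Lemma \ref{hi} is stated in exactly that generality — and then pull back. A secondary subtlety is checking that the construction of \cite[Prop 2.5]{CL} is genuinely natural in the coefficient sheaf, so that the two obstruction theories are \emph{literally} the same object and not merely abstractly isomorphic; this should be immediate from the cotangent-complex description of that construction, since the relevant relative cotangent complex $L^\bullet_{\cX/M}$ depends only on the stack $\cX = \cX'$ and not on which sheaf was used to present it, and the map to $\R\pi_\ast\cE_\cX(-\Sigma_\cX)\cong \R\pi_\ast\cE_\cX$ is the canonical one in both cases.
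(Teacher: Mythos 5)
Your proposal is correct and follows the route the paper intends: the paper's own proof is the one-line ``It follows from the construction,'' and the details you supply (Lemma \ref{hi} with $c=1$ at each marked gerbe, justified by narrowness, giving the quasi-isomorphism $\R\pi_{M\ast}\cE_M(-\Sigma_M)\cong\R\pi_{M\ast}\cE_M$ compatibly with base change, plus functoriality of the obstruction-theory construction of \cite[Prop 2.5]{CL} in the coefficient sheaf) are exactly the ingredients the paper uses elsewhere in \S3 when it invokes Lemma \ref{hi} and Proposition \ref{temp}. No gaps; your write-up simply makes explicit what the paper leaves implicit.
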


\begin{proof} It follows from the construction.
\end{proof}

Because of the Proposition \ref{temp}, in the following we will not distinguish $X$ and $X'$. 
We define the relative obstruction sheaf of $X/M$ to be
\beq\label{rel-ob}
\Ob_{\cX/M}=H^1(E\bul_{\cX/M})=  R^1\pi_{\cX\ast}\cE_\cX(-\Sigma_{\cX}).
\eeq
We now construct the desired cosection
\beq\label{corel}
\sigma: \Ob_{\cX/M}\lra \sO_\cX.
\eeq
Let $S$ be a connected affine scheme. Given a morphism $S\to\cX$, we denote $\Sigma_{S,i}\sub \cC\to S$, $\cL_{S,j}$
and $\rho_S=(\rho_{S,j})\in \oplus_j\Gamma(\sL_{S,j}(-\Sigma_{S}))$ to be the pullback of the universal family on $X$, 
and letting $\Sigma_S=\sum_i \Sigma_{S,i}$.

For each monomial $W_a(x)$ of $W$, we denote $W_a(x)_j=\frac{\partial}{\partial x_j}W_a(x)$. Substituting $x_j$ by $\rho_{S,j}$, (\ref{uni}) gives
$$W_a(\rho_S)_j\defeq W_a(\rho_{S,1},\cdots, \rho_{S,n})_j\in
\Gamma(\cC, \omega\ulog_{\cC/S}\otimes \cL_{S,j}\upmo).
$$
For $\dot \rho_{S,j}\in \Gamma\bl R^1\pi_{S\ast}\cL_{S,j}(-\Sigma_{S})\br$, we define
\beq\label{sigma}\sigma(\dot \rho_{S,1}, \cdots, \dot \rho_{S,n})=
\sum_{1\le a\le m}\sum_{1\le j\le n} \alpha_a W_a(\rho_S)_j\cdot\dot\rho_{S,j}
\in H^1(\cC,   \omega_{\cC/S})\cong\Gamma(\sO_S).
\eeq
Here we used $\omega_{\cC/S}^{\text{log}}(-\Sigma_S)\cong \omega_{\cC/S}$  
and the Serre duality for orbifolds.
Because of Lemma \ref{hi}, $\Ob_{\cX/M}|_S=\oplus_{j=1}^n R^1\pi_{S\ast}\cL_{S,j}(-\Sigma_{S})$.
Thus the above construction provides us the desired cosection (homomorphism) \eqref{corel}.


Now we can define the absolute obstruction sheaf $\Ob_\cX$ of $\cX$ as follows.
Because $M$ is smooth, the projection $q: \cX\to M$ provides us a distinguinshed trinagle 
\beq\label{dt1}q^\ast\LL_M\lra \LL_\cX\lra\LL_{\cX/M}\mapright{\delta} q^\ast\LL_M[1],
\eeq
where the last term is $[q^\ast\Omega_M\to 0]$ of amplitude $[-1,0]$  .
Taking the dual of $\delta$ and composing it with the obstruction homomorphism $\phi_{\cX/M}$, we obtain the homomorphism
$q\sta\Omega_M\dual\to \Ob_{\cX/M}$. We define $\Ob_\cX$ by the exact sequence
$$0\lra q\sta\Omega_M\dual  \lra \Ob_{\cX/M}\lra \Ob_{\cX}\lra 0.
$$

\begin{prop}\label {lift}
Suppose $\gamma$ is narrow. 
Then the homomorphism $\sigma: \Ob_{\cX/M}\to\sO_\cX$ constructed 
factors through a homomorphism 
$$\bar\sigma: \Ob_\cX\lra \sO_\cX.
$$
\end{prop}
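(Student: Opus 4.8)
The plan is to prove that $\sigma$ annihilates the image of the natural map $q\sta\Omega_M\dual\to\Ob_{\cX/M}$ occurring in the definition of $\Ob_\cX$; since $\Ob_\cX$ is by construction the cokernel of that map, $\sigma$ then descends to the desired $\bar\sigma\colon\Ob_\cX\to\sO_\cX$. So first I would make the map $q\sta\Omega_M\dual\to\Ob_{\cX/M}$ explicit. Dualizing the triangle \eqref{dt1} and composing with the relative obstruction theory \eqref{XY} realizes it, on $H^1$, as the obstruction map: for a connected affine $S\to\cX$ carrying the family $(\sC,\Sigma_{S,i},\cL_{S,j},\varphi_k,\rho_{S,j})$, a section $v$ of $q\sta T_M$ is a first-order deformation of the underlying $G$-spin curve $(\sC,\Sigma_{S,i},\cL_{S,j},\varphi_k)$, and its image in $\Ob_{\cX/M}|_S=\bigoplus_{j}R^1\pi_{S\ast}\cL_{S,j}(-\Sigma_S)$ is $\ob(\rho_S;v)=(\ob(\rho_{S,j};v))_{j}$, the obstruction to extending the fields $\rho_{S,j}$ over $v$ (the standard shape of the obstruction theory of the cone $C(\pi_{M\ast}\cE_M(-\Sigma_M))$ over $M$, cf.\ \cite[Prop.\ 2.5]{CL}). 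As the assertion is local on $\cX$ and compatible with base change, it suffices to treat one such $S$.

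The core is a Leibniz identity for obstruction classes. Fix a monomial $W_a$ of $W$. Via the $\varphi_k$, the isomorphism $\Phi_a$ of \eqref{uni} identifies $W_a(\cL_{S,\bullet})$ with $\omega_{\sC/S}\ulog$; since $\gamma$ is narrow the universal fields $\rho_{S,j}$ vanish along $\Sigma_S$ (Lemma \ref{hi}, Prop.\ \ref{temp}) and $\deg W_a\ge 2$, so $\Phi_a$ carries the composite section $W_a(\rho_{S,1},\dots,\rho_{S,n})$ to a section $\Phi_a(W_a(\rho_S))\in\Gamma(\sC,\omega_{\sC/S}\ulog(-2\Sigma_S))$. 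The obstruction-to-extension assignment is a derivation on tensor products of sections, and a deformation $v$ tangent to $M$ deforms the $\cL_{S,j}$ compatibly with the $\varphi_k$, so the induced deformation of $W_a(\cL_{S,\bullet})$ agrees, through $\Phi_a$, with that of $\omega_{\sC/S}\ulog$; hence $\ob(\Phi_a(W_a(\rho_S));v)=\sum_{j=1}^n W_a(\rho_S)_j\cdot\ob(\rho_{S,j};v)$. Multiplying by $\alpha_a$, summing over $a$, and comparing with \eqref{sigma} yields
\[
\sigma\bigl(\ob(\rho_S;v)\bigr)=\ob\bigl(W(\rho_S);v\bigr),\qquad W(\rho_S):=\sum_{a=1}^m\alpha_a\,\Phi_a(W_a(\rho_S)).
\]

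Finally, this obstruction vanishes. The point is that $W(\rho_S)$ is the pullback of the global section $W(\rho_\cX):=\sum_a\alpha_a\,\Phi_a(W_a(\rho_{\cX,1},\dots,\rho_{\cX,n}))\in\Gamma(\cC_\cX,\omega_{\cC_\cX/\cX}\ulog(-2\Sigma_\cX))$ on the universal curve, built from the universal family \eqref{uni1} and the universal isomorphisms \eqref{uni}, and a section that extends over the whole universal curve has trivial obstruction to extension along any first-order deformation. Hence $\ob(W(\rho_S);v)=0$, so $\sigma$ kills the image of $q\sta\Omega_M\dual\to\Ob_{\cX/M}$ and therefore factors as $\bar\sigma\colon\Ob_\cX\to\sO_\cX$.

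I expect the delicate step to be the Leibniz identity: one must verify that the obstruction map is a genuine derivation and, more importantly, that the two sides involve matching ambient deformations — the deformation of $\omega_{\sC/S}\ulog$ used on the left and those of the $\cL_{S,j}$ used on the right are forced to agree precisely because $v$ respects the isomorphisms $\varphi_k$ (i.e.\ is tangent to $M=\barM_{g,\gamma}(G)$, not merely to the moduli of curves with $n$ line bundles), while narrowness is what makes $W_a(\rho_S)$ vanish along $\Sigma_S$ to the order needed for everything to live in $\omega\ulog(-2\Sigma_S)$ and for the cosection \eqref{sigma} to make sense. These are exactly the hypotheses of the proposition.
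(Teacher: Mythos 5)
Your overall strategy --- show that $\sigma$ kills the image of $q\sta\Omega_M\dual\to\Ob_{\cX/M}$ by a chain-rule identity $\sigma(\ob(\rho_S;v))=\ob(W(\rho_S);v)$ and then prove that this last obstruction vanishes --- is exactly the paper's strategy (Lemma \ref{tri}); your Leibniz identity is what the commutative square \eqref{comm} of evaluation maps and its induced diagram \eqref{comm2} of cotangent complexes make precise. The gap is in your final step. The principle you invoke, ``a section that extends over the whole universal curve has trivial obstruction to extension along any first-order deformation,'' is false, and if it were true it would collapse the entire setup: each field $\rho_{S,j}$ is itself the restriction of the universal section $\rho_{\cX,j}\in\Gamma(\cC_\cX,\cL_{\cX,j})$, so the same principle would give $\ob(\rho_{S,j};v)=0$ for every $v$, i.e.\ the map $q\sta\Omega_M\dual\to\Ob_{\cX/M}$ would vanish identically and the proposition would be vacuous. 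The point is that $v$ is a tangent direction of $M$ which in general does \emph{not} lift to $\cX$; the curve deformation it induces is pulled back from $\cC_M\to M$, not from $\cC_\cX\to\cX$, and $W(\rho_\cX)$ lives only on $\cC_\cX$, so its existence provides no extension of $W(\rho_S)$ over the deformed curve.

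What actually forces $\ob(W(\rho_S);v)=0$ is special to the dualizing sheaf: $\pi_{M\ast}\omega_{\cC_M/M}$ is locally free of rank $g$ and commutes with base change, so every section of $\omega_{\cC_S/S}$ extends over every first-order deformation of the twisted curve. Equivalently, $W(\rho_\cX)$ defines a morphism $\fg:\cX\to {B}=C(\pi_{M\ast}\omega_{\cC_M/M})$ over $M$, and ${B}\to M$ is smooth; this is precisely the vanishing \eqref{3.11} that the paper uses to conclude Lemma \ref{tri}, and hence Proposition \ref{lift}. Replacing your ``pullback from the universal curve'' argument by this input repairs the proof and makes it essentially the paper's. (A minor side remark: you only need $W_a(\rho_S)$ to vanish to first order along $\Sigma_S$, so that it lies in $\omega_{\cC_S/S}\ulog(-\Sigma_S)\cong\omega_{\cC_S/S}$; the order-two vanishing and the appeal to $\deg W_a\ge 2$ are unnecessary.)
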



\subsection{The Proof of factorization} In this subsection, we will give a proof of the Proposition \ref{lift}. 
The proof is similar to \cite[Prop 3.5]{CL}. We first provide an equivalent construction of the cosection 
using evaluation maps.

Let $\cE_M=\oplus \cL_{M,j}$, and let
${Z}_M=\Vb(\cE_M(-\Sigma_M))$, which is the total space of the vector bundle $\cE_M(-\Sigma_M)$.
Since all $\gamma_i$'s are narrow, by Proposition \ref{temp}, we have 
$\cX= C(\pi_{M\ast}(\cE_M(-\Sigma_M)))$.
Using the universal section $\rho_\cX=(\rho_{\cX,1},\cdots\rho_{\cX,n})$ of $X$, 
and denoting $\cC_X=\cC_M\times_M X$ as the universal curve over $X$, we obtain
the evaluation (evaluating $\rho_X$) $M$-morphism 
\beq\label{ey}\ee:\cC_\cX\lra  {Z}_M.
\eeq

We form the total space of the respective vector bundles 
$\Vb(\omega_{\cC_M/M})$ and ${Z}_{i}=\Vb(\cL_{M,i}(-\Sigma_M))$.
Then the isomorphism $\Phi_a: W_a(\cL_{M,\cdot})\to \omega_{\cC_M/M}\ulog$
(cf. \eqref{uni})
and the polynomial $W=\sum \alpha_a W_a$ define an $M$-morphism
 \beq\label{morp}
h: {Z}_M={Z}_1\times_M \cdots\times_M {Z}_n\lra
\Vb(\omega_{\cC_M/M}),
\eeq
where for $\xi\in \cC_M$ and $z=(z_j)_{j=1}^n\in {Z}_M|_\xi$, 
$h(z)=\sum \alpha_a W_a(z)\in \Vb(\omega_{\cC_M/M})|_\xi$.
(Here we have used the tautological inclusion 
$\omega\ulog_{\cC_M/M}(-\Sigma_M)\to \omega_{\cC_M/M}$.)


The morphism $h$ induces a homomorphism of cotangent complexes
$$
dh: (L\bul_{{Z}_M/\cC_M})\dual
\mapright{} h^\ast 
(L\bul_{\Vb(\omega_{\cC_M/M})/\cC_M})\dual =h^\ast 
\Omega\dual_{\Vb(\omega_{\cC_M/M})/\cC_M},
$$
where $dh$ is the relative differentiation of $h$ relative to $\cC_M$.
In explicit form,  for $z=(z_j)\in {Z}_M|_\xi$ over $\xi\in \cC_{M}$,
\beq\label{dh}dh|_{z}(\dot z)=  \sum_{1\le a\le m} \sum_{1\le j\le n} \alpha_a W_a(z)_j\cdot\dot z_j,
\eeq
for $\dot z=(\dot z_j)_{j=1}^n\in 
\Omega\dual_{{Z}_M/\cC_M}\big|_z=\oplus_{j=1}^n \cL_{M,j}(-\Sigma_M)|_\xi$.

On the other hand, pulling back $d{h}$ to $\cC_{\cX}$ via 
the evaluation morphism $\ee$, we have
$$\ee^\ast (dh):\ee^\ast \Omega\dual_{{Z}_M/\cC_M}\lra 
\ee^\ast h^\ast\Omega\dual_{\Vb(\omega_{\cC_M/M})/\cC_M}.
$$
Because the right hand side is canonically isomorphic to  $\omega_{\cC_{\cX}/{\cX}}$,
applying $\R \pi_{{\cX}\ast}$, we obtain
\beq\label{map}
\begin{CD}
\sigma^\bullet:\R \pi_{{\cX}\ast}    \ee^\ast \Omega\dual_{{Z}_M/\cC_M}\lra \R \pi_{{\cX}\ast}(\ee^\ast
h^\ast \Omega\dual_{\Vb(\omega_{\cC_M/M})/\cC_M})\cong \R \pi_{{\cX}\ast}\omega_{\cC_{\cX}/{\cX}}.
\end{CD}
\eeq
  
By Proposition \ref{temp}, we obtain the canonical isomorphism 
$$E\bul_{\cX{}/M}\cong \R \pi_{\cX\ast}\cE_\cX(-\Sigma_{\cX})=\R \pi_{{\cX}\ast}    \ee^\ast \Omega\dual_{{Z}_M/\cC_M}.
$$
Hence (\ref{map}) gives 
$$\sigma^\bullet:E\bul_{\cX{}/M}\lra \R \pi_{{\cX}\ast}\omega_{\cC_{\cX}/{\cX}}.
$$

It is routine to check that $H^1(\si\bul)$ coincide with the $\sigma$ constructed in \eqref{corel}:
\beq\label{si-cosection}
\sigma=H^1(\sigma^\bullet):\Ob_{{\cX}/M}=H^1(E\bul_{{\cX}/M})\lra R^1\pi_{{\cX}\ast}(\omega_{\cC_{\cX}/{\cX}})\cong \sO_{\cX}.
\eeq

\medskip
We now prove the factorization using this interpretation of $\sigma$.
We let ${B}=C(\pi_{M\ast}\omega_{\cC_M/M})$, which by definition is
the total space of the bundle $\pi_{M\ast}\omega_{\cC_M/M}$ over $M$.  
Let $\cC_{{B}}=\cC_M\times_{M}{B}$ be the pullback of the universal curve;
let $\pi_{{B}} :\cC_{{B}}\to{B}$ be the projection. Then the universal section of ${B}$ over $\cC_{B}$ induces an evaluation morphism $\ff:\cC_{{B}}\to\Vb(\omega_{\cC_M/M})$ as (\ref{ey}).

 \begin{lemm}\label{tri}  
The following composition is trivial.
$$H^1(\sigma^{\bullet})\circ H^1( \phi_{\cX/M})=0:H^1((L\bul_{\cX{}/M})\dual )\lra H^1(E\bul_{\cX{}/M})\lra  \sO_{\cX}.
$$
\end{lemm}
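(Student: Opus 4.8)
The plan is to prove Lemma \ref{tri} by tracing through the geometry that produced both homomorphisms and exhibiting the composition as the derivative of a morphism which is constant along the fibers of $\cX\to M$. First I would recall that $\phi_{\cX/M}$ is the tautological obstruction theory of $\cX=C(\pi_{M\ast}\sE_M(-\Sigma_M))$ over $M$, which at the level of complexes is the pullback to $\cX$ of the relative differential of the identity morphism on the total space $Z_M=\Vb(\cE_M(-\Sigma_M))$ over $\cC_M$. Concretely, $H^1(\phi_{\cX/M})$ at a point $\xi=(C,\Sigma_i,L_j,\varphi_k,\rho_j)$ sends an infinitesimal deformation of $(C,\Sigma_i,L_j)$ to the obstruction to deforming the section $\rho=(\rho_j)$ accordingly; this is precisely the connecting/Kodaira–Spencer map for the evaluation morphism $\ee:\cC_\cX\to Z_M$.

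The key point is that the composite $\sigma^\bullet\circ\phi_{\cX/M}$ is, by construction, the obstruction-theory image of the derivative of the morphism $h\circ\ee:\cC_\cX\to \Vb(\omega_{\cC_M/M})$ \emph{taken with respect to the base directions of} $\cX/M$ (equivalently, $M$-directions on $\cC_M$), whereas $H^1(\sigma^\bullet)$ itself involves the fiber derivative $dh$ in the section directions. I would factor $h\circ\ee$ through $\ff:\cC_{B}\to \Vb(\omega_{\cC_M/M})$, where $B=C(\pi_{M\ast}\omega_{\cC_M/M})$: the composed datum $(\rho_j)\mapsto \sum_a\alpha_a W_a(\rho_j)$ defines an $M$-morphism $\cX\to B$ (sending a point of $\cX$ to the section $\sum\alpha_a W_a(\rho)\in\Gamma(\omega_{\cC/S})$, using $\omega^{\log}(-\Sigma)\cong\omega$), so that $h\circ\ee$ equals $\ff$ pulled back along $\cC_\cX\to\cC_{B}$. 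The upshot is a commuting square of cotangent complexes: the relative obstruction class $H^1((L\bul_{\cX/M})\dual)\to H^1(E\bul_{\cX/M})\to \sO_\cX$ is the $\cX/M$-relative part of an absolute class that is pulled back from $B$ via $\cX\to B$, and therefore its restriction to the relative cotangent directions — which are precisely the directions contracted in forming the relative obstruction theory $\phi_{\cX/M}$ — vanishes. Said differently, $\sigma^\bullet$ is defined using only the $\cC_M$-relative differential of $h$, which kills any tangent vector that comes from varying $(C,\Sigma_i,L_j)$ together with $\rho$ in the way dictated by $\phi_{\cX/M}$; chasing the definitions \eqref{dh}, \eqref{map}, \eqref{si-cosection} against the construction of $\phi_{\cX/M}$ in \cite[Prop 2.5]{CL} makes the composite literally zero.

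Therefore the plan has three steps: (1) identify $H^1(\phi_{\cX/M})$ explicitly as the Kodaira–Spencer map of $\ee:\cC_\cX\to Z_M$ relative to $M$, using the description $\cX=C(\pi_{M\ast}\sE_M(-\Sigma_M))$; (2) observe that $\sigma^\bullet=\R\pi_{\cX\ast}\ee^\ast(dh)$ uses only $d h$ relative to $\cC_M$, and that composing an evaluation-type Kodaira–Spencer map with a relative differential of the target equals the Kodaira–Spencer map of the composite $h\circ\ee$ relative to $\cC_M$; (3) factor $h\circ\ee$ through $\ff:\cC_{B}\to\Vb(\omega_{\cC_M/M})$ via the $M$-morphism $\cX\to B$, $\xi\mapsto(\ldots,\sum_a\alpha_a W_a(\rho),\ldots)$, and conclude the composite factors through the obstruction of $B/M$, hence is zero on the relative cotangent directions $H^1((L\bul_{\cX/M})\dual)$ because $B\to M$ is smooth (its obstruction sheaf vanishes) — or more directly, because the relevant arrow already factors through $H^1((L\bul_{B/M})\dual)=0$. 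The main obstacle I anticipate is step (2): making precise, at the level of derived categories of the orbifold curves and compatibly with the isomorphisms $\omega^{\log}(-\Sigma)\cong\omega$ and Serre duality, that ``pull back $dh$ along $\ee$ then take $\R\pi_\ast$'' composes with the relative obstruction theory to give exactly the $\cC_M$-relative Kodaira–Spencer class of $h\circ\ee$; this is the bookkeeping that mirrors \cite[Prop 3.5]{CL}, and once it is in place the factorization through $B$ and the vanishing are formal.
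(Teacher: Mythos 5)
Your proposal is correct and takes essentially the same route as the paper: the paper also defines the $M$-morphism $\fg:\cX\to B=C(\pi_{M\ast}\omega_{\cC_M/M})$ via $W(\rho_\cX)=\sum_a\alpha_a W_a(\rho_\cX)$, forms the commutative square $h\circ\ee=\ff\circ\ti\fg$, passes to the induced diagram of cotangent complexes, and concludes from the smoothness of $B\to M$ (so $H^1(\fg\sta\phi_{B/M})=0$) after applying $R^1\pi_{\cX\ast}$. The ``bookkeeping'' you flag in step (2) is exactly what the paper's diagram \eqref{comm2} records, so your plan matches the published argument.
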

 
\begin{proof} 
By Lemma \ref{hi},  the universal sections $\rho_{\cX,j}$ lie in $\Gamma(\cC_\cX,\cL_{\cX,j}(-\Sigma_{\cX}))$.  
Using (\ref{uni}),  we have
$$W_a(\rho_\cX)\defeq W_a(\rho_{\cX,1},\cdots,\rho_{\cX,n})\in \Gamma(\cC_\cX,\omega\ulog_{\cC_\cX/\cX}(-\Sigma_\cX))=
\Gamma(\cC_\cX,\omega_{\cC_\cX/\cX}).
$$
We define
$$W(\rho_\cX)=\sum \alpha_a W_a(\rho_\cX)\in \Gamma(\cC_\cX,\omega_{\cC_\cX/\cX}).
$$

The section $W(\rho_\cX)$ defines a morphism $\fg :\cX\to {B}$ so that $W(\rho_\cX)$ is the pullback of the
universal section of ${B}$ over $\cC_{B}$.
We let $\ti \fg: \cC_{\cX}\to \cC_{{B}}$ be the tautological lift of $\fg$ using
$\cC_\cX\cong \cC_M\times_M\cX$ and $\cC_{{B}}\cong \cC_M\times_M{B}$,
which fits into the following commutative square of morphisms of stacks over $\cC_M$:
\beq\label{comm}
\begin{CD}
\cC_{\cX}@>\ee>> {Z}_{M}\\
@VV{\ti \fg}V @VV{ h}V\\
\cC_{{B}}@>{\ff}>>\Vb(\omega_{\cC_M/M}),
\end{CD}
\eeq
 which in turn gives the following commutative diagrams of cotangent complexes:
\beq\label{comm2}
\begin{CD}
\pi_{\cX}\sta (L\bul_{{\cX}/{M}})\dual @= (L\bul_{\cC_{\cX}/\cC_M})\dual@>{}>> 
{\ee}^\ast\Omega\dual_{{Z}_M /\cC_M}\\
@VVV @VV{}V @VV{dh}V\\
\pi_{\cX}\sta  \fg\sta  (L\bul_{{{B}}/{M}})\dual@= \ti \fg\sta  (L\bul_{\cC_{{B}}/\cC_M})\dual@>{}>>
\ti \fg\sta \ff\sta\Omega\dual_{\Vb(\omega_{\cC_M/M})/\cC_M},
\end{CD}
\eeq
where $\pi_{\cX}\sta  \fg\sta  (L\bul_{{{B}}/{M}})\dual= \ti \fg\sta\pi_{{B}}\sta (L\bul_{{B}/M})\dual=\ti \fg\sta  (L\bul_{\cC_{{B}}/\cC_M})\dual$
follows from the fiber diagrams
\beq\label{huge}
 \begin{CD}
 \cC_\cX@>\ti \fg>> \cC_{B}@>>>\cC_M\\
 @VV{\pi_\cX}V @VV{\pi_{{B}}}V @VVV\\
 \cX@>{\fg}>> {B}@>>>M.\\ 
 \end{CD}
\eeq

Let $\phi_{{B}/M}: (L\bul_{{B}/{M}})\dual\to \Rpi_{{B}\ast} 
\omega_{\cC_{{B}}/{B}}$
be the standard obstruction theory of ${B}\to M$ (cf. \cite{BF, CL}).
Then $\fg\sta\phi_{{B}/M}$ is the obstruction theory of ${B}\times_M\cX\to \cX$. The smoothness of ${B}\to M$ implies
\beq\label{3.11}
0=\ H^1(\fg\sta\phi_{{B}/M}): H^1(\fg\sta  (L\bul_{{B}/{M}})\dual )
\lra \fg\sta R^1\pi_{{B}\ast} \omega_{\cC_{{B}}/{B}}.
\eeq
 
Finally, applying $R^1\pi_{\cX\ast}$ to  \eqref{comm2},  we see that
the composite
\begin{eqnarray}\label{exact1}
&&H^1((L\bul_{\cX/M})\dual)\mapright{} R^1\pi_{\cX\ast}\ee^\ast\Omega\dual_{{Z}_{M}/\cC_M}
\lra R^1\pi_{\cX\ast}\ee^\ast  h^\ast \Omega\dual_{\Vb(\omega_{\cC_M/M})/\cC_M}
\end{eqnarray}
coincides with the composite
\begin{eqnarray}\label{exact2}
&&H^1((L\bul_{\cX/M})\dual )\lra H^1(\fg\sta (L\bul_{{B}/M})\dual )\mapright{0}
\fg\sta R^1\pi_{{B}\ast} \ff\sta \Omega\dual_{\Vb(\omega_{\cC_M/M})/\cC_M}.
\end{eqnarray} 
Using \eqref{3.11}, the composite in (\ref{exact2}) is trivial, thus the composite in (\ref{exact1}) is trivial.
Because
$\ee^\ast  h^\ast \Omega\dual_{\Vb(\omega_{\cC_M/M})/\cC_M}= \omega_{\cC_{\cX}/\cX}$,
we prove the desired vanishing.
\end{proof}

\begin{proof}[Proof of Proposition \ref{lift}]
By (\ref{si-cosection}), $\sigma=H^1(\sigma^\bullet)$. Hence the composition of $\sigma $ 
with $q\sta\Omega_M\dual\to \Ob_{\cX/M}$ (defined below (\ref{dt1})) is the $H^1$ of the composition 
$$q\sta(\LL_{M})\dual[-1]\lra (\LL_{\cX/M})\dual\mapright{\phi_{\cX/M}}E\bul_{\cX/M}\mapright{\sigma^\bullet} \Rpi_{\cX\ast}\omega_{\cC_\cX/\cX},
$$
where the first arrow is the $\delta\dual$ in \eqref{dt1}. Lemma \ref{tri} implies the $H^1$ of the above composition is trivial.
This proves the Proposition.
\end{proof}

\subsection{Degeneracy loci of the cosection}

We investigate the loci of non-surjectivity of the cosection $\sigma$.
Let $\xi=(\Sigma^\sC_i\sub \sC,\sL_j, \rho_j)$ be a closed point in $\cX=\barM_{g,\gamma}(G)^p$. 

\begin{lemm}\label{Dege}
Let the notation be as stated. Then $\sigma|_\xi=0$ if and only if all $\rho_j=0$. Thus the (reduced part of) degeneracy loci of $\sigma$ is $M\sub\cX$, which is proper.
\end{lemm}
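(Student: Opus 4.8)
The plan is to analyze the cosection $\sigma|_\xi$ fiberwise over a closed point $\xi = (\Sigma_i^\sC \subset \sC, \sL_j, \rho_j)$ and show it is the zero functional on $\Ob_{\cX/M}|_\xi = \bigoplus_j H^1(\sC, \sL_j(-\Sigma_\sC))$ exactly when all $\rho_j = 0$. One direction is immediate: if every $\rho_j = 0$, then each partial derivative $W_a(\rho)_j = W_a(\rho_1,\dots,\rho_n)_j$ vanishes (since every monomial $W_a$ of the nondegenerate $W$ involves at least two variables — no linear terms because the only critical point is the origin, and no pure $x_i$ terms of degree $\geq 2$ would give a nonzero derivative at $0$ either; more simply, $W$ has a critical point at the origin so $\partial_j W(0) = 0$, and in fact each $W_a(\rho)_j$ is a monomial in the $\rho_i$'s of positive degree), so the defining formula \eqref{sigma} for $\sigma$ is identically zero.

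The substantive direction is the converse: assume some $\rho_{j_0} \neq 0$ and produce a class $\dot\rho \in \bigoplus_j H^1(\sC, \sL_j(-\Sigma_\sC))$ on which $\sigma|_\xi$ does not vanish. The key point is that $\sigma|_\xi$, as a functional on $\bigoplus_j H^1(\sC, \sL_j(-\Sigma_\sC))$, is Serre-dual to the map on global sections given by cup product with the section $(\alpha_a W_a(\rho)_j)_{a,j}$; concretely, its $j$-th component is the map $H^1(\sC, \sL_j(-\Sigma_\sC)) \to H^1(\sC, \omega_\sC) \cong \CC$ induced by the section $\beta_j := \sum_a \alpha_a W_a(\rho)_j \in \Gamma(\sC, \omega_\sC^{\log} \otimes \sL_j^{-1})$. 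By Serre duality this component is nonzero iff the multiplication map $H^0(\sC, \sL_j) \xrightarrow{\cdot \beta_j} H^0(\sC, \omega_\sC^{\log})$ — or rather the appropriate twisted version involving $\sL_j(-\Sigma_\sC)$ — is nonzero, which holds iff $\beta_j \neq 0$ as a section. So I must show: if some $\rho_{j_0} \neq 0$ then some $\beta_j \neq 0$. I would argue this by combining two inputs: (i) the Euler-type relation coming from quasi-homogeneity, $\sum_j \delta_j \rho_j \cdot (\partial_j W)(\rho) = d \cdot W(\rho)$ (interpreted via the isomorphisms $\Phi_a$ landing in $\omega_\sC^{\log}$), so if all $\beta_j = 0$ then $W(\rho) = 0$ as a section of $\omega_\sC^{\log}$; and (ii) nondegeneracy of $W$ — its only critical point is the origin — which forces the simultaneous vanishing of all $\partial_j W$ together with $W$ to imply that $\rho$ takes values in the critical locus $\{0\}$, i.e. all $\rho_j \equiv 0$, contradicting $\rho_{j_0} \neq 0$. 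Here one must be careful that $\rho_j$ is a section of a line bundle on an orbifold curve, not a function, so "taking values in the origin" should be phrased scheme-theoretically: the section $(\rho_1,\dots,\rho_n)$ of $\bigoplus \sL_j$ defines a map $\sC \to [\CC^n/\cdots]$ landing in the critical locus of $W$, which is the reduced point $0$, forcing each $\rho_j$ to be the zero section.

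I expect the main obstacle to be the careful bookkeeping around twists and Serre duality on twisted (orbifold) curves: establishing precisely that $\sigma|_\xi$ vanishes iff every $\beta_j \in \Gamma(\sC, \omega_\sC^{\log}(-\Sigma_\sC) \otimes \sL_j^{-1}) = \Gamma(\sC, \omega_\sC \otimes \sL_j^{-1})$ vanishes, using Lemma \ref{hi} to identify $\Ob_{\cX/M}|_\xi = \bigoplus_j H^1(\sC, \sL_j(-\Sigma_\sC))$ and the orbifold Serre duality pairing $H^1(\sC, \sL_j(-\Sigma_\sC)) \times H^0(\sC, \omega_\sC \otimes \sL_j^{-1}(\Sigma_\sC)) \to \CC$ — and noting that the pairing with a \emph{fixed} nonzero section $\beta_j$ of $\omega_\sC \otimes \sL_j^{-1}$ is a nonzero functional because $\omega_\sC \otimes \sL_j^{-1}$ has a nonzero section, hence $H^1(\sC, \sL_j(-\Sigma_\sC)) = H^0(\sC, \omega_\sC \otimes \sL_j^{-1}(\Sigma_\sC))^\vee \neq 0$ and the evaluation against $\beta_j$ is surjective onto $\CC$. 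Once that linear-algebra-with-duality dictionary is in place, the geometric heart — quasi-homogeneity plus nondegeneracy forcing $\rho = 0$ — is short. The final sentence of the lemma, that the reduced degeneracy locus is $M \subset \cX$ and is proper, then follows immediately: $M$ is the zero section of the linear (affine bundle) morphism \eqref{11}, and $M = \barM_{g,\gamma}(G)$ is a smooth proper DM stack as recorded after Definition \ref{gamma}.
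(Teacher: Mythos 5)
Your proposal is correct and follows essentially the same route as the paper: identify $\sigma|_\xi$ via Serre duality with the sections $\beta_j=\sum_a\alpha_a W_a(\rho)_j$, conclude that $\sigma|_\xi=0$ forces all $\beta_j=0$, and then use that the origin is the unique critical point of $W$ to force $\rho=0$, with properness of the degeneracy locus coming from $M=\barM_{g,\gamma}(G)$ being proper. The only difference is your extra Euler-relation step producing $W(\rho)=0$, which is harmless but unnecessary, since the critical locus is already cut out by the vanishing of the partial derivatives alone.
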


\begin{proof} $\sigma|_\xi=0$ implies that for arbitrary $\dot \rho_{1}, \cdots, \dot \rho_{n}\in H^1(C, L_j)$,
the term in (\ref{sigma}) 
\begin{eqnarray*}\sigma(\dot \rho_{1}, \cdots, \dot \rho_{n})=
\sum_{1\le a\le m}\sum_{1\le j\le n} \alpha_a W_a(\rho)_j\cdot\dot\rho_{j}
\in\Gamma(C,\omega_{C})\cong\CC
\end{eqnarray*}
vanishes. By Serre duality 
this forces $\sum_{1\le a\le m} \alpha_a W_a(\rho)_j=0$
for every $j$. Since $0\in \CC^n$ is the only critical point of $W$, this forces $(\rho_1,\cdots,\rho_n)=0$.
\end{proof}

\subsection{Localized virtual cycle}
We recall the notion of kernel-stack of a cosection. Let $E=[E^0\to E^1]$ be a
two term complex of locally free sheaves on a Deligne-Mumford stack $X$; let
$f: H^1(E)\to \sO_X$ be a cosection of $H^1(E)$. We define $D(f)$ to be the
subset of $x\in X$ such that $f|_x=0: H^1(E)|_x\to\CC$; $D(f)$ is closed. Let $U=X-D(f)$. 

\begin{defi}\label{cone-stack-def}
Let the notation be as stated. We define the kernel stack to be 
$$h^1/h^0(E)_f\defeq \bl h^1/h^0(E)\times_X D(f)\br \cup \ker\{h^1/h^0(E)|_U\to H^1(E)|_U\to \CC_U\}.
$$
\end{defi}

Here $h^1/h^0(E)|_U\to H^1(E)|_U$ is the tautological projection and $H^1(E)|_U\to \CC_U$ is
$f|_U$. Since $f$ is surjective over $U$, the composite in the bracket is surjective, thus the kernel is
a bundle-stack over $U$. Clearly, the union is closed in $h^1/h^0(E)$; we endow it the reduced structure,
making it, denote by $h^1/h^0(E)_f$, a closed substack of $h^1/h^0(E)$.
We call it the kernel-stack of $f$
 
We apply the theory developed in \cite{KL} to $\cX/M$ for $\cX=\barM_{g,\gamma}(G)^p$ and $M= \barM_{g,\gamma}(G)$.
As $\sigma$ is a cosection of $H^1(E^\bullet_{\cX/M})$, 
we form its kernel-stack
\beq\label{cone-stack}h^1/h^0(E^\bullet_{\cX/M})_{\sigma}\sub h^1/h^0(E^\bullet_{\cX/M}).
\eeq

\begin{prop}\label{loc-vir-sig1} The virtual normal cone cycle 
$$[\bC_{\cX/M}]\in Z\lsta (h^1/h^0(E^\bullet_{\cX/M}))
$$
lies inside $Z\lsta (h^1/h^0(E^\bullet_{\cX/M})_{\sigma})$. 
\end{prop}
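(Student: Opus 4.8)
The assertion is local on $\cX$ (being contained in a closed substack), and it follows from the factorization of $\sigma$ set up in \S3.3 together with the smoothness of $B/M$. Recall that $B=C(\pi_{M\ast}\omega_{\cC_M/M})$ is the total space of the \emph{locally free} sheaf $\pi_{M\ast}\omega_{\cC_M/M}$ on $M$, so $B\to M$ is smooth; that $\fg:\cX\to B$ is the morphism defined by $W(\rho_\cX)\in\Gamma(\cC_\cX,\omega_{\cC_\cX/\cX})$; and that by \eqref{comm2}, \eqref{si-cosection} one has $\sigma=H^1(\sigma^\bullet)$ with $\sigma^\bullet:E^\bullet_{\cX/M}\to\fg^\ast E^\bullet_{B/M}$ (where $E^\bullet_{B/M}=\Rpi_{B\ast}\omega_{\cC_B/B}$), and the diagrams \eqref{comm}--\eqref{huge} exhibit $\sigma^\bullet$ as genuinely induced by the $M$-morphism $\fg$ and its lift $\ti\fg$.

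First I would record the cone-level form of \eqref{3.11}: since $B\to M$ is smooth one has $\mathfrak C_{B/M}=\mathfrak N_{B/M}$, so the virtual normal cone $[\bC_{B/M}]\in Z\lsta(h^1/h^0(E^\bullet_{B/M}))$ is the cycle of the zero-section of $h^1/h^0(E^\bullet_{B/M})$; in particular it maps to $0$ under the tautological projection $h^1/h^0(E^\bullet_{B/M})\to H^1(E^\bullet_{B/M})\cong\sO_B$. Next, by functoriality of intrinsic normal cones along $\fg$ for the compatible obstruction-theory morphism $\sigma^\bullet$ over $M$, the induced map of cone stacks $h^1/h^0(\sigma^\bullet):h^1/h^0(E^\bullet_{\cX/M})\to\fg^\ast h^1/h^0(E^\bullet_{B/M})$ carries $[\bC_{\cX/M}]$ into $\fg^\ast[\bC_{B/M}]$. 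Composing with $\fg^\ast h^1/h^0(E^\bullet_{B/M})\to\fg^\ast H^1(E^\bullet_{B/M})=\sO_\cX$ and using $\sigma=H^1(\sigma^\bullet)$, this composite is exactly $h^1/h^0(E^\bullet_{\cX/M})\to H^1(E^\bullet_{\cX/M})\xrightarrow{\sigma}\sO_\cX$; hence over $U=\cX\setminus D(\sigma)$ the cycle $[\bC_{\cX/M}]$ lies in $\ker\{h^1/h^0(E^\bullet_{\cX/M})|_U\to H^1(E^\bullet_{\cX/M})|_U\to\CC_U\}$. Over $D(\sigma)=M$ (Lemma \ref{Dege}) there is nothing to check, since there $h^1/h^0(E^\bullet_{\cX/M})_\sigma$ is all of $h^1/h^0(E^\bullet_{\cX/M})$. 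By Definition \ref{cone-stack-def} this says precisely that $[\bC_{\cX/M}]\in Z\lsta(h^1/h^0(E^\bullet_{\cX/M})_\sigma)$.

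The step needing care --- and the main obstacle --- is the functoriality of normal cones \emph{at the level of cone stacks} rather than merely of virtual classes: one must know that $\sigma^\bullet$ underlies a morphism of obstruction theories over $M$ compatible with the cotangent triangle of $\cX\xrightarrow{\fg}B\to M$, so that $\mathfrak C_{\cX/M}$ really maps into $\mathfrak C_{B/M}$ under $h^1/h^0(\sigma^\bullet)$. I would verify this by the same bootstrap as Lemma \ref{tri}: \'etale-locally present $\cX=s^{-1}(0)\subset V$ with $V/M$ smooth and $(E^\bullet_{\cX/M})\dual\cong[T_{V/M}|_\cX\xrightarrow{ds}F|_\cX]$, so that $[\bC_{\cX/M}]=[C_{\cX/V}/T_{V/M}]$ with $C_{\cX/V}\hookrightarrow F|_\cX$ via $s$, and it suffices that every component of $C_{\cX/V}$ is carried into $\ker(\bar\sigma:F|_\cX\to\sO_\cX)$, where $\bar\sigma$ is $\sigma$ precomposed with $F|_\cX\twoheadrightarrow\Ob_{\cX/M}$. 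Over the locus where $\cX/M$ is smooth this is immediate, since there $C_{\cX/V}=ds(T_{V/M})|_\cX$, which $\bar\sigma$ kills by construction; for the remaining components (they lie over the non-smooth locus of $\cX/M$, and those over $M$ are unconstrained as $M=D(\sigma)$) I would run the valuative criterion for the normal cone: a cone point is a leading $t$-coefficient of $s$ along an arc $\Spec\CC[\![t]\!]\to V$, and applying $\bar\sigma$ returns the corresponding leading coefficient of $s_B$ pulled back along $\ti\fg$ composed with that arc; since $B=s_B^{-1}(0)$ is smooth over $M$ this leading coefficient lies in the image of $ds_B$ --- which is the vanishing \eqref{3.11}, now read off arcs of arbitrary order instead of just first order. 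This is the algebro-geometric counterpart of the Kiem--Li argument that the virtual normal cone lies in the kernel cone-stack of a cosection; one may alternatively simply quote \cite{KL} for the general statement and feed in our $\sigma$.
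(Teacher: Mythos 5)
Your argument is correct in substance but follows a genuinely different route from the paper's. The paper's own proof is a two-line reduction: by Proposition \ref{lift} the cosection $\sigma$ descends to the absolute obstruction sheaf $\Ob_\cX$, and since the relative cone $\bC_{\cX/M}$ maps smoothly onto the absolute intrinsic normal cone $\bC_\cX$, the statement reduces to the absolute case, which is quoted from \cite[Prop.\ 4.3]{KL}. You instead stay entirely at the relative level and exploit the same geometric input that underlies Lemma \ref{tri} --- the factorization of $\sigma^\bullet$ through $\fg\sta E^\bullet_{B/M}$ with $B/M$ smooth --- but use it at the level of complexes and cone stacks rather than only its $H^1$-shadow \eqref{3.11}: once the square formed by $\phi_{\cX/M}$, $\fg\sta\phi_{B/M}$, $\sigma^\bullet$ and the canonical map $(\LL_{\cX/M})\dual\to\fg\sta(\LL_{B/M})\dual$ commutes in the derived category (and this is exactly what applying $\R\pi_{\cX\ast}$ to \eqref{comm2} gives, before taking $H^1$), the cone, sitting inside the image of the intrinsic normal sheaf, is forced into the zero-section part of $\fg\sta h^1/h^0(E^\bullet_{B/M})$, hence into the kernel stack of $\sigma$. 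What each approach buys: the paper's is shorter, reuses Proposition \ref{lift} (which was proved precisely for this purpose) and the general Kiem--Li machinery; yours is self-contained, makes the annihilation mechanism explicit, and avoids passing to the absolute obstruction theory --- at the price of re-proving the relevant case of \cite[Prop.\ 4.3]{KL} and of needing the stronger, derived-level form of the compatibility rather than the $H^1$-statement of Lemma \ref{tri}.

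One soft spot: your second paragraph's local verification is muddled as written. The phrase ``$B=s_B^{-1}(0)$'' is not meaningful --- $B$ is the total space of the Hodge bundle, smooth over $M$, not the zero locus of any section --- and the arc/leading-coefficient step implicitly requires an extension of the cosection off $\cX$ (to the ambient smooth $V$) before it can be evaluated along arcs; as stated it evaluates $\bar\sigma$ at points where it is not defined. This paragraph is dispensable, however: either the derived-level commutativity extracted from \eqref{comm2} already closes the argument as above, or you may fall back on quoting \cite{KL} --- but note that to do so you must first invoke Proposition \ref{lift} (the descent of $\sigma$ to $\Ob_\cX$), which is exactly the paper's route.
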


\begin{proof}
The smoothness of the morphism from $\bC_{\cX/M}$ to $\bC_{\cX}$(the intrinsic normal cone of $\cX$) and Propostion \ref{lift} 
reduce the Proposition to the absolute case, which is proved in \cite[Prop. 4.3]{KL}.
\end{proof}

Following \cite{KL}, we form the localized Gysin map
$$0^!_{\sigma,\mathrm{loc}}: A\lsta( h^1/h^0(E^\bullet_{\cX/M})_{\sigma})\lra A_{\ast-\mathrm v}( D(\sigma)),
$$
where $\mathrm v=\text{vir}.\dim X-\dim M$ and
$\text{vir}.\dim X$ is given in (\ref{vd}). Recall $D(\sigma)=M\sub \cX$ (cf. Lemma \ref{Dege}).

\begin{defi-prop}\label{defi} Let $([\CC^n/G],W)$ be an LG space. For $g\ge 0$ and 
a $g$-admissible $\gamma$, we define the Witten's top Chern class of $\barM_{g,\gamma}(G)$ to be 
$$[\barM_{g,\gamma}(G)^p]\virt_{\sigma}:=
0^!_{\sigma,\mathrm{loc}}([\bC_{\barM_{g,\gamma}(G)^p/\barM_{g,\gamma}(G)}])
\in A_{\ast} (\barM_{g,\gamma}(G)),
$$
for the cosection $\sigma$ constructed using $W$.
It is independent of the choices of $W$.
 \end{defi-prop}
 
\begin{proof}  We only need to prove the independence on  $W$.
Suppose $([\CC^n/G],W_0)$ and $([\CC^n/G],W_1)$ are two LG spaces of the same weight $(d,\delta)$.
Let $W_t=tW_1+(1-t)W_0$, $t\in \Ao$. Then there is a Zariski open $U\sub \Ao$ 
containing $0,1$ such that $W_t$ is nondegenerate for $t\in U$. 
Then every $W_t$, $t\in U$, induces a cosection $\sigma_t$ of $\Ob_{\barM_{g,\gamma}(G)^p}$.
Indeed, if $\sigma_0$ and $\sigma_1$ are the cosections constructed using $W_0$ and $W_1$, then
$\sigma_t=t\sigma_1+(1-t)\sigma_0$. 

For $t\in U$, since $W_t$ is non-degenerate, the degeneracy loci of $\sigma_t$ is $M=\barM_{g,\gamma}(G)$. 
The family $\sigma_t=t\sigma_1+(1-t)\sigma_0$ forms a family of cosections of the family moduli space
$U\times \barM_{g,\gamma}(G)^p$. As this family is a constant family, \cite[Thm 5.2]{KL} applies and hence 
$[\barM_{g,\gamma}(G)^p]\virt_{\sigma_t}\in A\lsta (\barM_{g,\gamma}(G))$ is independent of $t$.
 \end{proof}
 
Because of the independence to the choice of $W$, the class $[\barM_{g,\gamma}(G)^p]\virt_\sigma$ only depends on 
$G\le \Gm^n$ and the weight $(d,\delta)$. In the following
we will drop the subscript $\sigma$, and denote the Witten's
top Chern class of $([\CC^n/G],W)$ to be
$$[\barM_{g,\gamma}(G)^p]\virt\in A\lsta (\barM_{g,\gamma}(G)).
$$ 

\section{Witten's top Chern class of strata}
\def\FF{F\bul}
\def\EE{E\bul}
\def\cXX{\mathcal X}

In this section, we fix an LG space $([\CC^n/G],W)$. We also fix $g$, $\ell$, and
a $g$-admissible $\gamma=(\gamma_i)_{i=1}^\ell$. Let $M=\barM_{g,\gamma}(G)$ be the moduli of $G$-spin curves,
and let $\cX=\barM_{g,\gamma}(G)^p$ 
be the moduli of $G$-spin curves with fields. 
As before, we denote by $(\pi_M: \cC_M\to M, \cL_{M,j})$ (part of) the universal family of $M$, and denote 
$\cE_M=\oplus_{j=1}^n \cL_{M,j}$. 

\subsection{Virtual cycles and Gysin maps}
We recall a general fact about the cosection localized virtual cycles and Gysin maps.

Let $\cM$ be a smooth DM stack; let $\cXX$ be a DM stack and let $\cXX\to \cM$ be a representable morphism.
We assume $\cXX\to \cM$ has a relative perfect obstruction theory $\phi_{\cXX/\cM}: (\LL_{\cXX/ \cM})\dual\to F\bul$;
we define its relative obstruction sheaf to be $\Ob_{\cXX/\cM}:=H^1(F^{\bullet})$. 
Let $\sigma:\Ob_{\cXX/\cM}\to \sO_\cXX$ be a cosection so that its 
composition with $\Omega\dual_{\cM}\to \Ob_{\cXX/\cM}$ is trivial. Then by
 \cite[Thm 5.1]{KL}, denoting $\bC$ ($=C_{\cXX/\cM}$) the normal cone of $\cXX/\cM$,
and $0^!_{\sigma,loc}$ the localized Gysin map defined in \cite{KL},
the $\sigma$-localized virtual cycle of $\cXX$ is 
$$[\cXX]\virt:=0^!_{\sigma,loc}[\bC]\in A\lsta (D(\sigma)).
$$

Let $\iota:\cS\to \cM$ be a proper representable l.c.i. (or flat) morphism between DM stacks of constant codimension. 
We form the Cartesian square
$$
\begin{CD}
\cY@>g>>\cXX\\
@VVV@VVV\\
\cS@>\iota>>\cM\,.
\end{CD}
$$
Since $\cXX\to \cM$ is representable, $\cY$ is a DM stack.
The obstruction theory of $\cXX\to\cM$ induces a perfect relative obstruction theory of $\cY\to\cS$ by pullback \cite[Prop 7.2]{BF}\footnote{The
assumption that $\iota:\cS\to\cM$ is l.c.i. is sufficient for \cite[Prop 7.2]{BF} to be valid.}:
$$\phi_{\cY/\cS}:( \LL_{\cY/\cS})\dual\lra \EE:=g\sta \FF ;
$$
the cosection $\sigma$ pulls back to a cosection $\sigma':\Ob_{\cY/\cS}\to \sO_\cY$
whose degeneracy loci $D(\sigma')=D(\sigma)\times_\cM \cS$.
Since $\iota$ is proper, $D(\sigma')$ is proper if $D(\sigma)$ is proper.
Further, the composite of $\Omega\dual_\cS\to \Ob_{\cY/\cS}$
with $\sigma':\Ob_{\cY/\cS}\to\sO_\cM$ vanishes because of the vanishing assumption
on the similar composition on $\cXX$. Appying \cite[Thm 5.1]{KL}, using the virtual normal cone $\bC'=C_{\cY/\cS}$
of $\cY\to\cS$, we obtain the
$\sigma'$-localized virtual cycle 
\beq\label{M}[\cY]\virt:=0^!_{\sigma',loc}[\bC']\in A\lsta (D(\sigma')).
\eeq

\begin{lemm}\label{sub-cycle} We have identity $\iota^![\cXX]\virt=[\cY]\virt$ 
\end{lemm}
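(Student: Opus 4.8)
\textbf{Proof plan for Lemma \ref{sub-cycle}.}
The plan is to reduce the statement to the compatibility of cosection-localized Gysin maps with l.c.i.\ pullback, proved in \cite{KL}. First I would observe that, by construction, everything in sight is obtained from the situation over $\cM$ by base change along $\iota:\cS\to\cM$: the obstruction theory $\EE=g\sta\FF$, the cosection $\sigma'=g\sta\sigma$, and the degeneracy locus $D(\sigma')=D(\sigma)\times_\cM\cS$ are all pullbacks. The key geometric input is the comparison of normal cones: since $\cXX\to\cM$ is representable and $\cY=\cXX\times_\cM\cS$, and since $\iota$ is l.c.i.\ (or flat) of constant codimension, the normal cone $\bC'=C_{\cY/\cS}$ sits inside $g\sta\bC$ via the closed embedding induced by $C_{\cY/\cS}\hookrightarrow C_{\cXX/\cM}\times_\cXX\cY$ coming from the exact sequence of cotangent complexes for $\cY\to\cXX\to$ (or $\cY\to\cS\to\cM$). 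More precisely, I would identify $[\bC']$ with the refined Gysin pullback $\iota^![\bC]$ of the cone cycle, using the standard deformation-to-the-normal-cone compatibility for cones under l.c.i.\ base change (as in \cite{BF}, extended to the DM-stack setting).

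Second, I would feed this into the cosection machinery. Both $[\bC]$ and $[\bC']$ lie in the kernel cone-stacks $h^1/h^0(\FF)_\sigma$ and $h^1/h^0(\EE)_{\sigma'}=g\sta\bl h^1/h^0(\FF)_\sigma\br$ respectively (Proposition \ref{loc-vir-sig1} applied on both sides), so the refined Gysin map $\iota^!$ restricts to a map between these kernel-stacks compatible with the tautological closed embeddings into the full bundle-stacks. Then I would invoke \cite[Thm 5.2]{KL} (or the functoriality statement for $0^!_{\sigma,\mathrm{loc}}$ therein), which says precisely that the cosection-localized Gysin map commutes with l.c.i.\ pullback: $\iota^!\circ 0^!_{\sigma,\mathrm{loc}}=0^!_{\sigma',\mathrm{loc}}\circ\iota^!$ as maps $A\lsta(h^1/h^0(\FF)_\sigma)\to A_{\ast-\mathrm v}(D(\sigma'))$. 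Applying both sides to $[\bC]$ and using $\iota^![\bC]=[\bC']$ from the previous step yields
$$\iota^![\cXX]\virt=\iota^!\, 0^!_{\sigma,\mathrm{loc}}[\bC]=0^!_{\sigma',\mathrm{loc}}\,\iota^![\bC]=0^!_{\sigma',\mathrm{loc}}[\bC']=[\cY]\virt,$$
which is the claim.

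The main obstacle I anticipate is not the abstract functoriality of $0^!_{\sigma,\mathrm{loc}}$ — that is available off the shelf from \cite{KL} — but rather the bookkeeping to verify that the hypotheses of \cite[Thm 5.2]{KL} are genuinely met in this relative/stacky setting: namely that $\iota$ being l.c.i.\ (or flat) of constant codimension between DM stacks, together with representability of $\cXX\to\cM$, really does produce the compatible pullback of the normal cone $[\bC]\mapsto[\bC']$ and of the kernel cone-stacks, and that the dimension shift $\mathrm v$ is tracked correctly (here $\mathrm v$ on the $\cS$-side equals the $\cM$-side virtual codimension since $\iota$ has constant codimension, so the shift is the relative one and matches). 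One should also check the vanishing of the composite $\Omega\dual_\cS\to\Ob_{\cY/\cS}\to\sO_\cY$, but this is immediate by pulling back the corresponding vanishing over $\cXX$. Once these verifications are in place, the lemma follows formally.
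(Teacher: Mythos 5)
Your overall architecture is the same as the paper's (treat the flat case by direct pullback of cones, and in the l.c.i.\ case reduce to a cone comparison plus a commutation of $\iota^!$ with the localized Gysin map), but you have placed the difficulty in the wrong spot: the commutation $\iota^!\circ 0^!_{\sigma,\mathrm{loc}}=0^!_{\sigma',\mathrm{loc}}\circ\iota^!$ is \emph{not} available off the shelf from \cite{KL}. Theorem 5.2 of \cite{KL} is the deformation-invariance statement for cosection localized classes in families over a base (this is what the present paper cites it for, e.g.\ in Definition-Proposition \ref{defi}); it does not assert compatibility of $0^!_{\sigma,\mathrm{loc}}$ with refined Gysin maps of general l.c.i.\ morphisms. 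For the class $[\bC]$ in question, that commutation together with your cone identification \emph{is} the lemma, so it cannot be quoted — it is exactly what has to be proved, and it is the content of the paper's argument.

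Concretely, the paper proves it as follows, and this is the material your plan skips. One introduces $N=N_{\cS/\cM}$ and the lifted cosection $\ti\sigma$ on $h^1/h^0(g\sta \FF)\times_\cM N$ obtained from $\sigma_\cS$ and $0:N\to\sO_\cM$; the ``standard property of localized Gysin maps'' then yields only $\iota^!\,0^!_{\sigma,\mathrm{loc}}[\bC]=0^!_{\ti\sigma,\mathrm{loc}}[C_{g\sta\bC/\bC}]$, featuring the normal cone of the pullback inside $\bC$, not yet $[\bC']$. The passage from $[C_{g\sta\bC/\bC}]$ to $[\bC'\times_\cS N]$ is \emph{not} a cycle identity and is not in \cite{BF}: it is the Kresch--Vistoli rational equivalence $R\in W\lsta(\bC\times_\cM N)$ with $\partial R=[C_{g\sta\bC/\bC}]-[\bC'\times_\cS N]$ (\cite{Kr,Vi}). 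The cosection-specific point, which is the real crux, is that one must check $R$ lies in the kernel stack of $\ti\sigma$ before the localized Gysin map can be applied to it; this follows because $\sigma$ annihilates $\bC$ (Proposition \ref{loc-vir-sig1}), hence $\ti\sigma$ annihilates $\bC\times_\cM N$, which supports $R$. Without this membership, a rational equivalence that only holds in the ambient bundle stack does not determine the localized class in $A\lsta(D(\sigma'))$. With it, one concludes $0^!_{\ti\sigma,\mathrm{loc}}[C_{g\sta\bC/\bC}]=0^!_{\ti\sigma,\mathrm{loc}}[\bC'\times_\cS N]=0^!_{\sigma',\mathrm{loc}}[\bC']=[\cY]\virt$. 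So your proposal is not wrong in outline, but the step you dismiss as ``bookkeeping'' (and attribute to \cite{KL} and \cite{BF}) is precisely the argument that needs to be written.
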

 
\begin{proof} If $\iota$ is flat, $\iota\sta\bC_{\cX/\cM}=\bC_{\cY/\cM}$ and the identity follows from functorial property of cosection localization.
We now consider the case where $\iota$ is an l.c.i. morphism. We denote the normal sheaf
$N=N_{\cS/\cM}$; it is a bundle stack over $\cS$ because $\iota$ is an l.c.i morphism. 
A rational equivalence $R\in W\lsta(\bC\times_\cM N)$ was  constructed in \cite{Kr,Vi} such that
$\partial R=[C_{g\sta \bC/\bC}]-[\bC'\times_\cS N]$.
Let 
$$\ti\sigma:h^1/h^0(g\sta F^{\bullet})\times_\cM N\to \sO_\cM
$$
be the lift of the pair (of the induced) $\sigma_\cS:h^1/h^0(g\sta F^{\bullet})\to \sO_\cM$ and $0: N\to \sO_\cM$.
Then the degeneracy loci of $\ti\sigma$ is identical to the degeneracy loci of $\sigma'$. 
The standard property of localized Gysin maps states that
\beq\label{pullback} \iota^![\cXX]\virt=\iota^!0^!_{\sigma,loc}[\bC]=0^!_{\ti\sigma,loc}[C_{g\sta \bC/\bC}].\eeq 
Since $\sigma$ annihilates the reduced part of $\bC$, the reduced part of the stack $\bC \times_\cM N $ is also annihilated by $\ti\sigma$ (i.e. lies in the
kernel stack of $\ti\sigma$).
Thus $R$ lies in the kernel stack of $\ti\sigma$, and we have
$0^!_{\ti\sigma,loc}[C_{g\sta \bC/\bC}]=0^!_{\ti\sigma,loc}[\bC'\times_\cS N]=0^!_{\sigma',loc}[\bC']$,
which is $[\cY]\virt$. This proves the Lemma.
\end{proof}

 

We will apply  Lemma \ref{sub-cycle} to the pair $X=\barM_{g,\gamma}(G)^p\to M=\barM_{g,\gamma}(G)$,
and some l.c.i. morphism $S\to M$ whose image is the closed substack defined by the topological (or geometrical) type of the $G$-spin curves in $M$.

 \subsection{Topological strata of the moduli of $G$-spin curves} 
In this subsection, we will associate to a pointed $G$-spin curve its decorated dual graph.  For such a dual graph, we will construct its
associated stratum in the moduli space. 
We will rephrase the constructions in \cite[Sect 2.2.2 and 2.2.3]{FJR2}.

Given a pointed $G$-spin curve $(\sC,\Sigma^\sC_i,\sL_j,\varphi_k)$ (not necessarily connected), following the standard procedure
we associate to it a decorated (dual)
graph $\Gamma$ as follows\footnote{Our convention is that every edge is directed and has two vertices;
every tail has only one vertex.}.
 Its vertices $v\in V(\Gamma)$ correspond to irreducible components $\sC_v\sub \sC$;
its edges $e\in E(\Gamma)$ correspond to nodes $q_e\in\sC$ so that a vertex $v$ of $e$
has $q_e\in\sC_{v}$; its tails $t_i\in T(\Gamma)$ correspond to the 
markings $\Sigma_i^\sC$ so that the vertex $v$ of $t_i$ has $\Sigma_i^\sC\sub \sC_v$;
we give $e\in E(\gamma)$ a direction that is consistent with a labeling of its two vertices $v_{e}^-$ and $v_{e}^+$ 
(i.e. from $-$ to $+$).


We add decorations to $\Gamma$. 
For a (directed) edge $e$, in case $v_e^-\ne v_e^+$,
there is a unique labeling of the two branches of (the formal completion of $\sC$ along $q$) $\hat \sC_q$ as 
$\hat\sC_{e-}$ and $\hat\sC_{e+}$
so that $\hat\sC_{e+}$ is the formal completion of $\sC_{v_e^+}$ along $q$;
in case $v_e^-= v_e^+$, we fix a labeling of the two branches of $\hat \sC_q$ into $\hat\sC_{e-}$ and $\hat\sC_{e+}$.
Under this agreement, we decorate $e\in E(\Gamma)$ by the monodromy representation 
$\gamma_e\defeq \gamma_{q_e+} : \mu_{r_{q_e}}\to G$.
We decorate the tail $t_i$
by $\gamma_i$, the monodromy representation along $\Sigma_i^\sC$.
We decorate every vertex $e$ by $g_e=g(\sC_e)$, the arithmetic genus of  $\sC_e$.

We also define the coarse dual graph of $(\sC,\Sigma^\sC_i,\sL_i,\varphi_j)$ to be a no-edge
graph so that its vertices $e\in E(\Gamma)$ correspond to connected components $\sC_e\sub \sC$, decorated by 
$g_e=g(\sC_v)$; its tails and decorations are as before.
Given a decorated graph $\Gamma$, by reversing the direction of one edge $e\in E(\Gamma)$
and replace its decoration $\gamma_e$ by $\gamma_e\upmo$, we obtain a new decorated graph. We call this
process ``direction reversing" operation. We say two decorated graphs are equivalent if one is derived from the other
by repeated ``direction reversing" operations.

In general, a $G$-decorated graph 
is one that is the dual graph of a not necessary connected $G$-spin curve. 


\begin{defi}
We say a pointed $G$-spin curve $(\sC,\Sigma^\sC_i,\sL_j,\varphi_k)$ is labeled by a $G$-decorated
graph $\Gamma$ if there are
bijections between the sets of irreducible components (resp. nodes; resp. markings) 
of $(\sC,\Sigma^\sC_i)$ with $V(\gamma)$ (resp. $E(\Gamma)$; resp. $T(\Gamma)$) that make $\Gamma$ a decorated dual
graph of $(\sC,\Sigma^\sC_i,\sL_j,\varphi_k)$. 

In case $\Gamma$ has no edges, we say the family is 
coarsely labeled by the $\Gamma$ if we replace ``irreducible components" by ``connected components".
\end{defi}

Given a $G$-decorated graph $\Gamma$, we define its complete splitting to be
the graph resulting from breaking every (directed) edge $e\in E(\Gamma)$ into a pair of tails, denoted by
$t_{e-}$ and $t_{e+}$, attached to the vertices $v_e^-$ and $v_e^+$, and
assigned the labeling $\gamma_e\upmo$ and $\gamma_e$, respectively.
We denote the complete splitting of $\Gamma$ by $\Gamma_{sp}$. 

%
%

\begin{defi}\label{4.3}
Let $\Gamma$ be a $G$-decorated graph and $\Gamma_{sp}$ its complete splitting. For a
scheme $S$, we say an $S$-family $(\cC,\Sigma_i^{\cC},\sL_{j}, \varphi_{k})$ 
of pointed $G$-spin curves is weakly labeled by ${\Gamma}$ if the following hold:
\begin{enumerate}
\item there is a family of $G$-spin curves $(\cC',\Sigma_i^{\cC'},\sL'_{j}, \varphi'_{k})$ 
coarsely labeled by $\Gamma_{sp}$;
\item there is a morphism $\cC'\to\cC$ so that $\cC$ is the result of identifying
all pairs of marked sections of $(\cC',\Sigma_i^{\cC'})$ associated with $t_{e-}$ and $t_{e+}$, for all $e\in \Gamma$;
\item under $\cC'\to\cC$, $(\sL'_{j}, \varphi'_{k})$ is the pullback of $(\sL_{j}, \varphi_{k})$,
and $\Sigma_i^{\cC}$ is identified with $\Sigma_i^{\cC'}$ for each $t_i$ , the $i$-tail of $\Gamma$.
\end{enumerate}
\end{defi}


We recall the associated operation on the dual graphs by smoothing nodes of $G$-spin curves. Let $\Gamma$ be a $G$-decorated
dual graph and $e\in E(\Gamma)$. We define $\Gamma/e$ to be the graph resulting by eliminating the edge $e$ from
$\Gamma$; merging the vertices $v_e^-$ with $v_e^+$ to a single vertex, denoted by $\overline{v}$, and decorate it by 
$g_{\bar v}=g_{v_e^-}+g_{v_e^+}$ when $v_e^-\ne v_e^+$, and $g_{\bar v}=g_{v_e^-}+1$ when $v_e^-=v_e^+$.
We call $\Gamma/e$ the contraction of $\Gamma$ by $e$. This process also 
defines a graph map $\Gamma\to \Gamma/e$ that is the identity except that it sends $e$, $v_e^\pm$ to $\bar v\in V(\Gamma/e)$.

We call $\Gamma'$ an edge-contraction of $\Gamma$ if $\Gamma'$ results from $\Gamma$ by repeatedly contracting edges.
Let $\Gamma\to\Gamma'$ be the compositions of the contraction maps; we define $\Aut(\Gamma/\Gamma')$ to be the
automorphisms of $\Gamma$ that commute with the identity of $\Gamma'$ via the map $\Gamma'\to \Gamma$,
up the equivalences (defined by edge-direction reversing operation).

We form the category of families of weakly $\Gamma$-labeled $G$-spin curves; it is a DM stack, denoted by
$\barM_\Gamma(G)$. In case $\Gamma'$ is an edge-contraction of $\Gamma$,
we have a tautological morphism
$$\tau_{\Gamma\Gamma'}: \barM_{\Gamma}(G)\lra \barM_{\Gamma'}(G).
$$

In case $\Gamma$ is connected and $\Gamma'$ consists of a single vertex, decorated by $g$, then this morphism
takes the form
$\tau_\Gamma: \barM_{\Gamma}(G)\lra \barM_{g,\gamma}(G)$,
where $\gamma=(\gamma_i)$ is the collection of decorations of tails of $\Gamma$, consistent with the ordering of tails.

%
%
%


\begin{lemm}\label{emb} 
Suppose $\Gamma'$ is an edge-contraction of $\Gamma$.
Then the morphism $\tau_{\Gamma\Gamma'}$ is a finite l.c.i. morphism. We define
$\barM_{\Gamma'}(G)_{\Gamma}$ to be the image stack of $\tau_{\Gamma\Gamma'}$. Then
the factored
$\barM_{\Gamma}(G)\to \barM_{\Gamma'}(G)_{\Gamma}$  has pure degree $|\Aut(\Gamma/\Gamma')|$. 
\end{lemm}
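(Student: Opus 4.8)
The plan is to reduce the statement to the two extreme cases of an edge-contraction, namely contracting a single non-loop edge and contracting a single loop, and then to iterate. First I would recall that $\barM_\Gamma(G)$ is, by Definition \ref{4.3}, the stack of families weakly labeled by $\Gamma$, so it carries a gluing description: a family weakly labeled by $\Gamma$ is the same as a family coarsely labeled by $\Gamma_{sp}$ together with the data of identifying the pairs of sections $\Sigma_{t_{e-}}^{\cC'}$ and $\Sigma_{t_{e+}}^{\cC'}$ for each $e\in E(\Gamma)$. The morphism $\tau_{\Gamma\Gamma'}$ forgets precisely the information that the glued node came from breaking the edge $e$ (for the edges of $\Gamma$ not in $\Gamma'$); concretely, after a single contraction $\Gamma\to\Gamma/e$, $\barM_\Gamma(G)$ sits over $\barM_{\Gamma/e}(G)$ by remembering a node of the universal curve and its branch labeling. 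On the level of the moduli of twisted \emph{curves} this is the clutching/gluing morphism $\fmgl$-type map, which is well known to be finite and l.c.i.\ (indeed regular of codimension $1$); the $G$-spin structure is pulled back via the gluing morphism $\cC'\to\cC$, so adding the $(\sL_j,\varphi_k)$ data does not change the local structure of the morphism — it is a base change of the clutching morphism on twisted curves along $\barM_{\Gamma/e}(G)\to\fmgl\utw$. Hence $\tau_{\Gamma,\Gamma/e}$ is finite l.c.i.; composing finitely many such gives that $\tau_{\Gamma\Gamma'}$ is finite l.c.i., since a composition of finite l.c.i.\ morphisms is finite l.c.i.

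Next I would pin down the degree. The source of the possible non-injectivity is exactly the choice of labeling of the two branches at a glued node. When $v_e^-\ne v_e^+$ in $\Gamma$, breaking $e$ produces two tails on two \emph{distinct} components, and Definition \ref{4.3} already rigidifies which branch is ``$+$'': $\hat\sC_{e+}$ is the completion of $\sC_{v_e^+}$. So there is no ambiguity and this contraction contributes a trivial factor — matching that such an $e$ contributes no extra automorphism to $\Aut(\Gamma/\Gamma')$. When $v_e^-=v_e^+$ is a loop, Definition \ref{4.3} says we \emph{fixed} a labeling of the two branches; the ``direction reversing'' operation identifies a graph with that labeling swapped (and $\gamma_e$ replaced by $\gamma_e\upmo$) with the original, so the map to $\barM_{\Gamma/e}(G)$ that forgets the labeling is $2$-to-$1$ generically, i.e.\ has degree equal to the order of the $\ZZ/2$ generated by the branch-swap — which is exactly the contribution of a loop to $|\Aut(\Gamma/\Gamma')|$. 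More precisely, after factoring through the image $\barM_{\Gamma'}(G)_\Gamma$, the fiber of $\barM_\Gamma(G)\to\barM_{\Gamma'}(G)_\Gamma$ over a generic point is a torsor under $\Aut(\Gamma/\Gamma')$: the group $\Aut(\Gamma/\Gamma')$ (automorphisms of $\Gamma$ fixing $\Gamma'$, up to edge-reversal) acts on $\barM_\Gamma(G)$ over $\barM_{\Gamma'}(G)_\Gamma$ by permuting/relabeling the edges of $\Gamma$ that are contracted, and this action is free and transitive on a general geometric fiber because over a curve with the generic (maximally split) topological type all the irreducible components, nodes and markings are distinguishable. Therefore $\barM_\Gamma(G)\to\barM_{\Gamma'}(G)_\Gamma$ has pure degree $|\Aut(\Gamma/\Gamma')|$.

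I would assemble this as follows: (1) treat the single-contraction case, identifying $\tau_{\Gamma,\Gamma/e}$ with a base change of the twisted-curve clutching morphism and invoking that the latter is finite and a regular embedding in codimension one, hence l.c.i.; (2) deduce the general case by composition, noting l.c.i.\ and finiteness are preserved and codimensions add; (3) define $\barM_{\Gamma'}(G)_\Gamma$ as the scheme-theoretic image and observe $\barM_\Gamma(G)\to\barM_{\Gamma'}(G)_\Gamma$ is finite surjective; (4) compute the degree by exhibiting the free transitive $\Aut(\Gamma/\Gamma')$-action on generic fibers, separating the loop contributions (branch swap) from the non-loop ones (rigidified, trivial).

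\textbf{Main obstacle.} The bulk of the real work — and the step I expect to be most delicate — is Step (4), the degree computation: one must check that $\Aut(\Gamma/\Gamma')$ acts \emph{freely} on a general fiber, i.e.\ that a $G$-spin curve with the most degenerate topological type compatible with $\Gamma$ has no ``accidental'' automorphisms of its $\sL_j$'s beyond those coming from graph automorphisms, and that the action is \emph{transitive}, i.e.\ every weak $\Gamma$-labeling of a given $\Gamma'$-curve over a generic point arises from a graph automorphism. Both are essentially bookkeeping about how Definition \ref{4.3}'s choices (branch labelings, ordering of the split tails) interact with the equivalence by direction-reversing, but getting the count exactly $|\Aut(\Gamma/\Gamma')|$ rather than off by a factor of $2^{(\#\text{loops})}$ requires care; this is precisely where the loop-versus-non-loop distinction built into the decorated-graph conventions does the work, and it mirrors the analogous computation for ordinary $\barM_{g,n}$. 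The l.c.i.\ claim itself should be routine once the clutching-morphism identification is in place.
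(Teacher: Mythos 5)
Your overall strategy is the same as the paper's — reduce to contracting a single edge, transfer the question to the moduli of twisted curves (the spin data being \'etale over it), and verify the degree at a generic point of the image — but the two proofs put the weight in different places. Where you invoke as ``well known'' that the clutching morphism for \emph{twisted} curves is finite l.c.i.\ of codimension one, the paper's actual work is exactly to establish this, via Olsson's \'etale-local description of $\mathfrak M_{g,\ell}^{\rm tw}\to\mathfrak M_{g,\ell}^{\rm tw,\circ}$: locally the chart is $[\spec\,\sO_S[x_1,\ldots,x_m]/(x_i^{r_i}-u_i)\,/\,\mu_{r_1}\times\cdots\times\mu_{r_m}]$, the locus where the node $p_i$ persists is the Cartier divisor $\{x_i=0\}$, and $\tau_{\Gamma\Gamma'}$ is \'etale-locally the inclusion of such a divisor; this is the point where the naive analogy with the $\barM_{g,n}$ clutching maps needs justification, because of the $r$-th root structure $x^r=u$ along the boundary (the potential $r$-fold ramification is absorbed into the stack structure, not into the map). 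So your step (1) is correct but should cite Olsson (or reproduce these charts) rather than treat the twisted clutching map as standard; also, your ``base change along $\barM_{\Gamma/e}(G)\to\mathfrak M^{\rm tw}$'' is really an open-and-closed substack of that base change, since the decoration $\gamma_e$ constrains the nodal monodromy of the spin structure. On the degree, the paper simply says it ``can be verified over the generic point'' and omits the proof, so your step (4) supplies an argument the paper does not give; it is consistent in outline (the generic fiber is the set of weak $\Gamma$-labelings, a torsor under $\Aut(\Gamma/\Gamma')$ once the loop/direction-reversal conventions are matched), though your worry about ``accidental automorphisms'' of the $\sL_j$'s is immaterial for the count: the usual stacky degree formula, which weights each preimage point by the ratio of automorphism groups (including the ghost automorphisms $\mu_{r}$ at the new stacky nodes), makes the total degree equal to the naive number of labelings regardless of how spin-curve automorphisms permute them.
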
 

\begin{proof} 
By induction, we only need to prove the case when $\Gamma'$ is obtained by contracting one edge of $\Gamma$.
Let $\mathfrak M_{g,\ell}^{\rm tw,\circ}$ be the moduli stack of genus $g$ stable curves with $\ell$-twisted
markings and no twisting at nodal points. 
The structure of the  forgetful morphism (eliminating the stacky structure along nodes) $\mathfrak M_{g,\ell}^{\rm tw}\to \mathfrak M_{g,\ell}^{\rm tw,\circ}$ was studied in details by Olsson \cite{Ol} (see \cite{Ch1} \S 2.4 as well). Similar to the situation studied by Chiodo and Ruan in \cite{CR},
$\barM_{g, \gamma}(G)$ is \'etale over $\mathfrak M_{g, \ell}^{\rm tw}$. Therefore we can and will use Olsson's result in \cite{Ol} in the following.

Let $\xi= (\sC,\Sigma^\sC_i,\sL_j,\varphi_k)\in \barM_{g,\gamma}(G)(\CC)$. Suppose $\sC$ has nodal
points $p_1,\ldots, p_m$ with stabilizer groups $\mu_{r_i}$ at $p_i$. 
By \cite[Remark 1.10]{Ol} (see \cite{Ch1} \S 2.4 and Remark 2.4.10 as well), we can find an affine \'etale neighbourhood $S\to 
\mathfrak M_{g,\ell}^{\rm tw,\circ}$ so that an open neighborhood of the product
$$\bar\xi\in U\sub \barM_{g, \gamma}(G)\times_{\mathfrak M_{g,\ell}^{\rm tw,\circ}}S,
$$
where $\bar \xi$ lies over $\xi$, is 
$$U=\big[\spec A/\mu_{r_1}\times\ldots\times\mu_{r_m}\big], \quad A=\sO_S[x_1, \ldots, x_m]/(x_i^{r_i}=u_i,i=1,\cdots, m),
$$
where $u_i\in \sO_S$ defines the divisor of curves in $S$ of which the node $p_i$ in the coarse moduli of $\sC$
remains nodal; $\mu_{r_i}$ acts on $A$ via $x_j^{\zeta_{r_i} }=x_j$ when $j\ne i$
and $=\zeta_{r_i}x_i$ when $j=i$. By \cite[Remark 1.10]{Ol}, $\{x_i=0\}$ is the divisor where the node $p_i$
remains nodal. 

Assume $\xi\in \barM_{\Gamma}(G)$. Since $\Gamma\to\Gamma'$ is by contracting one edge, 
by re-indexing, we can assume that it contracts the edge associated to the node $p_m$.
By shrinking $S$ if necessary, we have
isomorphisms
$$
\bar\xi\in U\cap \bl \barM_{\Gamma'}(G)\times_{\mathfrak M_{g,\ell}^{\rm tw,\circ}}S\br
=\big[\spec\bl A/(x_1, \ldots, x_{m-1})\br/\mu_{r_1}\times\ldots\times\mu_{r_{m-1}}\big],
$$
and
$$
\bar\xi\in U\cap \bl \barM_{\Gamma}(G)\times_{\mathfrak M_{g,\ell}^{\rm tw,\circ}}S\br
=\big[\spec\bl A/(x_1, \ldots, x_{m})\br/\mu_{r_1}\times\ldots\times\mu_{r_m}\big].
$$
Therefore the morphism $\tau_{\Gamma\Gamma'}$, restricted to 
$\barM_{\Gamma}(G)\times_{\mathfrak M_{g,\ell}^{\rm tw,\circ}}S$, is \'etale covered by the morphism
$$\spec\bl A/(x_1, \ldots, x_{m})\br\lra
\spec\bl A/(x_1, \ldots, x_{m-1})\br,
$$
which is an immersion of a Cartier divisor.
This proves that $\tau_{\Gamma\Gamma'}$ is a finite l.c.i. morphism.

Finally, the degree of $\tau_{\Gamma\Gamma'}$ is $|\Aut(\Gamma/\Gamma')|$ can be verified over generic point of 
$ \barM_{\Gamma'}(G)_{\Gamma}$; we omit the proof. 
\end{proof}
    
\subsection{The forgetful morphism}

In GW-theory, by removing the last marked point and stabilizing, we obtain the forgetful morphism
$\barM_{g,\ell}\to\barM_{g,\ell-1}$. Similar morphism exists for $\barM_{g,\gamma}(G)$ if $\gamma_\ell$
in $\gamma$ takes the form (cf. Subsection 2.1)
\beq\label{gd} \gamma_\ell: \mu_d\lra G,\quad \zeta_d\mapsto \jmath_\delta.
\eeq
In general, let $\Gamma$ be a $G$-decorated graph whose $\ell$-tails are decorated by $\gamma=(\gamma_i)_{i=1}^\ell$.
Let $\Gamma'$ be the stabilization of the graph after removing (the tail) $t_\ell$ from $\Gamma$; thus the tails of 
$\Gamma'$ are decorated by $\gamma'=(\gamma_i)_{i=1}^{\ell-1}$.

\begin{theo}[Forgetting tails]\label{fgtail}
Let $\Gamma$ and $\Gamma'$ be as stated. Suppose $\gamma_\ell$
takes the form \eqref{gd}.
Then there is a flat forgetful morphism 
$$\ff_{\Gamma,\ell}:\barM_\Gamma(G)\lra \barM_{\Gamma'}(G)
$$
that sends $(\sC,\Sigma_i^\sC,\sL_j,\varphi_k)$ to $(\sC',\Sigma_{j<\ell}^\sC, \sL'_{j}, \varphi_k')$, where 
$\cC'$ results from $\cC$ by removing the marked section $\Sigma^{\cC}_\ell$, eliminating the stacky structure
along $\Sigma^\cC_\ell$, and stabilizing $(\sC', \Sigma_{j<\ell}^{\sC})$; $\sL_j'$ is the pushforward of $\sL_j$
under the tautological map $\cC\to\cC'$. 
\end{theo}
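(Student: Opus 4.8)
The assertion is the existence of a flat forgetful morphism $\ff_{\Gamma,\ell}:\barM_\Gamma(G)\to\barM_{\Gamma'}(G)$ with the stated description of its effect on families. Since $\barM_\Gamma(G)$ is by definition the moduli of weakly $\Gamma$-labelled $G$-spin curves, it suffices to produce, for every scheme $S$ and every $S$-family $(\sC,\Sigma_i^\sC,\sL_j,\varphi_k)$ weakly labelled by $\Gamma$, a canonical $S$-family $(\sC',\Sigma_{j<\ell}^\sC,\sL_j',\varphi_k')$ weakly labelled by $\Gamma'$, compatibly with pullback along $S'\to S$, and then to check flatness. I would build $\ff_{\Gamma,\ell}$ in three steps, mirroring the description in the statement: first construct $\cC'$ at the level of twisted curves, then push forward the line bundles, then transport the isomorphisms $\varphi_k$.

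\textbf{Step 1: the curve.} Starting from $(\sC,\Sigma_i^\sC)$, remove the section $\Sigma_\ell^\sC$; because $\gamma_\ell$ has the special form \eqref{gd}, namely $\mu_d\to G$, $\zeta_d\mapsto\jmath_\bd$, this marking carries exactly the monodromy of the $d$-th-root structure that is ``built in'' to every $G$-spin curve, so after forgetting it one may apply the standard rigidification/coarsening of the stacky structure along $\Sigma_\ell^\sC$ (Olsson, as already invoked in the proof of Lemma~\ref{emb}) to obtain a twisted curve with one marking dropped, and then the usual stabilization contracts any destabilized rational component. This is exactly the analogue of the GW forgetful morphism $\barM_{g,\ell}^{\rm tw}\to\barM_{g,\ell-1}^{\rm tw}$ of twisted curves; one gets a morphism $\cC\to\cC'$ over $S$ that is a composition of a root-stack projection and a contraction of a (relative) $(-1)$- or $(-2)$-type rational tail.

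\textbf{Step 2: the line bundles and isomorphisms.} Set $\sL_j'=p_\ast\sL_j$ where $p:\cC\to\cC'$ is the map from Step~1. Here is where the special form of $\gamma_\ell$ is used a second time: the monomial constraints $\bm_k(\sL_1,\dots,\sL_n)\cong(\omega_\sC\ulog)^{w(\bm_k)}$ together with $\gamma_\ell=\jmath_\bd$ guarantee that each $\sL_j$ has monodromy $\Theta^\ell_j=\delta_j/d\pmod 1$ at $\Sigma_\ell^\sC$ — precisely the amount by which $\omega_\sC\ulog$ differs from $\omega_\sC(\Sigma\setminus\Sigma_\ell)$-type sheaves at that point — so the pushforward along the coarsening does not lose sections and commutes with the monomial maps. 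On a contracted rational tail, a short cohomology-and-base-change computation (or the argument of Lemma~\ref{hi}) shows $p_\ast\sL_j$ is again a line bundle and that $R^1$ vanishes, so $p_\ast$ is compatible with base change on $S$; one then transports $\varphi_k$ to $\varphi_k'$ using $\omega_{\cC'}\ulog=$ pushforward of $\omega_\sC\ulog$ (the log-dualizing sheaf behaves correctly under stabilization, as recorded in \S2), checking that $W_a$-monomials still map isomorphically to powers of $\omega_{\cC'}\ulog$, hence $(\cC',\Sigma^\sC_{j<\ell},\sL_j',\varphi_k')$ is a $G$-spin curve, and that the identifications dictated by the edges of $\Gamma'$ are inherited from those of $\Gamma$, so it is weakly $\Gamma'$-labelled.

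\textbf{Step 3: flatness.} Having a morphism, flatness can be checked étale-locally on the source and target. Using the étale charts over $\mathfrak M_{g,\ell}^{\rm tw,\circ}$ of the Olsson type already exhibited in the proof of Lemma~\ref{emb}, the morphism $\ff_{\Gamma,\ell}$ is covered by the classical forgetful morphism of twisted (equivalently, via the étale cover, ordinary) pointed curves, which is flat because $\barM_{g,\ell}\to\barM_{g,\ell-1}$ is the universal curve and hence flat; the root-stack coarsening along $\Sigma_\ell$ is a flat (even smooth) operation, and $\barM_{g,\gamma}(G)$ is étale over $\mathfrak M_{g,\ell}^{\rm tw}$, so flatness descends. \textbf{The main obstacle} will be Step~2: verifying that the pushforward $p_\ast\sL_j$ remains a \emph{line bundle} with its monomial isomorphisms intact requires carefully matching the monodromy weights $\Theta^\ell_j$ with the $\jmath_\bd$-representation and doing the local computation on the contracted rational tail — this is precisely where the hypothesis \eqref{gd} is indispensable, since for a general $\gamma_\ell$ the pushforward would fail to be invertible and no forgetful morphism exists.
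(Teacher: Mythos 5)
Your outline follows the statement's description (de-stackify at $\Sigma_\ell^{\sC}$, push the $\sL_j$ forward, stabilize, check flatness), but Step 2 has a genuine gap at exactly the point you yourself flag as ``the main obstacle''. The de-stackification part is fine: because $\gamma_\ell=\jmath_\bd$, the pushforward along the coarsening $\ft$ is invertible and the $\varphi_k$ descend, but note that they descend to isomorphisms $\bm_k(\ft_\ast\sL_1,\ldots,\ft_\ast\sL_n)\cong(\omega_{\sC}^{\mathrm{log}}(-\Sigma_\ell^{\sC}))^{w(\bm_k)}$, i.e.\ with a twist by $-\Sigma_\ell^{\sC}$ (this is \eqref{phip}), not to powers of the plain $\omega^{\mathrm{log}}$ of an $(\ell-1)$-pointed curve. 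The unresolved step is the contraction: when the component carrying $\Sigma_\ell^{\sC}$ becomes unstable after the marking is removed, you assert that ``a short cohomology-and-base-change computation (or the argument of Lemma \ref{hi})'' shows $p_\ast\sL_j$ is again a line bundle with vanishing $R^1$, compatible with base change and with the monomial isomorphisms. Lemma \ref{hi} does not apply here: it is a statement about $\R\pi_\ast$ of twists along \emph{markings} of a fixed family, not about pushforward along a contraction $p:\sC\to\sC'$ of a rational component; and the invertibility of $p_\ast\sL_j$ near the resulting node (together with descent of the $\varphi_k$ and functoriality in $S$) is precisely the nontrivial content of the theorem, not something that follows from degree-zero considerations on the contracted component alone. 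As written, your Step 2 assumes what has to be proved.

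For comparison, the paper avoids doing the contraction by hand. It first introduces the auxiliary stack $\barM_{g,\ti\gamma[\ell]}(G)$, where only the de-stackification is performed and the constraint is the twisted one \eqref{phip}; this is where the hypothesis $\gamma_\ell=\jmath_\bd$ is used, via the morphism $\fv_2$. It then identifies $\barM_{g,\ti\gamma[\ell]}(G)$ (and likewise $\barM_{g,\gamma'}(G)$) with stacks of balanced twisted stable maps into $\sC_{g,\ell-1,\bm}=C_{g,\ell-1}\times_{B\Gm^n}B\Gm^n$, using $\rho^\ast\omega_{g,\ell-1}^{\mathrm{log}}=\omega_{C/S}^{\mathrm{log}}(-\Sigma_{S,\ell}^C)$, and obtains the stabilization together with the line-bundle data from the Abramovich--Vistoli forgetful morphism for twisted stable maps (\cite[Coro 9.1.3]{A-V}) combined with \cite[Thm 2.2.6]{FJR2}; flatness is then deduced from flatness of $\barM_{g,\ell}\to\barM_{g,\ell-1}$, much as in your Step 3. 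If you want to keep your direct approach, you must either supply the local analysis of $p_\ast\sL_j$ at contracted components (including base change in families) or, as the paper does, reduce to the stable-maps formalism where this is already established.
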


Let $\ti\gamma=(\gamma_1,\cdots,\gamma_{\ell-1},1)$, where $1$ is the trivial representation $\{1\}\to G$.
Before proving the Theorem, we construct an auxiliary stack $\barM_{g,\ti\gamma[\ell]}(G)$ that is the groupoid of all families
$(\sC,\Sigma_i^{\sC}, \sL_j,\varphi_k)$ obeying all requirements in Definition \ref{W-curves} and \ref{def-2.7}
except that $\gamma$ is replaced by $\ti\gamma$ and (\ref{Ow}) is replaced by
\beq\label{phip}
\phi_k\colon
\bm_k(\sL_1, \ldots, \sL_n)\mapright{\cong}(\omega_{\sC}^{\text{log}}(-\Sigma_\ell^{\cC}))^{w(\bm_k)}  ,\quad k=1,\cdots,n.  
\eeq
(Note that $\ti\gamma_\ell$ is trivial implies that $\Sigma_\ell^{\cC}\sub\cC$ is non-stacky.)

\begin{lemm}
Let the notation be as stated. Then $\barM_{g,\ti\gamma[\ell]}(G)$ is a smooth proper DM stack over $\CC$.
\end{lemm}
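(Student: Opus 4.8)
The plan is to reduce the statement to known properties of the moduli of $G$-spin curves. First I would observe that the only new feature of $\barM_{g,\ti\gamma[\ell]}(G)$ compared with $\barM_{g,\gamma}(G)$ is the twist of the isomorphisms \eqref{phip} by $(-\Sigma_\ell^\sC)$ at the non-stacky last marking, together with the requirement that $\ti\gamma_\ell$ is trivial. So I would construct a morphism identifying $\barM_{g,\ti\gamma[\ell]}(G)$ with a locus inside an ordinary $G$-spin moduli stack with a different weight datum, or more directly, realize it as an open and closed substack of $\barM_{g,\ell}(G')$ for a suitable $(d,\bd)$-group $G'$. Concretely, using the formula $\omega_\sC\ulog(-\Sigma_\ell^\sC) = \omega_\sC(\sum_{i<\ell}\Sigma_i^\sC)$, the constraints \eqref{phip} are exactly the $G$-spin constraints for the pointed curve with $\ell-1$ markings $\Sigma_i^\sC$ ($i<\ell$) carrying the band $\gamma'=(\gamma_i)_{i=1}^{\ell-1}$ plus one extra \emph{untwisted} marked point $\Sigma_\ell^\sC$. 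That is, $\barM_{g,\ti\gamma[\ell]}(G)$ is canonically isomorphic to the fiber product of $\barM_{g,\gamma'}(G)$ (the $G$-spin moduli with $\ell-1$ markings) with the universal curve $\cC \to \barM_{g,\gamma'}(G)$ restricted to its non-stacky (coarse) locus, i.e.\ $\cC_{\rm ord}$; the extra data of $\Sigma_\ell^\sC$ is precisely a section of $\cC \to \barM_{g,\gamma'}(G)$ landing in the smooth non-stacky locus, and stability forces the stabilization condition.

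With this identification in hand, smoothness of $\barM_{g,\ti\gamma[\ell]}(G)$ follows from smoothness of $\barM_{g,\gamma'}(G)$ (established in the excerpt, since $\barM_{g,\ell}(G)$ is a smooth proper DM stack with projective coarse moduli and $\barM_{g,\gamma'}(G)$ is an open-closed substack) together with smoothness of the universal curve over it; the added marked point is a smooth morphism of relative dimension one, so the total space remains smooth. The DM property is inherited since adding a section imposes no new automorphisms beyond those already present, and finite-typeness is clear. For properness I would use the valuative criterion: a family over the punctured disc of such data extends, after a finite base change, to a family of stable $\ti\gamma$-banded $G$-spin curves with the marked section extended --- this is because the marking is allowed to collide with nodes/other markings only after stabilization, exactly as in the proof that $\barM_{g,\ell}$ is proper; the limit of $\Sigma_\ell^\sC$ is recovered by the separatedness and properness of the universal stable curve over $\barM_{g,\gamma'}(G)$, then one stabilizes. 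Equivalently, properness follows from the valuative criterion applied to the forgetful morphism $\barM_{g,\ti\gamma[\ell]}(G) \to \barM_{g,\gamma'}(G)$, whose fibers are (coarse) stable pointed curves, hence proper, and $\barM_{g,\gamma'}(G)$ is proper.

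The main obstacle I anticipate is bookkeeping around the boundary: when the would-be last marked point $\Sigma_\ell^\sC$ limits onto a node or another marked point, one must check that the twist $(-\Sigma_\ell^\sC)$ in \eqref{phip} behaves compatibly with the stabilization (contraction of an unstable rational bridge), and that the resulting limit still satisfies \eqref{phip} for the stabilized curve with the band $\ti\gamma$. This is the same phenomenon handled in the construction of the forgetful morphism in GW theory and in \cite{FJR2}, and it is essentially formal once one notes that $\omega\ulog_{\sC}(-\Sigma_\ell^\sC)$ is the pullback of $\omega\ulog_{\sC'}$ under the contraction $\sC \to \sC'$ on the relevant components; the genus-$0$ contracted component contributes trivially. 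I would therefore structure the proof as: (i) exhibit the canonical isomorphism with $\cC_{\rm ord}$ over $\barM_{g,\gamma'}(G)$ using the identity $\omega\ulog_\sC(-\Sigma_\ell^\sC)=\omega_\sC(\sum_{i<\ell}\Sigma_i^\sC)$; (ii) deduce smoothness and the DM/finite-type properties from that of $\barM_{g,\gamma'}(G)$ and its universal curve; (iii) prove properness via the valuative criterion relative to $\barM_{g,\gamma'}(G)$, invoking properness of the universal stable curve and stabilization.
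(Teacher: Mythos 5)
The central identification in your step (i) is not correct, and the error propagates. The locus you describe---sections of the universal curve landing in $\sC_{\rm ord}$ over $\barM_{g,\gamma'}(G)$---is only the open part of $\barM_{g,\ti\gamma[\ell]}(G)$ where forgetting $\Sigma_\ell^\sC$ leaves the $(\ell-1)$-pointed curve stable. It omits exactly the boundary configurations in which $\sC$ carries a rational component containing $\Sigma_\ell^\sC$ and two other special points (a bubble over a node, or over one of the stacky markings $\Sigma_i^\sC$, $i<\ell$), which is contracted when $\Sigma_\ell^\sC$ is forgotten; these are genuine points of $\barM_{g,\ti\gamma[\ell]}(G)$, since with $\Sigma_\ell^\sC$ present such a component has three special points. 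Since $\cC_{\rm ord}$ is an open, non-proper substack of the universal curve, if your canonical isomorphism held the lemma would be false; what you must prove proper is a twisted analogue of the \emph{full} universal curve, with fibers over nodes and over the stacky markings. This also breaks the smoothness argument: the morphism ``add a section'' is not smooth of relative dimension one along nodal fibers or at the bubbled configurations, so smoothness of the total space needs a separate argument there.

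The properness step then carries the real gap. When the limit of $\Sigma_\ell^\sC$ over the punctured disc hits a node or a marking, the needed operation is not stabilization but a modification of the limit family (inserting a rational bridge, which when the collision is with $\Sigma_i^\sC$ must be attached along a stacky node banded by $\gamma_i$), followed by an extension of the invertible sheaves $\sL_j$ and of the isomorphisms \eqref{phip} across the new component; moreover, extending the curve and the section does not by itself extend the spin data $(\sL_j,\phi_k)$ over the central fiber. None of this is supplied by ``it is essentially formal,'' and it is precisely the non-formal content of the lemma. The paper avoids these boundary constructions altogether: it encodes $(\sL_j,\phi_k)$ as a balanced twisted stable map $F(\xi):\sC\to \sC_{g,\ell-1,\bm}= C_{g,\ell-1}\times_{B\Gm^n}B\Gm^n$ of fixed fiber class, identifies $\barM_{g,\ti\gamma[\ell]}(G)$ with the moduli stack $\sK^{bal}_{g,\ti\gamma}(\sC_{g,\ell-1,\bm},F)$ of such maps, and quotes the known smooth/proper/DM properties of twisted stable map spaces from \cite{A-V,AJ,FJR2}. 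To make your route work you would have to reprove those extension statements across bubbling by hand, which is essentially redoing that machinery.
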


\begin{proof}
Following \cite[Thm 2.2.6]{FJR2} and \cite[Sect 1.5]{AJ}, we will transform its construction to the existence of moduli
of twisted stable morphisms. Recall that given a twisted curve $\sC$, a line bundle $\sL$ on $\sC$ induces a morphism
$[\sL]: \sC\to B\Gm$. Thus given an $S$-family $\xi=(\sC,\Sigma_i^\sC,\sL_j,\phi_k)\in \barM_{g,\ti\gamma[\ell]}(G)(S)$, the
$n$-tuple $(\sL_j)_{j=1}^n$ defines a morphism $\tau(\xi)$ as shown in the following diagram.
Let $(C,\Sigma_i^C)$ be the coarse moduli of $(\sC,\Sigma_i^\sC)$. Abbreviating $\omega_{S,\ell}=\omega_{C/S}^{log}(-\Sigma_{S,\ell}^C)$, and $w_k=w(\bm_k)$, the $n$-tuple of line bundles $(\omega_{S,\ell}^{w_k})_{k=1}^n$ induces a morphism $B(\omega_{S,\ell}^{w_\cdot})$ as shown
below. Further, let $\bm: \Gm^n\to\Gm^n$ be the homomorphism defined via $t=(t_1,\cdots, t_n)\mapsto (\bm_1(t),\cdots, \bm_n(t))$,
and $B\bm: B\Gm^n\to B\Gm^n$ be its induced morphism. Then the isomorphisms $\phi_1,\cdots,\phi_n$  in \eqref{phip} fit into the
(left hand side) commutative square:
$$
\begin{CD}
\sC @>>> C @>{\rho}>> C_{g,\ell-1}\\
@VV{\tau(\xi)}V @VV{B(\omega_{S,\ell}^{w_\cdot})}V @VV{B(\omega_{g,\ell-1}^{w_\cdot})}V\\
B\Gm^n @>{B\bm}>> B\Gm^n @= B\Gm^n
\end{CD}
$$
The right hand side square is constructed as follows. Let $(C,\Sigma_{i<\ell}^C)\to (C,\Sigma_{i<\ell}^C)^{st}$ be the
stabilization of the family of $(\ell-1)$-pointed curves $(C,\Sigma_{i<\ell}^C)$; let $(C,\Sigma_{i<\ell}^C)^{st}\to (C_{g,\ell-1}, \Sigma_i^{C_{g,\ell-1}})$
be the tautological morphism to the universal family 
of the moduli of $(\ell-1)$-pointed genus $g$ stable curves. 
We let $\rho: C\to C_{g,\ell-1}$ be the composite of the stabilization and the tautological morphisms mentioned.
Let $\omega_{g,\ell-1}=\omega_{C_{g,\ell-1}/\barM_{g,\ell-1}}^{log}$.
Because $\rho\sta\omega_{g,\ell-1}=\omega_{S,\ell}$, 
the morphism $B(\omega_{g,\ell-1}^{w_\cdot})$ induced by the $n$-tuple of line bundles $(\omega_{g,\ell-1}^{w_k})^n_{k=1}$
makes the right hand side square commutative.

In conclusion, the two squares define a morphism
\beq\label{Fxi}
F(\xi): 
\sC\lra \sC_{g,\ell-1,\bm}\defeq C_{g,\ell-1}\times_{B\Gm^n}B\Gm^n
\eeq
such that\\
(1) $F(\xi)$ is an $S$-family of balanced $\ell$-pointed genus $g$  twisted stable morphism with the fundamental class to be the fiber class of $C_{g,\ell-1}/\barM_{g,\ell-1}$; \\
(2) $F(\xi)(\Sigma_i^{\sC})$ lies in the sector  $\Sigma_i^{\sC_{g,\ell-1}}\defeq \Sigma_i^{C_{g,\ell-1}}\times_{B\Gm^n} B\Gm^n$
given by the representation $\gamma_i$ for $1\le i\le \ell-1$;\\
(3) the last marked section $\Sigma_\ell^\sC\sub \sC$ is non-stacky.
  
We let $\sK^{bal}_{g,\ti\gamma}(\sC_{g,\ell-1,\bm},F)$ be the groupoid  of balanced $\ell$-pointed genus $g$ twisted stable morphisms
to $\sC_{g,\ell-1,\bm}$ of fundamental classes $F$ satisfying conditions (2) and (3) above. 
Following the 
study in \cite{FJR2} and \cite{AJ}, the groupoid $\sK^{bal}_{g,\ti\gamma}(\sC_{g,\ell-1,\bm},F)$ forms 
a smooth proper DM stack  over $\CC$.
The above correspondence constructs a transformation of groupoid
\beq\label{eq5}
\barM_{g,\ti\gamma[\ell]}(G)\lra \sK^{bal}_{g,\ti\gamma}(\sC_{g,\ell-1,\bm},F).
\eeq

Conversely, given an $S$-family in $\sK^{bal}_{g,\ti\gamma}(\sC_{g,\ell-1,\bm},F)$, following \cite{FJR2}, 
it produces an $S$-family $\xi$ in $\barM_{g,\ti\gamma[\ell]}(G)$ whose induced $F(\xi)$ is the given $S$-family in
$\sK^{bal}_{g,\ti\gamma}(\sC_{g,\ell-1,\bm},F)$. This shows that this transformation is an equivalence. This proves that
$\barM_{g,\ti\gamma[\ell]}(G)$ is a smooth proper DM stack  over $\CC$.
\end{proof}

\begin{proof}[Proof of Theorem \ref{fgtail}]
Let $\sK^{bal}_{g,\gamma'}(\sC_{g,\ell-1,\bm},F)$ be the moduli of $(\ell-1)$-pointed genus $g$ balanced twisted stable curves
to $\sC_{g,\ell-1,\bm}$ of fundamental classes $F$ that satisfy the requirement (2) above. It is a smooth proper DM stack  over $\CC$
(\cite[Thm 2.2.6]{FJR2}). Let
$$\ff: \sK^{bal}_{g,\ti\gamma}(\sC_{g,\ell-1,\bm},F)\lra \sK^{bal}_{g,\gamma'}(\sC_{g,\ell-1,\bm},F)
$$
be the forgetful morphism (forgetting the $\ell$-th marked points followed by stabilization) in \cite[Coro 9.1.3]{A-V}. 
By the construction in \cite{FJR2}, canonically $\sK^{bal}_{g,\gamma'}(\sC_{g,\ell-1,\bm},F)\cong \barM_{g,\gamma'}(G)$. 
Combined with the isomorphism \eqref{eq5}, we obtain the morphism 
$$\fv_1: \barM_{g,\ti\gamma[\ell]}(G)\to \barM_{g,\gamma'}(G).
$$

Next, we construct a morphism $\fv_2:\barM_{g,\gamma}(G)\to \barM_{g,\ti\gamma[\ell]}(G)$.  
Given any family $\xi=(\sC, \Si_i^\sC, \sL_j,\varphi_k)$ in $\barM_{g,\gamma}(G)$, we construct a new family
$\xi'=(\sC', \Si_i^{\sC'}, \sL_j',\varphi_k')$ as follows. We let $\sC'$ be $\sC$ with the (possible) stacky structure 
along $\Si_\ell^\cC$ eliminated. Let $\ft: \sC\to\sC'$ be the tautological morphism; we define
$\sL_j'=\ft\lsta \sL_j$. Because $\gamma_\ell$ is of the form $\zeta_d\mapsto \jmath_\delta$, 
one verifies that the isomorphism $\varphi_k$ induces isomorphism (\ref{phip}). 
 The forgetful morphism we aimed at is 
$$\ff_{\gamma,\ell}=\fv_1\circ\fv_2: \barM_{g,\gamma}(G)\lra \barM_{g,\gamma'}(G).
$$

Finally, one checks that the restriction of $\ff_{\gamma,\ell}$ to $\barM_{\Gamma}(G)$ factors through $\barM_{\Gamma'}(G)$. Thus
it lifts to the morphism $\ff_{\Gamma,\ell}$ desired. Also, it follows directly from the flatness of $\barM_{g,\ell}\to \barM_{g,\ell-1}$ 
that both $\ff_{\gamma,\ell}$ and $\ff_{\Gamma,\ell}$ are flat. This proves the Theorem.
\end{proof}

\subsection{Property of Witten's top Chern classes}
As $\barM_{g,\gamma}(G)^p$, we let
$\barM_\Gamma(G)^p$ be   the moduli of weakly $\Gamma$-labeled $G$-spin curves with fields.
Suppose $\Gamma$ is connected, and has $\ell$-tails and total genus $g$, then
$$\barM_\Gamma(G)^p=\barM_{\Gamma}(G)\times_{\barM_{g,\gamma}(G)}\barM_{g,\gamma}(G)^p.
$$
Following the discussion in the previous subsection, we have an induced perfect relative obstruction theory 
of $\barM_{\Gamma}(G)^p\to \barM_{\Gamma}(G)$, a $W$-induced cosection of the obstruction sheaf of 
$\barM_{\Gamma}(G)^p$ assuming all $\gamma_i$'s are narrow, and its
Witten top Chern class $[\barM_{\Gamma}(G)^p]\virt$. 

We state and prove the algebraic analogue (i.e. cycle version) of
the ``topological axioms'' stated and proved in \cite[Thm 4.1.8]{FJR2} (also cf. \cite{JKV} and \cite{Po}). 

\begin{theo}[Degeneration]\label{A}
Let $\Gamma$ be a $G$-decorated graph so that every tail $t_i$ is decorated by a narrow $\gamma_i\in G$.
Suppose $\Gamma\to \Gamma'$ is by contracting $\delta$ edges of $\Gamma$, then
$$ \tau_{\Gamma\Gamma'}^![\barM_{\Gamma}(G)^p]\virt=[\barM_{\Gamma'}(G)^p]\virt.
$$
\end{theo}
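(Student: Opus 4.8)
The plan is to reduce the statement, by an induction on the number $\delta$ of contracted edges, to the case of a single edge contraction $\Gamma\to\Gamma'$, and then to apply Lemma \ref{sub-cycle} to the Cartesian square obtained by pulling back the moduli of $G$-spin curves with fields along $\tau_{\Gamma\Gamma'}$. First I would set up the notation: write $S=\barM_\Gamma(G)$, $\cM=\barM_{\Gamma'}(G)$, and $\cXX=\barM_{\Gamma'}(G)^p$, so that by the fiber-product description $\barM_\Gamma(G)^p=\barM_\Gamma(G)\times_{\barM_{\Gamma'}(G)}\barM_{\Gamma'}(G)^p$ is exactly the fiber product $\cY=\cS\times_\cM\cXX$ of Lemma \ref{sub-cycle}. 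The morphism $\cXX\to\cM$ is representable with the relative perfect obstruction theory $\phi_{\cXX/\cM}$ carrying the $W$-induced cosection $\sigma$, whose degeneracy locus $D(\sigma)=\barM_{\Gamma'}(G)$ is proper and (by Proposition \ref{lift}, applied in this relative setting since $\gamma$ narrow) satisfies the compatibility $\sigma\circ(\Omega_\cM^\vee\to\Ob_{\cXX/\cM})=0$. The key geometric input is Lemma \ref{emb}: $\tau_{\Gamma\Gamma'}$ is a finite l.c.i.\ morphism, hence proper and representable of constant codimension, so the hypotheses of Lemma \ref{sub-cycle} are met verbatim. Lemma \ref{sub-cycle} then yields $\tau_{\Gamma\Gamma'}^![\cXX]\virt=[\cY]\virt$, i.e.\ $\tau_{\Gamma\Gamma'}^![\barM_{\Gamma'}(G)^p]\virt=[\barM_\Gamma(G)^p]\virt$.

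The one point that needs genuine verification, and which I expect to be the main obstacle, is that the cosection $\sigma'$ on $\barM_\Gamma(G)^p$ obtained by pulling back $\sigma$ along $g\colon\cY\to\cXX$ coincides with the cosection that was constructed directly from $W$ on $\barM_\Gamma(G)^p$ in the paragraph preceding this theorem. This is needed so that $[\cY]\virt$ in Lemma \ref{sub-cycle}, which is defined using the pulled-back cosection, is literally the Witten class $[\barM_\Gamma(G)^p]\virt$ appearing in the statement. I would check this by tracing through the evaluation-map construction of $\sigma^\bullet$ from \S3.3: the universal curve, the universal sections $\rho_j$, the isomorphisms $\Phi_a$, and the morphisms $h$, $\ee$ all pull back compatibly under $\cC_{\barM_\Gamma(G)^p}\to\cC_{\barM_{\Gamma'}(G)^p}$ (these are fiber products over the relevant moduli of twisted curves), and $\sigma^\bullet$ is defined by applying $\mathbf R\pi_\ast$ to a morphism of cotangent complexes that is itself a pullback; hence $H^1$ of the pullback equals the pullback of $H^1$, which is the claim. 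This is routine but must be stated.

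Concretely the proof would read: by induction it suffices to treat $\delta=1$. Apply Lemma \ref{sub-cycle} with $\cM=\barM_{\Gamma'}(G)$, $\cS=\barM_\Gamma(G)$, $\iota=\tau_{\Gamma\Gamma'}$ (a finite l.c.i.\ morphism by Lemma \ref{emb}), $\cXX=\barM_{\Gamma'}(G)^p$ with its relative obstruction theory and cosection $\sigma$ as constructed in \S3–\S4, and $\cY=\barM_\Gamma(G)^p$. The hypothesis that $\sigma$ kills the image of $\Omega_{\barM_{\Gamma'}(G)}^\vee$ is Proposition \ref{lift} in the weakly $\Gamma'$-labeled setting; properness of $D(\sigma)=\barM_{\Gamma'}(G)$ gives properness of $D(\sigma')=\barM_\Gamma(G)$ since $\iota$ is proper. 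By the compatibility of cosections noted above, the pulled-back cosection $\sigma'$ equals the $W$-induced cosection on $\barM_\Gamma(G)^p$, so $[\cY]\virt=[\barM_\Gamma(G)^p]\virt$. Lemma \ref{sub-cycle} then gives $\tau_{\Gamma\Gamma'}^![\barM_{\Gamma'}(G)^p]\virt=[\barM_\Gamma(G)^p]\virt$, which is the assertion for $\delta=1$; the general case follows since $\tau_{\Gamma\Gamma'}^!$ for a multi-edge contraction factors as a composition of single-edge Gysin maps and the virtual classes at each intermediate stage agree by the inductive hypothesis.
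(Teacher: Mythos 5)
Your proposal takes essentially the same route as the paper: the paper's proof of this theorem is a one-line application of Lemma \ref{sub-cycle} to the finite l.c.i.\ morphism $\tau_{\Gamma\Gamma'}$ (Lemma \ref{emb}), and your added verifications (identification of the pulled-back cosection with the $W$-induced one on $\barM_\Gamma(G)^p$, properness of the degeneracy locus, the harmless reduction to a single edge) are exactly the routine points left implicit there. Note only that, as your final formula in effect records, the Gysin map $\tau_{\Gamma\Gamma'}^!$ carries the class of $\barM_{\Gamma'}(G)^p$ to that of $\barM_{\Gamma}(G)^p$, so the displayed identity in the theorem is to be read with the roles of $\Gamma$ and $\Gamma'$ as in your conclusion $\tau_{\Gamma\Gamma'}^![\barM_{\Gamma'}(G)^p]\virt=[\barM_{\Gamma}(G)^p]\virt$.
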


\begin{proof}
The identity follows from applying Lemma \ref{sub-cycle} to $\tau_{\Gamma\Gamma'}$.  
\end{proof}


\begin{theo}[Disjoint union of graphs]
Let the notation be as in Theorem \ref{A}.
Suppose $\Gamma$ is a disjoint union of $\Gamma_i$, $i=1,\cdots,k$.
Then
$$[\barM_\Gamma(G)^p]\virt=[\barM_{\Gamma_1}(G)^p]\virt \times\cdots\times[\barM_{\Gamma_d}(G)^p]\virt.
$$
\end{theo}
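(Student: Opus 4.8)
The plan is to reduce the statement to the compatibility of all the structures involved (moduli stack, perfect obstruction theory, cosection) with taking products. First I would observe that for a disjoint union $\Gamma=\coprod_{i=1}^k\Gamma_i$ of $G$-decorated graphs, a weakly $\Gamma$-labeled $G$-spin curve with fields is precisely a tuple of weakly $\Gamma_i$-labeled $G$-spin curves with fields, one for each connected component. This gives a canonical isomorphism of stacks
$$\barM_\Gamma(G)^p\cong \barM_{\Gamma_1}(G)^p\times_\CC\cdots\times_\CC\barM_{\Gamma_k}(G)^p,$$
and likewise $\barM_\Gamma(G)\cong\prod_i\barM_{\Gamma_i}(G)$, compatibly with the forgetful morphisms. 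This reduces, by induction on $k$, to the case $k=2$.

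Next I would check that the relative perfect obstruction theory $\phi_{\barM_\Gamma(G)^p/\barM_\Gamma(G)}$ is the ``box product'' of the two factors. This is because $\cE_\Gamma$ on the universal curve $\cC_\Gamma=\cC_{\Gamma_1}\amalg\cC_{\Gamma_2}$ (pulled back appropriately) restricts to $\cE_{\Gamma_i}$ on each piece, so
$$E\bul_{\barM_\Gamma(G)^p/\barM_\Gamma(G)}\;\cong\;\mathrm{pr}_1\sta E\bul_{\barM_{\Gamma_1}(G)^p/\barM_{\Gamma_1}(G)}\;\boxplus\;\mathrm{pr}_2\sta E\bul_{\barM_{\Gamma_2}(G)^p/\barM_{\Gamma_2}(G)},$$
and correspondingly the intrinsic normal cone and the virtual normal cone split as products, $\bC_{\barM_\Gamma(G)^p/\barM_\Gamma(G)}=\bC_1\times\bC_2$. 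Then I would verify that the $W$-induced cosection is additive under this decomposition: since the defining formula \eqref{sigma} for $\sigma$ is a sum over irreducible components of the curve, and the curve for $\Gamma$ is the disjoint union of those for $\Gamma_1$ and $\Gamma_2$, one has $\sigma_\Gamma=\mathrm{pr}_1\sta\sigma_{\Gamma_1}\boxplus\mathrm{pr}_2\sta\sigma_{\Gamma_2}$ under $\Ob_{\barM_\Gamma(G)^p/\barM_\Gamma(G)}|=\mathrm{pr}_1\sta\Ob_1\oplus\mathrm{pr}_2\sta\Ob_2$ (using $H^1$ of a disjoint union splits as a direct sum, and $H^1(\cC,\omega_\cC)=\CC$ splits as $\CC\oplus\CC$ summing the two components' contributions, then added). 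Consequently the degeneracy locus splits: $D(\sigma_\Gamma)=D(\sigma_{\Gamma_1})\times D(\sigma_{\Gamma_2})=\barM_{\Gamma_1}(G)\times\barM_{\Gamma_2}(G)$, which is proper.

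Finally I would invoke the multiplicativity of the cosection-localized Gysin map. This is the analogue for the localized $0^!_{\sigma,\loc}$ of the standard product formula for virtual cycles: given two DM stacks with relative perfect obstruction theories and cosections with proper degeneracy loci, the localized virtual class of the product with the boxed cosection is the exterior product of the localized virtual classes. This is proved in \cite{KL} (or follows from their construction, since $0^!_{\sigma,\loc}$ is built from the intrinsic normal cone, which is multiplicative, and the localized Gysin map of the bundle stack, which respects products of kernel-stacks under $\ti\sigma=\sigma_1\boxplus\sigma_2$). Applying it to $\barM_\Gamma(G)^p\to\barM_\Gamma(G)$ gives
$$[\barM_\Gamma(G)^p]\virt=0^!_{\sigma_\Gamma,\loc}[\bC_1\times\bC_2]=0^!_{\sigma_{\Gamma_1},\loc}[\bC_1]\times 0^!_{\sigma_{\Gamma_2},\loc}[\bC_2]=[\barM_{\Gamma_1}(G)^p]\virt\times[\barM_{\Gamma_2}(G)^p]\virt,$$
and induction finishes the $k$-fold case. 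The main obstacle I expect is making the cosection-compatibility precise: one must carefully track how the Serre-duality identification $R^1\pi\lsta\omega_{\cC/S}\cong\sO_S$ behaves for a disconnected curve, and confirm that the sum defining $\sigma$ really does decompose as the \emph{box sum} (not merely a sum pulled back from one factor) so that the degeneracy locus is exactly the product; granting that, the rest is the product formula for localized virtual cycles, which is routine given \cite{KL}.
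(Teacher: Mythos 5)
Your proposal is correct and follows essentially the same route as the paper: the paper's entire proof is the one-line observation that the canonical isomorphism $\barM_{\Gamma}(G)\cong\prod_i\barM_{\Gamma_i}(G)$, induced by disjoint union of spin curves and sections, is compatible with the construction of the virtual cycles. Your write-up simply spells out that compatibility (box product of obstruction theories, box sum of cosections with product degeneracy locus, multiplicativity of the localized Gysin map), which is the content the paper leaves implicit.
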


\begin{proof}
The canonical isomorphism $\barM_{\Gamma}( G)\cong \prod_{i=1}^d\barM_{\Gamma_i}(G)$ 
is induced by disjoint union of spin curves and sections, thus compatible with the construction of virtual cycles.
\end{proof}

Let the notation be as in Theorem \ref{A}. Let $e\in E(\Gamma)$ be such  that its decoration $\gamma_e: \mu_e\to G$ is narrow, and 
let $\Gamma_e$ be the graph obtained by breaking up $e$ into two tails $t_{e,-}$ and $t_{e,+}$, decorated with $\gamma_e\upmo$ and 
$\gamma_e$ (see convention after \eqref{representable}). 
In case $e$ has two distinct vertices, we define $\barM_\Gamma(G)_{e}\colon =\barM_\Gamma(G)$; otherwise, 
we form the double cover $\barM_\Gamma(G)_{e}\to \barM_\Gamma(G)$ whose closed points 
are data $(\sC, \Sigma_i^\sC, \sL_j,\hat\sC_{q+})$ so that $(\sC, \Sigma_i^\sC, \sL_j)$ are closed points in 
$\barM_\Gamma(G)$, $q\in \sC$ are nodes associated with $e\in E(\Gamma)$ and $\hat\sC_{q+}$ are
branches of the formal completions $\hat\sC_q$. We form
$$\barM_\Gamma(G)^p_{e}= \barM_\Gamma(G)^p\times_{ \barM_\Gamma(G)} \barM_\Gamma(G)_{e}.
$$
We now construct the gluing morphisms $\eta$ and $\gg$ fitting into the following two commutative squares:
\beq\label{aaa}
\begin{CD}
\barM_{\Gamma}(G)^p_e @< {\gg} << \barM_{\Gamma}(G)_e\times_{\barM_{\Gamma_e}(G)}\barM_{\Gamma_e}(G)^p
@>{}>> \barM_{\Gamma_e}(G)^p \\
@VVV @VVV @VVV\\
\barM_{\Gamma}(G)_e @= \barM_{\Gamma}(G)_e @>{\eta}>> \barM_{\Gamma_e}(G)
\end{CD}
\eeq

We construct $\eta$. By Definition \ref{4.3}, any $S$-family $\xi=(\sC,\Sigma_i^{\sC}, \sL_j,\varphi_k)$
in $\barM_\Gamma(G)(S)$ comes from gluing an $S$-family of $G$-spin curves $\ti\xi$ 
coarsely labeled by $\Gamma_{sp}$. As $(\Gamma_e)_{sp}=\Gamma_{sp}$, the family $\ti\xi$
glue to the family $\xi'=(\sC',\Sigma_i^{\sC'}, \sL'_j,\varphi'_k)$ in $\barM_{\Gamma_e}(G)(S)$. 
This transformation $\xi\mapsto \xi'$ defines the morphism $\eta$.

To define  $\gg$, we note that any $S$-family in the fiber-product consists of a triple $(\xi,\xi', (\rho_j'))$,
where $\xi\in \barM_{\Gamma}(G)_e(S)$, $\xi'$ and $\rho_j'\in \oplus_j \Gamma(\sL_j')$, etc., are as mentioned. Because $\gamma_e$ is narrow,
each section $\rho_j'$ vanishes along the marked sections (labeled) by $t_{e-}$ and $t_{e+}$.
In particulat, $\rho_j'$ glues to a global section $\rho_j\in \Gamma(\sL_j)$. The transformation
$(\xi,\xi', (\rho_j'))\mapsto (\xi, (\rho_j))$ defines the morphism $\gg$.

\begin{theo}[Compsition Law] Let the notation be as in Theorem \ref{A}; let $e\in E(\Gamma)$ be such  that its decoration 
$\gamma_e: \mu_e\to G$ is narrow, and 
let $\Gamma_e$ be as constructed. Then the morphisms  $\gg$  is an isomorphism
and $\eta$ is a $G$-torsor (cf. \eqref{aaa}). Further,
\beq\label{com}
   [\barM_{\Gamma}(G)^p_{e}]\virt = \eta^!  [\barM_{\Gamma_e}( G)^p]\virt.  
\eeq  
\end{theo}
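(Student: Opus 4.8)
The plan is to prove the three assertions --- that $\gg$ is an isomorphism, that $\eta$ is a $G$-torsor, and the identity \eqref{com} --- in that logical order, the cycle identity following formally from Lemma \ref{sub-cycle} once the first two are in place. I would begin with the claim that $\eta$ is a $G$-torsor. Fix a point of $\barM_{\Gamma_e}(G)$ with curve $\sC'$, markings $t_{e-},t_{e+}$ of monodromies $\gamma_e\upmo$ and $\gamma_e$, and line bundles $\sL'_j$; a lift of it to $\barM_\Gamma(G)_e$ is a way of gluing $t_{e+}$ to $t_{e-}$ into a balanced node so that the $\sL'_j$ and the isomorphisms $\varphi'_k$ descend to the glued curve. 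The underlying gluings of $\oplus_j\sL'_j$ form a torsor under $\prod_{j=1}^n\mathrm{Isom}_{\mu_r}\!\big(\sL'_j|_{t_{e-}},(\sL'_j|_{t_{e+}})\dual\big)$, a $\Gm^n$-torsor on which $G\le\Gm^n$ acts by componentwise scaling, and the requirement that the $\varphi'_k$ descend says precisely that for each $k$ the induced gluing of $\bm_k(\sL'_\cdot)$ matches the canonical residue trivialization of $(\omega_{\sC'}^{\text{log}})^{w(\bm_k)}$ at $t_{e\pm}$. Since the $\bm_k$ generate $\Lambda_G$ and $G=\{x\in\Gm^n\mid\bm(x)=1\text{ for all }\bm\in\Lambda_G\}$, the locus of such matchings is a $G$-torsor (it is non-empty because $\sC$ can always be formed); this is the algebraic counterpart of the gluing analysis in \cite[\S 2.2.3]{FJR2}, and the self-node case follows identically after passing to the branch double cover defining $\barM_\Gamma(G)_e$. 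In particular $\eta$ is finite \'etale, hence flat, proper and representable.

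Next I would show $\gg$ is an isomorphism. Over a point $\xi$ of $\barM_\Gamma(G)_e$, a point of the middle stack of \eqref{aaa} is a tuple $(\rho'_j)$, $\rho'_j\in\Gamma(\sC',\sL'_j)$, on the curve $\sC'$ normalized at $q_e$, while a point of $\barM_\Gamma(G)^p_e$ over $\xi$ is a tuple $(\rho_j)$, $\rho_j\in\Gamma(\sC,\sL_j)$, on the glued curve $\sC$; write $\nu\colon\sC'\to\sC$ for the normalization. Because $\gamma_e$ is narrow, Lemma \ref{hi} applied at $t_{e+}$ and at $t_{e-}$ gives $\Gamma(\sC',\sL'_j)=\Gamma(\sC',\sL'_j(-t_{e+}-t_{e-}))$, so every $\rho'_j$ vanishes along $t_{e\pm}$ and descends uniquely through $\nu$ to a section $\rho_j$ of $\sL_j$ on $\sC$; conversely any $\rho_j$ restricts along $\nu$ to such a tuple. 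These mutually inverse assignments are the object-level description of $\gg$ and $\gg\upmo$, compatible with arrows and with the projections to $\barM_\Gamma(G)_e$, so $\gg$ is an isomorphism.

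For the cycle identity I apply Lemma \ref{sub-cycle} to the right-hand Cartesian square of \eqref{aaa}, with $\iota=\eta$ (flat, proper, representable), $\cXX=\barM_{\Gamma_e}(G)^p$ and $\cM=\barM_{\Gamma_e}(G)$: this gives $\eta^![\barM_{\Gamma_e}(G)^p]\virt=[\cY]\virt$, where $\cY$ is the middle stack of \eqref{aaa} equipped with the pulled-back relative obstruction theory and cosection. It then remains to transport $[\cY]\virt$ across $\gg$. As $\gg$ is an isomorphism over $\barM_\Gamma(G)_e$ the two relative intrinsic normal cones coincide, so I must match relative obstruction theories and cosections. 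For the obstruction theories: over $\barM_\Gamma(G)_e$ the sequence $0\to\sL_j(-\Sigma_\sC)\to\nu_\ast\big(\sL'_j(-\Sigma_{\sC'}+t_{e+}+t_{e-})\big)\to(\sL_j(-\Sigma_\sC))|_{q_e}\to 0$ has skyscraper term supported on $[q_e/\mu_r]$ with the narrow (hence nontrivial) action $\gamma_{e,j}$, so $\R\pi_\ast$ of it vanishes, and together with Lemma \ref{hi} at $t_{e\pm}$ this produces a canonical isomorphism $\R\pi_\ast(\sL_j(-\Sigma_\sC))\cong\R\pi'_\ast(\sL'_j(-\Sigma_{\sC'}))$ of the two relative obstruction complexes, compatible with the obstruction-theory morphisms by functoriality of the construction $C(\pi_{M\ast}\cE_M)$ of \cite{CL} (via Proposition \ref{temp}). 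For the cosections: under $\gg$ one has $\rho\leftrightarrow\rho'$, hence $W_a(\rho)_j\leftrightarrow W_a(\rho')_j$ and $\dot\rho_j\leftrightarrow\dot\rho'_j$, while the residue sequence identifies $H^1(\sC,\omega_{\sC})$ with $H^1(\sC',\omega_{\sC'})$ compatibly with the two trace isomorphisms onto $\sO$, so formula \eqref{sigma} yields the same cosection on both sides. Hence $[\barM_\Gamma(G)^p_e]\virt=[\cY]\virt=\eta^![\barM_{\Gamma_e}(G)^p]\virt$.

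The step I expect to be the main obstacle is the torsor analysis for $\eta$: bookkeeping the $\mu_r$-equivariant gluing data for the $\sL'_j$ and the residue normalization of $(\omega_{\sC'}^{\text{log}})^{w(\bm_k)}$ at the node carefully enough that the structure group of the gluing torsor is literally $G$, and not merely a finite extension. The companion subtlety --- matching the $W$-cosections under $\gg$, resting on the compatibility of the two Serre-duality trace isomorphisms under normalization at a narrow node --- is of the same flavour and again uses narrowness through Lemma \ref{hi}; once both are in hand, \eqref{com} is a formal consequence of Lemma \ref{sub-cycle}.
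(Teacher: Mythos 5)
Your proposal is correct and follows essentially the same route as the paper: the torsor property of $\eta$ via the characterization $G=\{x\in\Gm^n\mid\bm_k(x)=1\}$ applied to the $\Gm^n$-torsor of gluing data, the isomorphism $\gg$ via narrowness and Lemma \ref{hi}, and the cycle identity by applying Lemma \ref{sub-cycle} to the \'etale $\eta$ and then matching obstruction theories and cosections across $\gg$ using the normalization sequence \eqref{gluing} (the paper packages your middle stack as $C(\pi'_{\Gamma\ast}\cE'_\Gamma)$, but the argument is the same). The only cosmetic slip is the phrase ``identifies $H^1(\sC,\omega_\sC)$ with $H^1(\sC',\omega_{\sC'})$'' (these differ when gluing merges components); what is needed, and what holds, is compatibility of the two trace maps to $\sO$, which is also all the paper uses.
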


\begin{proof}
We first prove that $\eta$ is a $G$-torsor. First, by the construction of the transformation $\xi\mapsto \xi'$,
we see that the morphism $\eta$ is \'etale and finite. We next construct
a $G$-action on $\barM_\Gamma(G)_e$. For simplicity, we assume that $e$ has two distinct vertices.
Let $\xi=(\sC,\Sigma_i^{\sC}, \sL_j,\varphi_k)\in \barM_\gamma(G)(\CC)$ and let $\eta(\xi)=(\sC',\Sigma_i^{\sC'}, \sL'_j,\varphi'_k)\in
\barM_{\Gamma_e}(G)(\CC)$. Let $f: \sC'\to\sC$ be the gluing morphism along the node $q\in \sC$; namely, $f\upmo(q)=\{q_+, q_-\}$ with 
$\hat\sC_{q+}\cong \hat\sC'_{q_+}$ under $f$.
By definition, the sheaves
$\sL_j$ and $\sL_j'$ fit into the  (gluing)  exact sequence
 \beq\label{gluing} 0\lra \sL_j\lra f\lsta \sL_j'\mapright{(a_j,b_j)} \sO_q\lra 0,
\eeq 
where $a_j$ (reps. $b_j$) factors through 
$f\lsta \sL_j'\toright{a_j'} \sL_j'\otimes_{\sO_{\sC}}\sO_{\hat \sC_{q+}}\toright{a_j^{\prime\prime}}\sO_q$ (resp. $+$ replaced by $-$).
Here we used the tautological isomorphism $\hat\sC_{q\pm}\cong \hat \sC_{q_\pm}'$.

For $c\in\Gm$, we define $a_j^{c}\defeq (c\cdot a_j^{\prime\prime})\circ a_j'$, and define
\beq\label{glueL}
\sL_j^{c}=\ker\{ (a_j^{c},b_j): f\lsta\sL'_j\lra \sO_q\}.
\eeq
By lifting to an \'etale covering of $\sC$, we see that $\sL_j^c$ is an invertible sheaf on $\sC$.

We now investigate the choices of $c=(c_1,\cdots, c_n)\in \Gm^n$ so that 
\beq\label{phic}
\varphi_k|_{\sC-q}:
\bm_k(\sL_1^{c_1}, \ldots, \sL^{c_n}_n)|_{\sC-q}\mapright{\cong}(\omega_{\sC}^{\text{log}})^{w(\bm_k)}|_{\sC-q}, \quad k=1,\cdots,n
\eeq
extend to isomorphisms over $\sC$. 
Because $\varphi_k:\bm_k(\sL_1, \ldots, \sL_n)\to(\omega_{\sC}^{\text{log}})^{w(\bm_k)}$ are isomorphisms, by the construction of 
$\sL_j^{c_j}$, we see that \eqref{phic} extend if and only if $\bm_k(\sL_1|_q,\cdots,\sL_n|_q)\equiv \bm_k(c_1\sL_1|_q,\cdots,c_n\sL_n|_q)$,
which hold if and only if $\bm_k(c_1,\cdots, c_n)=1$ for all $k$. By the definition of $G$, we conclude that \eqref{phic} extend
to isomorphisms if and only if $c\in G$.

Conversely, we  claim  that if $\bar\xi=(\sC,\Sigma_i^{\sC}, \bar\sL_j,\bar\varphi_k)\in \eta\upmo(\eta(\xi))$, 
then $\bar\sL_j=\sL_j^{c_j}$ for a $c\in G$ and $\bar\varphi_k$ are extensions of \eqref{phic}. 
Indeed, because $\eta(\xi)=\eta(\bar\xi)$, there are isomorphisms $\beta_j: \sL_j|_{\sC-q}\cong \bar\sL_j|_{\sC-q}$,
extending to $\hat \sC_{q\pm}$, so that they commute with $\varphi_k$ and $\bar\varphi_k$ for all $k$.
Therefore, there is a $c=(c_1,\cdots, c_n)\in \Gm^n$ so that $\bar\sL_j=\sL_j^{c_j}$, and 
$\beta_j$ is the identity map $\bar\sL_j|_{\sC-q}=\sL_j^{c_j}|_{\sC-q}=\sL_j|_{\sC-q}$.   Then as  $\bar\varphi_k$ are extensions of \eqref{phic},  the previous  discussion shows that  $c\in G$.   This proves that $\eta$ is a $G$-torsor.
 
Next, for any $(\xi, (\rho_j))$ in $\barM_{\Gamma}(G)^p_e$, taking $\xi'=\eta(\xi)$ and $\rho_j'=f\sta\rho_j$ defines 
$(\xi,\xi', (\rho_j'))$. This is the inverse of  $\gg$  and thus  $\gg$  is an isomorphism. This proves the first part of the Theorem.
 
We now prove the identity of virtual cycles.
Let $(\cC_\Gamma,\cL_{\Gamma,j})$ and $(\cC_{\Gamma_e},\cL_{\Gamma_e,j})$ be the universal families of $\barM_{\Gamma}( G)_e$ 
and $\barM_{\Gamma_e}(G)$, respectively. 
We form the gluing morphism $\ff$ as shown in the following commutative diagrams
\beq\label{bdia}
\begin{CD}
\cC_\Gamma @<\ff<<\cC'_{ \Gamma}\defeq \cC_{\Gamma_e}\times_{\barM_{\Gamma_e}(G)} \barM_\Gamma(G)_e
@>\eta'>> \cC_{\Gamma_e}\\
@VV{\pi_\Gamma}V@VV{ \pi'_{\Gamma}}V   @VV{\pi_{\Gamma_e}}V \\
\barM_{\Gamma}( G)_e@<{=}<< \barM_\Gamma(G)_e@>{\eta}>>  \barM_{\Gamma_e}(G).
\end{CD} \eeq
As usual, we let $\cE_{\Gamma_e}=\oplus_j \cL_{\Gamma_e,j}$ and $\cE_\Gamma=\oplus_j \cL_{\Gamma,j}$.  Let $ \sE'_{\Gamma}=\eta^{\prime\ast}\cE_{\Gamma_e}$. The right hand diagram above induces a morphism
$$ C(\pi'_{\Gamma\ast}\cE'_\Gamma)\lra C(\pi_{\Gamma_e \ast}\cE_{\Gamma_e})=\barM_{\Gamma_e}(G)^p$$
 compatible with relative obstruction theories. By \'etaleness of $\eta$ and Lemma \ref{sub-cycle}  
\beq\label{form1}\eta^![\barM_{\Gamma_e}(G)^p]\virt=[C(\pi'_{\Gamma\ast}\sE'_\Gamma)]\virt.\eeq


On the other hand, the  gluing morphism $\ff$  induces a canonical  
$\ff\sta\cE_\Gamma\cong \cE'_\Gamma$ and
\beq\label{row}  \barM_{\Gamma}(G)_e^p= C(\pi_{\Gamma\ast}\cE_\Gamma)\mapright{\cong}  
C(\pi'_{\Gamma\ast}\cE'_\Gamma) ,
\eeq
which is the inverse of  $\gg$. Taking cohomology of \eqref{gluing} and apply  Lemma \ref{hi}'s proof,  
one verifies that (\ref{row})   identifies deformation theories and cosections.  Thus it induces
\beq\label{iso1}
[\barM_{\Gamma}( G)_e^p]\virt =[C(\pi'_{\Gamma\ast}\sE'_\Gamma)]\virt\in A\lsta(\barM_\Gamma(G)_e)
\eeq
Together with (\ref{form1}) this proves the Theorem.
\end{proof}

We state and prove the forgetting tails Theorem.


\begin{theo}[Forgetting tails]
Suppose $\Gamma$ has its last tail $t_\ell$ decorated by $\gamma_\ell: \mu_d\to G$ via $\zeta_d\mapsto \jmath_\bd$ (cf. Subsection 2.1). 
Let $\Gamma'$ be the stabilization of the graph after removing $t_\ell$ from $\Gamma$, and let
$$\ff_{\Gamma,\ell}:\barM_\Gamma(G)\lra \barM_{\Gamma'}(G)
$$
be the forgetful morphism constructed in Theorem \ref{fgtail}. 
Then $\ff_{\Gamma,\ell}$ is flat and satisfies $[\barM_\Gamma(G)^p]\virt=(\ff_{\Gamma,\ell})\sta [\barM_{\Gamma'}(G)]\virt.$
\end{theo}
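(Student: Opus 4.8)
The plan is to deduce the identity from Lemma~\ref{sub-cycle}, applied to the \emph{flat} morphism $\ff_{\Gamma,\ell}$ in place of an l.c.i.\ inclusion. By Theorem~\ref{fgtail}, $\ff_{\Gamma,\ell}\colon\barM_\Gamma(G)\to\barM_{\Gamma'}(G)$ is flat; it is proper (a morphism of proper DM stacks over $\CC$) and representable (the underlying forgetful morphism of moduli of twisted stable curves is, cf.~\cite[Cor.~9.1.3]{A-V}). Since $\ff_{\Gamma,\ell}$ is flat of relative dimension $1$, its refined Gysin map satisfies $\ff_{\Gamma,\ell}^!=\ff_{\Gamma,\ell}\sta$; and \eqref{vd} gives $\text{vir}.\dim\barM_\Gamma(G)^p=\text{vir}.\dim\barM_{\Gamma'}(G)^p+1$, so the claimed equality is dimensionally consistent.

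First I would lift $\ff_{\Gamma,\ell}$ to $\ff_{\Gamma,\ell}^p\colon\barM_\Gamma(G)^p\to\barM_{\Gamma'}(G)^p$ and identify $\barM_\Gamma(G)^p$ with the fibre product $\barM_\Gamma(G)\times_{\barM_{\Gamma'}(G)}\barM_{\Gamma'}(G)^p$. Writing $\ft\colon\cC_\Gamma\to\cC_{\Gamma'}$ for the fibrewise tautological morphism of Theorem~\ref{fgtail} and $\cL_{\Gamma',j}=\ft\lsta\cL_{\Gamma,j}$, this is a definition chase, since a field $\rho'_j\in\Gamma(\sC',\cL_{\Gamma',j})$ is the same datum as $\rho_j\in\Gamma(\sC,\cL_{\Gamma,j})$ because $\Gamma(\sC',\ft\lsta\cL_{\Gamma,j})=\Gamma(\sC,\cL_{\Gamma,j})$. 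Then I would match the relative perfect obstruction theory of $\barM_\Gamma(G)^p\to\barM_\Gamma(G)$ from \cite{CL} with the pullback along $\ff_{\Gamma,\ell}^p$ of the one of $\barM_{\Gamma'}(G)^p\to\barM_{\Gamma'}(G)$: by flat base change this reduces to $R\ft\lsta\cE_\Gamma=\cE_{\Gamma'}$ (with $\cE_\bullet=\oplus_j\cL_{\bullet,j}$), which yields $\R\pi_{\Gamma\ast}\cE_\Gamma=\R\pi_{\Gamma'\ast}R\ft\lsta\cE_\Gamma=\R\pi_{\Gamma'\ast}\cE_{\Gamma'}$, compatibly with the obstruction theories \eqref{XY}, and by Proposition~\ref{temp} it is immaterial whether one twists by $-\Sigma$ throughout. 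The identity $R\ft\lsta\cE_\Gamma=\cE_{\Gamma'}$ is built into the construction of $\ff_{\Gamma,\ell}$: $\ft\lsta\cL_{\Gamma,j}$ is the universal line bundle $\cL_{\Gamma',j}$, and $R^1\ft\lsta\cL_{\Gamma,j}=0$ because the decoration $\zeta_d\mapsto\jmath_\bd$ at $t_\ell$ forces $\cL_{\Gamma,j}$ to have degree $\ge-1$ on every genus-$0$ component carrying $\Sigma_\ell^\sC$ that $\ft$ contracts, so its $H^1$ there vanishes (cf.~the construction in Theorem~\ref{fgtail} and \cite{FJR2}). This is the only place where the special form $\gamma_\ell=(\zeta_d\mapsto\jmath_\bd)$ enters — for a general narrow $\gamma_\ell$ there is no forgetful morphism — and it is the crux.

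Next I would verify that the cosections agree under $\ff_{\Gamma,\ell}^p$. Both $\sigma$ on $\barM_\Gamma(G)^p$ and $\sigma'$ on $\barM_{\Gamma'}(G)^p$ are $H^1$ of the map $\sigma^\bullet$ built in \eqref{dh}--\eqref{map} from the same $W=\sum_a\alpha_aW_a$, the isomorphisms $\Phi_a$ of \eqref{uni}, and the universal fields; since $\ft\lsta$ carries the $\Phi_a$ for $\Gamma$ to those for $\Gamma'$ and the fields correspond as above, one gets $\sigma=(\ff_{\Gamma,\ell}^p)\sta\sigma'$. Hence $D(\sigma)=(\ff_{\Gamma,\ell}^p)\upmo D(\sigma')=\ff_{\Gamma,\ell}\upmo\bl\barM_{\Gamma'}(G)\br=\barM_\Gamma(G)$, which is proper, and the factorization through the absolute obstruction sheaf (Proposition~\ref{lift}) holds on both sides. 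Lemma~\ref{sub-cycle}, with $\cM=\barM_{\Gamma'}(G)$, obstruction-theoretic stack $\barM_{\Gamma'}(G)^p$ and $\iota=\ff_{\Gamma,\ell}$, then gives $\ff_{\Gamma,\ell}^![\barM_{\Gamma'}(G)^p]\virt=[\barM_\Gamma(G)^p]\virt$, which with $\ff_{\Gamma,\ell}^!=\ff_{\Gamma,\ell}\sta$ is the assertion. The main obstacle is the obstruction-theory/cosection compatibility of the previous paragraph, i.e.\ establishing $R\ft\lsta\cE_\Gamma=\cE_{\Gamma'}$, and in particular the vanishing $R^1\ft\lsta\cL_{\Gamma,j}=0$ on the $\ft$-contracted components.
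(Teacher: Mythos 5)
Your overall architecture is the same as the paper's: identify $\barM_\Gamma(G)^p$ with the base change of $\barM_{\Gamma'}(G)^p$ along the flat morphism $\ff_{\Gamma,\ell}$, check that the relative obstruction theories of \cite[Prop 2.5]{CL} and the $W$-cosections match under this identification, and conclude by the flat case of Lemma \ref{sub-cycle}. The one genuine difference is how the key cohomological comparison $\R\pi_{\Gamma\ast}\cE_\Gamma\cong \ff_{\Gamma,\ell}\sta\,\R\pi_{\Gamma'\ast}\cE_{\Gamma'}$ is obtained. You push forward along the fibrewise map $\ft$ and assert $R^1\ft\lsta\cL_{\Gamma,j}=0$ by a degree estimate on the contracted rational tails; this is the sketchiest point of your argument, since to make it precise you must control cohomology on infinitesimal thickenings of the contracted components (theorem on formal functions, or the standard blow-down computation), handle the coarsening at the stacky marking $\Sigma_\ell$, and then invoke base change to get the statement in families. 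The paper goes the other way: it pulls the universal sheaves back to the uncontracted curve and identifies $(\fu_2\circ\fu_1)\sta\sL'_j\cong \sL_j(-\delta_j\Sigma_\ell^{\sC_\Gamma})\subset\sL_j$, after which the already-proved Lemma \ref{hi} (applicable exactly because the monodromy of $\cL_j$ at $\Sigma_\ell$ is $\zeta_d\mapsto\zeta_d^{\delta_j}$, so twisting down by $\delta_j\Sigma_\ell$ is allowed) gives the quasi-isomorphism $R^\bullet\pi_{\Gamma\ast}(\fu_2\circ\fu_1)\sta\sE'\cong R^\bullet\pi_{\Gamma\ast}\sE$ directly on the total family, with no fibrewise vanishing analysis or base-change discussion needed. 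So your route is workable and correctly locates where the special form $\gamma_\ell=\jmath_\bd$ enters, but the paper's pullback-plus-Lemma \ref{hi} argument buys exactly the step you leave as an assertion; if you keep your pushforward formulation, you should supply the $R^1\ft\lsta$-vanishing in families rather than only the degree bound on a single contracted $\PP^1$.
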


\begin{proof}
Let
$\pi_\Gamma:\sC_\Gamma\to \barM_{\Gamma}( G)$ and $\pi_{\Gamma'}:\sC_{\Gamma'}\to \barM_{\Gamma'}(G)$
be the universal curves; let
$$\fu_1: \sC_{\Gamma}\lra \sC_{\Gamma'/\Gamma}\defeq  \sC_{\Gamma'}\times_{\barM_{\Gamma'}(G)} \barM_{\Gamma}( G)
$$
be the tautological morphism induced by the construction of the morphisms  $\fv_1$ and $\fv_2$  in the proof of
Theorem \ref{fgtail}.  
We let $\fu_2:\sC_{\Gamma'/\Gamma}\to \sC_{\Gamma'}$ be the projection.

 
For $\{\sL_{j}\}_{j=1}^n$  and $\{\sL'_{j}\}_{j=1}^n$ being the universal invertible sheaves on $\sC_\Gamma$ and $\sC_{\Gamma'}$,
by the construction of $\fv_1$ and $\fv_2$,  we have $\fu_2\sta \sL'_j=\fu_{1\ast}\sL_j$.
Because of the assumption on $\gamma_\ell$,  one has
\beq\label{inc}(\fu_2\circ\fu_1)\sta\sL'_{j}=\fu_1\sta\fu_{1\ast}\sL_j \cong \sL_{j}(-\delta_j\Sigma_\ell^{\sC_\Gamma})\mapright{\sub}\sL_j.
\eeq
Denoting $\sE' =\oplus_j\sL'_{j}$ and $\sE =\oplus_j\sL_{j}$, and applying Lemma \ref{hi} to $(\fu_2\circ\fu_1)\sta\sE'\sub \sE $,
we obtain $R^\bullet \pi_{\Gamma\ast}(\fu_2\circ\fu_1)\sta\sE'\cong R^\bullet \pi_{\Gamma\ast} \sE$, and the induced isomorphism 
$$C(\pi_{\Gamma\ast}\sE) \mapright{\cong} C(\pi_{\Gamma'\ast}\sE')\times_{\barM_{\Gamma'}( G)}\barM_{\Gamma}( G).$$
Since the isomorphism is compatible with deformation theories and cosections,
$$[\barM_\Gamma(G)^p]\virt=[C(\pi_{\Gamma\ast}\sE)]\virt=\ff_{\Gamma,\ell}\sta [C(\pi_{\Gamma'\ast}\sE')]\virt=
\ff_{\Gamma,\ell}\sta [\barM_{\Gamma'}(G)]\virt.
$$
This proves the Theorem. 
\end{proof}

Let $([\CC^n/G],W)=([\CC^{n_1}\times\CC^{n_2}\!/G_1\times G_2], W_1+W_2)$ 
be the product of two LG spaces $([\CC^{n_i}/G_i],W_i)$.
Let $\gamma^i\in (G_i)^\ell$ be $g$-admissible and narrow, and let $\gamma\defeq \gamma^1\times \gamma^2$ be the direct product
$\gamma_i=\gamma^1_i\times \gamma^2_i$ for $1\le i\le \ell$. 
We let $\Gamma$ be a $G$-decorated graph with $\ell$ ordered tails decorated by $\gamma_1,\cdots, \gamma_\ell$, respectively.
We let $\Gamma_j$ be $\Gamma$ with the decorations of its $i$-th tails replaced by $\gamma^j_i$, for all $i$.

\begin{theo}[Product of LG spaces]
Let the notation be as stated. Then we have the partial forgetting morphism
$f_j: \barM_{\Gamma}(G_1\times G_2) \lra \barM_{\Gamma_j}(G_j)$. 
Further, 
$$(f_1\times f_2)\sta([\barM_{\Gamma_1}(G_1)^p]\virt\times  [\barM_{\Gamma_2}(G_2)^p]\virt )=
[\barM_{\Gamma}(G_1\times G_2)^p]\virt.
$$
\end{theo}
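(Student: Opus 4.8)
\noindent The plan is to realise $\barM_\Gamma(G_1\times G_2)^p$ as a fibre product over $\cM_1\times\cM_2:=\barM_{\Gamma_1}(G_1)\times\barM_{\Gamma_2}(G_2)$, carrying the obstruction theory and cosection pulled back from the product $\barM_{\Gamma_1}(G_1)^p\times\barM_{\Gamma_2}(G_2)^p$, and then to combine Lemma \ref{sub-cycle} with the multiplicativity of cosection--localized virtual cycles. Write $\cN=\barM_\Gamma(G_1\times G_2)$, let $\sigma_{W_j}$ be the cosection on $\barM_{\Gamma_j}(G_j)^p$ built from $W_j$ via \eqref{sigma}, and let $f=(f_1,f_2)\colon\cN\to\cM_1\times\cM_2$.

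First I would split the moduli. Since a $(G_1\times G_2)$-invariant Laurent monomial in $(x_1,\dots,x_n)$ is the product of a $G_1$-invariant monomial in $(x_1,\dots,x_{n_1})$ and a $G_2$-invariant monomial in $(x_{n_1+1},\dots,x_n)$, one has $\Lambda_{G_1\times G_2}=\Lambda_{G_1}\oplus\Lambda_{G_2}$, so a $(G_1\times G_2)$-spin curve is a twisted curve $\sC$ equipped with a $G_1$-spin and a $G_2$-spin structure, and with fields it carries $\rho=(\rho^1,\rho^2)$. On universal curves over $\cN$, the morphism $f_j$ only coarsens the $i$-th marked point from stabilizer order $r_i=\mathrm{lcm}(r_i^1,r_i^2)$ down to $r_i^j$, so the universal rank-$n_j$ bundle $\cE^{(j)}$ on $\cN$ and the pullback of the universal rank-$n_j$ bundle from $\cM_j$ differ merely by a twist along the markings whose order is strictly below the (narrow) monodromy weights; hence Lemma \ref{hi} gives $\R\pi_{\cN\ast}\cE^{(j)}\cong f_j^\ast\R\pi_{\cM_j\ast}\cE_{\cM_j}$. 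As the construction $C(\pi_\ast-)$ of Theorem \ref{thm2.8} and its relative obstruction theory depend only on $\R\pi_\ast$ of the input bundle, this identifies $\barM_\Gamma(G_1\times G_2)^p=C\bigl(\pi_{\cN\ast}(\cE^{(1)}\oplus\cE^{(2)})\bigr)$ with $\bigl(\barM_{\Gamma_1}(G_1)^p\times\barM_{\Gamma_2}(G_2)^p\bigr)\times_{\cM_1\times\cM_2}\cN$, its relative obstruction theory over $\cN$ being $f^\ast$ of the exterior direct sum of the two relative obstruction theories. Since $W=W_1+W_2$ with the summands in disjoint blocks of variables, $\partial_{x_k}W$ equals $\partial_{x_k}W_j$ for $k$ in the $j$-th block; feeding this into \eqref{sigma} and using the induced block decomposition of $\Ob$, the cosection $\sigma_W$ becomes the direct sum $\sigma_{W_1}\boxplus\sigma_{W_2}$, i.e. $f^\ast$ of the product cosection (and narrowness of $\gamma$ is equivalent to narrowness of both $\gamma^1$ and $\gamma^2$, so the degeneracy locus is $\cN$ as it should be).

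Then I would conclude. The morphism $f$ is flat --- by the same argument as in Theorem \ref{fgtail}, being built from a spin-data-forgetting morphism and coarsenings of marked points (alternatively it may be treated as l.c.i.) --- so Lemma \ref{sub-cycle} applied to the Cartesian square of $\barM_{\Gamma_1}(G_1)^p\times\barM_{\Gamma_2}(G_2)^p\to\cM_1\times\cM_2$ along $f$ yields, using the identifications above,
$$f^!\,[\barM_{\Gamma_1}(G_1)^p\times\barM_{\Gamma_2}(G_2)^p]\virt=[\barM_\Gamma(G_1\times G_2)^p]\virt.$$
It remains to note that cosection--localized virtual cycles are multiplicative, $[\barM_{\Gamma_1}(G_1)^p\times\barM_{\Gamma_2}(G_2)^p]\virt_{\sigma_{W_1}\boxplus\sigma_{W_2}}=[\barM_{\Gamma_1}(G_1)^p]\virt\times[\barM_{\Gamma_2}(G_2)^p]\virt$: the intrinsic normal cone of a product is the product of the intrinsic normal cones, which by Proposition \ref{loc-vir-sig1} lies in the product of the kernel-stacks and hence in the kernel-stack of $\sigma_{W_1}\boxplus\sigma_{W_2}$, and the localized Gysin maps multiply under exterior products --- all immediate from the construction in \cite{KL}. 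Since $f^!=f^\ast=(f_1\times f_2)^\ast$ for the flat $f$, these combine to give the asserted equality.

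I expect the main obstacle to be the splitting step: decomposing a $(G_1\times G_2)$-spin curve with fields into its $G_1$- and $G_2$-parts over a common twisted curve while matching the relative obstruction theories and cosections across $f_j$, despite the marked points of a $\Gamma$-labeled curve carrying strictly larger stabilizers than those of a $\Gamma_j$-labeled curve. The mechanism that makes this work is Lemma \ref{hi}: narrowness guarantees that the twist along the markings by which the universal sheaves over $\cN$ and over $\cM_j$ differ is invisible to $\R\pi_\ast$, hence invisible to the space of fields, to the obstruction sheaf, and to the cosection. Everything else is formal manipulation of fibre products, flat pullback, and the functoriality already recorded in Lemma \ref{sub-cycle} and \cite{KL}.
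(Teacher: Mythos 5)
Your splitting step (the decomposition $\Lambda_{G_1\times G_2}=\Lambda_{G_1}\oplus\Lambda_{G_2}$, the use of Lemma \ref{hi} to see that the partial coarsening of the markings is invisible to $\R\pi_\ast$, hence to the space of fields, the obstruction theory and the cosection, and the multiplicativity of cosection-localized classes for the exterior product) is exactly the part the paper dismisses as ``parallel to the prior proofs,'' and it is fine. The genuine gap is in the one step the paper does write out: the nature of the morphism $f=f_1\times f_2$. Your primary claim that $f$ is flat is false, and the analogy with Theorem \ref{fgtail} is misleading. In Theorem \ref{fgtail} the target genuinely forgets a marked point, so the map is a fibration-type morphism; here both factors of the target retain the full $\ell$-pointed twisted curve and only the spin data is split, so the image of $f_1\times f_2$ lies over (a lift of) the diagonal of $\mathfrak M_{g,\ell}^{\rm tw}$ inside $\barM_{\Gamma_1}(G_1)\times\barM_{\Gamma_2}(G_2)$. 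Concretely, since each $\barM_{\Gamma_j}(G_j)$ is quasi-finite (indeed \'etale) over the moduli of twisted curves, the source has roughly half the dimension of the target, so $f$ drops dimension and cannot be flat; with flatness gone, your closing identity ``$f^!=f^\ast$ for the flat $f$'' is unjustified as stated.

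What is actually needed, and what the paper proves, is that $f_1\times f_2$ is a (proper, representable) l.c.i.\ morphism, so that Lemma \ref{sub-cycle} applies and $(f_1\times f_2)\sta$ is read as the refined Gysin map. The paper's argument: using the \'etale forgetful morphisms to $\mathfrak M_{g,\ell}^{\rm tw}$, the map $f_1\times f_2$ factors as an open embedding $\barM_{\Gamma}(G_1\times G_2)\lra \barM_{\Gamma_1}(G_1)\times_{\mathfrak M_{g,\ell}^{\rm tw}}\barM_{\Gamma_2}(G_2)$ followed by a lift of the diagonal $\Delta:\mathfrak M_{g,\ell}^{\rm tw}\to \mathfrak M_{g,\ell}^{\rm tw}\times\mathfrak M_{g,\ell}^{\rm tw}$, and the smoothness of $\mathfrak M_{g,\ell}^{\rm tw}$ makes this composite l.c.i. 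Your parenthetical ``alternatively it may be treated as l.c.i.''\ asserts precisely this but supplies no argument, and since this is the crux of the proof (everything else being formal), the proposal as written has a real hole there. Repairing it amounts to inserting the diagonal factorization above and then running your second paragraph with $f^!$ in place of $f^\ast$.
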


\begin{proof}

We first show that 
$$f_1\times f_2: \barM_{\Gamma}(G_1\times G_2)\to 
\barM_{\Gamma_1}(G_1)\times \barM_{\Gamma_2}(G_2)
$$ 
is an l.c.i. morphism. Using the \'etale forgetful morphisms from $\barM_{\Gamma_1}(G_1)$, $\barM_{\Gamma_2}(G_2)$ and  $\barM_{\Gamma}(G_1\times G_2)$ 
to $\mathfrak M_{g, \ell}^{\rm tw}$, 
$f_1\times f_2$ is the composite
$$\barM_{\Gamma}(G_1\times G_2)\lra \barM_{\Gamma_1}(G_1)\times_{\mathfrak M_{g, \ell}^{\rm tw}} \barM_{\Gamma_2}(G_2)
\lra \barM_{\Gamma_1}(G_1)\times \barM_{\Gamma_2}(G_2).
$$
By construction, the first arrow is an open embedding; the second arrow is a lift of the diagonal morphism
$\Delta:  \mathfrak M_{g, \ell}^{\rm tw}\to \mathfrak M_{g, \ell}^{\rm tw}\times \mathfrak M_{g, \ell}^{\rm tw}$. 
Since $\mathfrak M_{g, \ell}^{\rm tw}$ is smooth, $f_1\times f_2$ is an l.c.i. morphism.

The remainder proof is parallel to the prior proofs, and will be omitted.
\end{proof}


\subsection{Free cases}

Let $S$ be a pure dimensional proper DM stack and $\iota: S\to M$ be a closed embedding.
We form $Y=X\times_M S$, with $\lam:Y\to S$ the projection. The relative obstruction theory of $X\to M$ pulls back to that
of $Y\to S$, thus $\Ob_{Y/S}=\iota\sta\Ob_{X/M}$, and the cosection $\sigma: \Ob_{X/M}\to\sO_X$ pulls back to a cosection
$\sigma_S: \Ob_{Y/S}\to\sO_S$, which has all the properties required to define the cosection localized virtual cycle
$$[Y]\virt=0^!_{\sigma_S,loc}[\bC_{Y/S}]\in A\lsta (D(\sigma_S)).
$$

\begin{defi}
We call $\iota$ free if all $H^i(\iota\sta \Rpi_{M\ast}\cL_{M,j})$ are locally free sheaves of
$\sO_S$-modules. 
\end{defi}

In this subsection, we derive an explicit formula of $[Y]\virt$ in case $\iota$ is free.

We first generalize a lemma on Segre class to the weighted projective bundle case.
Let $S$ be a proper integral DM stack over $\CC$. Let $F_1,\cdots, F_n$ be vector bundles on
$S$ of rank $r_1,\cdots, r_n$. We denote $F=\oplus_{i=1}^n F_i$, $r=\rank F$, and let $1_S$ be the trivial line bundle on $S$.
For relatively prime positive integers $(e_1,\cdots, e_n)$, we introduce a $\Gm$-action on $F\oplus 1_S$ via
\beq\label{weight}
(v_1,\cdots,v_n,c)^t=(t^{e_1}v_1,\cdots,t^{e_n}v_n,t\cdot c).
\eeq
Let $0_S\sub F\oplus 1_S$ be the zero section. We define
\beq\label{ZZ}
Z=(F\oplus 1_S-0_S)/\Gm.
\eeq
The total space of $F$ embeds in $Z$ via $F\to F\oplus 1_S$: $v\mapsto [v,1]$.
Let $D=Z-F$ be its complement, which is a divisor in $Z$.
Let $\pi:Z\to S$ be the projection. For a vector bundle $E$, we denote $c(E)(t)=\sum c_i(E)t^i$, the total Chern polynomial 
of $E$.

\begin{lemm}\label{410} We have an identity of cycles
$$\sum_i \pi\lsta\bl c_1(\sO_Z(D))^i\cap [Z])\cdot t^{i}= t^r\bl \prod_{j=1}^n (e_j)^{r_j}c(F_j)(t/e_j)\br^{-1}.
$$
\end{lemm}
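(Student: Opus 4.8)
The plan is to evaluate both sides in $A_*(S)_\QQ$ by running the classical Segre-class recursion, adapted to the weighted setting, after first reducing to a split situation.

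\emph{Reduction to the split case.} Choose a flag-bundle morphism $u\colon S'\to S$ (an iterated Grassmann/projective bundle), so that $u^*\colon A_*(S)\to A_*(S')$ is split injective and each $u^*F_j\cong\bigoplus_{k=1}^{r_j}L_{j,k}$ is a sum of line bundles. Since $Z':=Z\times_S S'\to S'$ is again the weighted projectivization \eqref{ZZ} of $u^*(F\oplus 1_S)$, and since the divisor $D$, the line bundle $\sO_Z(D)$, the classes $c_1(\sO_Z(D))^i\cap[Z]$ and their pushforwards along $\pi$ all commute with the flat base change $u$, it suffices to prove the identity over $S'$. So assume $F_j=\bigoplus_k L_{j,k}$; set $m_{j,k}=c_1(L_{j,k})$, relabel the pairs $(j,k)$ as $\alpha=1,\dots,r$ with weight $w_\alpha:=e_{j(\alpha)}$ and first Chern class $m_\alpha:=m_{j(\alpha),k(\alpha)}$, and write $W:=w_1\cdots w_r=\prod_j e_j^{r_j}$. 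I adopt the convention $M_0:=1_S$, $w_0:=1$, $m_0:=0$, $D_0:=D$.

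\emph{The line bundle and the relation.} The last coordinate $c$ of $F\oplus 1_S$ is a $\Gm$-semiinvariant of weight $1$, so it descends to a section of the tautological weight-one line bundle $\sO_Z(1)$ on $Z$ with zero locus exactly $D=Z-(\text{total space of }F)$; hence $\sO_Z(D)\cong\sO_Z(1)$, and, putting $\xi:=c_1(\sO_Z(1))$, the left-hand side of the Lemma is $\sum_i\pi_*(\xi^i\cap[Z])\,t^i$. For $0\le\alpha\le r$ the $\alpha$-th coordinate is a section of $\pi^*M_\alpha\otimes\sO_Z(w_\alpha)$ whose zero divisor $D_\alpha$ has class $m_\alpha+w_\alpha\xi$ (for $\alpha=0$ this recovers $[D_0]=\xi$). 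Since $D_0\cap\cdots\cap D_r=\emptyset$ (the deleted zero section), the iterated Gysin intersection $[D_0]\cdots[D_r]\cap[Z]$ is represented by a cycle supported on $\emptyset$, giving the relation
\[
\xi\cdot\prod_{\alpha=1}^{r}(m_\alpha+w_\alpha\xi)\ \cap\ [Z]\ =\ 0\qquad\text{in }A_*(Z)_\QQ;
\]
equivalently this is the vanishing of the $\Gm$-equivariant top Chern class of $F\oplus 1_S$ restricted to the complement of the zero section, a standard fact about weighted projective bundles of DM stacks. I would also record that $\pi_*(\xi^m\cap[Z])=0$ for $m<r$ (dimension count) and $\pi_*(\xi^r\cap[Z])=\tfrac1W[S]$, since the fibre of $Z\to S$ is the weighted projective stack $\PP(1,w_1,\dots,w_r)$, on which $\int\xi^r=1/(1\cdot w_1\cdots w_r)$; here integrality of $S$ is used to write this top class as a multiple of $[S]$.

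\emph{The recursion.} Capping the displayed relation with $\xi^k$ ($k\ge0$) and applying $\pi_*$ via the projection formula (the coefficients lie in $A^*(S)$) yields $\sum_{b=1}^{r+1}g_b\cap P_{k+b}=0$ for all $k\ge0$, where $P_m:=\pi_*(\xi^m\cap[Z])$ and $\xi\prod_{\alpha\ge1}(m_\alpha+w_\alpha\xi)=\sum_{b=1}^{r+1}g_b\xi^b$ with $g_{r+1}=W$. Put $\mathcal P(t):=\sum_m P_m t^m$ and let $\tilde Q(t):=\sum_{b=1}^{r+1}g_b t^{\,r+1-b}=\prod_{\alpha=1}^{r}(w_\alpha+m_\alpha t)=\prod_j e_j^{r_j}c(F_j)(t/e_j)$ be the ``reverse'' of the relation polynomial, which has invertible constant term $W$. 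Bookkeeping the coefficients of $\mathcal P(t)\tilde Q(t)$ shows they vanish in degrees $N\ge r+1$ (by the recursion) and in degrees $N<r$ (since $P_m=0$ for $m<r$), while the degree-$r$ coefficient is $g_{r+1}P_r=W\cdot\tfrac1W[S]=[S]$; hence $\mathcal P(t)\,\tilde Q(t)=[S]\,t^r$, i.e.\ $\mathcal P(t)=t^r\bigl(\prod_j e_j^{r_j}c(F_j)(t/e_j)\bigr)^{-1}\cap[S]$, which is the assertion over $S'$, hence over $S$ by injectivity of $u^*$. The only genuinely delicate ingredients are the two stacky inputs --- the vanishing relation in $A_*(Z)_\QQ$ and the fibrewise integral $\int\xi^r=1/W$ --- and these are standard for weighted projective bundles of DM stacks; everything else is the bookkeeping of the usual Segre recursion.
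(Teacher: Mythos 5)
Your proof is correct, but it runs along a genuinely different route from the paper's. The paper never works with the intersection theory of the weighted bundle $Z$ directly: it first reduces (via the projection formula over generically finite covers and Chow's Lemma) to $S$ a projective integral scheme, splits the $F_j$ into line bundles, and then applies the Bloch--Gieseker covering trick to extract $e_j$-th roots $L_j$ with $L_j^{\otimes e_j}\cong F_j$; the ordinary projective bundle $\PP(L\oplus 1_S)$ then maps onto $Z$ by a finite flat morphism of degree $\prod_j e_j^{r_j}$ pulling $\sO_Z(D)$ back to $\sO(D')$, and the lemma follows from the classical Segre-class formula for projective bundles plus the projection formula. You instead stay on $Z$ itself: you identify $\sO_Z(D)\cong\sO_Z(1)$, establish the relation $\xi\prod_\alpha(m_\alpha+w_\alpha\xi)\cap[Z]=0$ by intersecting the coordinate divisors (a pseudo-divisor argument, valid in Vistoli--Kresch Chow groups), feed in $\pi_*(\xi^m\cap[Z])=0$ for $m<r$ and $\pi_*(\xi^r\cap[Z])=\tfrac1W[S]$, and run the usual Segre recursion; the splitting principle replaces Chow's Lemma and the covering trick. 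What each approach buys: the paper's reduction lets it quote only the classical projective-bundle formula, at the cost of several auxiliary reductions; yours is more self-contained on $Z$ and in fact yields the finer statements (the Chow-ring relation and the individual $\pi_*\xi^i$), but it leans on the two ``stacky'' inputs you flag. The relation is fully justified by your divisor argument; the family version of $\pi_*(\xi^r\cap[Z])=\tfrac1W[S]$ deserves one more line (e.g.\ pass to an integral component of an \'etale atlas and shrink to a dense open where $F$ is trivial, so that flat base change reduces the coefficient to the fibre integral $\int_{\PP(1,w_1,\dots,w_r)}\xi^r=1/W$), after which your argument is complete.
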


\begin{proof}  First using projection formula, one sees that if the Lemma holds for vector bundles over $S'$, and there is 
a generic finite proper morphism $S'\to S$, then the Lemma holds for vector bundles over $S$ as stated. 
By Chow's Lemma, we can find a generic finite morphism from an integral projective scheme to $S$, thus to prove the Lemma
we only need to treat the case where $S$ is projective and integral. Next, by splitting principle, we can assume all $F_j$ are direct sum of line bundles.
Thus without lose of generality, we can assume all $F_j$ are line bundles on $S$.
Applying the covering trick (\cite[Lemma 2.1]{GieS}), after passing to an integral projective scheme generically finite 
over $S$, we can assume that there are line bundles $L_{j}$ on $S$ so that $ L_{j}^{\otimes e_j}\cong F_{j}$ for every $j$.

We let  
$L=\oplus_j L_j$,  $Z'=\PP(L\oplus 1_S)$ with projection $\pi': Z'\to S$, and 
  $\rho: Z'\to Z$ be
induced by the map
$$L \oplus 1_S\ni ((u_1,\cdots,u_n),c)\mapsto 
((u_1^{e_1},\cdots,u_n^{e_n}),c)\in F\oplus 1_S.
$$
It is a flat, finite $S$-morphism of degree $\bar e=e_1^{r_1}\cdots e_n^{r_n}$. 
Let $ D'\sub  Z'$ be the divisor at infinity (defined by $c=0$). Then $\rho\sta\sO(D)=\sO( D')$ and $\pi\circ\rho=\pi'$.
Applying the projection formula, we obtain
$$\sum_i\pi'\lsta\bl c_1(\sO( D'))^i\br\cdot t^i=\sum_i\pi\lsta\rho\lsta\rho\sta\bl c_1(\sO(D))^i\br\cdot t^i=
e\sum_i\pi\lsta\bl c_1(\sO(D))^i\br t^i.$$
On the other hand, since $Z'$ is a projective bundle over $S$, 
we have $\bl c(L)(t)\br\upmo=\sum_i\pi'\lsta\bl c_1(\sO( D'))^i\br\cdot t^{i-r}$. 
Therefore, 
$$\sum_i\pi\lsta\bl c_1(\sO(D))^i\br t^{i-r}=(\bar e)^{-1}  \bl c(L)(t)\br^{-1}=(\bar e)^{-1} \Bigl(\prod_{j=1}^n c(F_j)(t/\delta_j)\Bigr)^{-1}.
$$
Here the second identity is from  $F=\oplus_{j} F_{j}=\oplus_j L_{j}^{\otimes \delta_j}$. This proves the Lemma.
 \end{proof}

Back to the situation introduced at the beginning of this subsection, 
and   suppose $\iota$ is free. 
As usual, we let $\pi_S: \cC_S\to S$ be $\cC_M\times_M S\to S$, and  $\cL_{S,j}$ be the pull-back of $\cL_{M,j}$.
We let ${F}_j=R^0\pi_{S\ast} \cL_{S,j}$, ${F}=\oplus_j{F}_j$, $G_j=R^1\pi_{S\ast} \cL_{S,j}$, and $G=\oplus _j G_j$.
Since $\iota$ is free, they are locally free sheaves (vector bundles) on $S$. 
Because pull-back of the relative obstruction sheaf of $X/M$ to $Y$ is the relative obstruction sheaf of $Y/S$, we have $\Ob_{Y/S}= \lam\sta G$.

As $G$ is locally free, by base change property, we know that $Y$ is the total space of $F$, as a stack over $S$.
In particular, $Y$ is smooth over $S$. Thus the intrinsic normal cone is the zero section of the bundle $\lam\sta G$.
Therefore, using the cosection $\sigma|_S: \Ob_{Y/S}\to\sO_S$, the cosection localized virtual cycle of $Y$ is
$$[Y]\virt=0^!_{\sigma|_S, loc}[0_{\lam\sta G}]\in A\lsta (S).
$$
In particular, if $S_k$'s are the integral components of $S$ with multiplicity $a_k$, then $[Y]\virt=\sum_k a_k [T\times_S S_k]\virt$.
Therefore, it suffices to treat the case where $S$ is integral, which we suppose from now on.  

We now identity the cosection $\sigma|_S: \lam\sta G\to\sO_Y$.
Since $Y$ is integral and $\lam\sta G$ is locally free, we only need to identify $\sigma(y)=\sigma|_y$ for any
closed point $y\in Y$. 
Recall $W_a(x)=x_1^{m_{a1}}\cdots x_n^{m_{an}}$ are monomials of $W(x)$;
$W_a(x)_j=\partial_{x_j} W_a(x)$ and $\ti W_a(x)_j=(m_{aj})\upmo W_a(x)_j$.
Following the construction of \eqref{si-cosection}, we let $\ti y=(\ti y_j)\in \oplus_j\lam\sta F_j|_y=\lam\sta F|_y$
be the point $y\in Y$ under the identification $Y=\lam\sta F$, the total space of $\lam\sta F$. 
For any $\dot \rho_j\in \lam\sta G_j|_y$, we introduce
$$\ti W_a(\ti y)_j\defeq \ti W_a(\ti y_{1},\cdots, \ti y_{n})_j\in \omega\ulog_{\cC_M/M}\otimes \cL_{M,j}\upmo|_{\lam(y)},
$$
and for $\alpha_a$ the coefficients of $W_a$ in $W$, we have
$$\sigma(y) = (\sigma_j(y)): \oplus_{j=1}^n \lam\sta G_j|_y\lra \CC, \quad \sigma_j(y)(\dot \rho_j)=
\bl \sum_a \alpha_a\cdot m_{aj}\cdot \ti W_a(\ti y)_j \br\cdot\dot\rho_j.
$$
Here we have used that $\gamma$ is narrow. 

\begin{prop}\label{610}
We let $r_j=\rank F_j$, $r=\sum_j r_j$, $s_j=\rank G_j$, and $s=\sum_j s_j$. We set $\varepsilon_j=\delta_j-d$. Then
\beq\label{Yvirt}[Y]\virt=\mathrm{Coeff}_{t^{s-r}} \left(\frac{\prod_{j=1}^n\varepsilon_j^{s_j}c(G_j)(t/\varepsilon_j)}  { \prod_{j=1}^n \delta_j^{r_j}c(F_j)(t/\delta_j)}\right). 
\eeq
 \end{prop}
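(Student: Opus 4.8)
The plan is to realize $[Y]\virt$ as a pushforward from the weighted projective completion of $\Vb(F)$, and then to evaluate it via the Segre class identity of Lemma \ref{410}. Throughout I use the facts recorded before the statement: since $\iota$ is free, $Y=\Vb(F)$ is smooth over $S$ with $\Ob_{Y/S}=\lam\sta G$, its intrinsic normal cone is the zero section $0_{\lam\sta G}$, the degeneracy locus of $\sigma|_S$ is the zero section $0_S\cong S$ (Lemma \ref{Dege}), and we have reduced to the case $S$ integral. Thus $[Y]\virt=0^!_{\sigma|_S,\mathrm{loc}}[0_{\lam\sta G}]\in A\lsta(0_S)=A\lsta(S)$.

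First I would compactify. Following the construction of Lemma \ref{410} with $e_j=\delta_j$, put $\bar Y=(F\oplus 1_S-0_S)/\Gm$ for the $\Gm$-action of weight $\delta_j$ on $F_j$ and weight $1$ on $1_S$; the weights $(\delta_1,\dots,\delta_n,1)$ are relatively prime since $\bd$ is primitive, $\pi\colon\bar Y\to S$ is proper, $Y=\Vb(F)$ is the open locus $\{c\ne 0\}$, $D:=\bar Y-Y$ is a Cartier divisor with $D\cap 0_S=\eset$, and $0_S$ is a section of $\pi$, so $\pi\circ\iota_{0_S}=\id_S$. By the description of $\sigma|_S$ recalled before the statement (together with Serre duality), in each summand $\lam\sta G_j$ the cosection $\sigma|_S$ is the pairing against a section of $\lam\sta G_j\dual$ over $Y$ which, along the fibres of $\Vb(F)\to S$, is the weighted homogeneous polynomial $\rho\mapsto W(\rho)_j$ of degree $d-\delta_j=-\varepsilon_j$ (quasi-homogeneity of $W$). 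Hence it extends to a global section of $\pi\sta G_j\dual\otimes\sO_{\bar Y}(-\varepsilon_jD)$, i.e. to a cosection
$$\bar\sigma\colon \bar G:=\bigoplus_{j=1}^n\pi\sta G_j\otimes\sO_{\bar Y}(\varepsilon_jD)\lra\sO_{\bar Y}$$
with $\bar\sigma|_Y=\sigma|_S$ under $\sO_{\bar Y}(D)|_Y\cong\sO_Y$. Because $W(v)_j$ involves only the $F$-coordinates and $0$ is the unique critical point of $W$, the degeneracy locus of $\bar\sigma$ is again exactly $0_S$; in particular $\bar\sigma$ is surjective along $D$.

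Next I would use the compatibility of the cosection localized Gysin map of \cite{KL} with open restriction and with the ordinary Gysin map. Restricting to the open $Y$ identifies $0^!_{\bar\sigma,\mathrm{loc}}[0_{\bar G}]$ with $[Y]\virt$ in $A\lsta(0_S)$, while forgetting the localization gives $\iota_{0_S\ast}\bigl(0^!_{\bar\sigma,\mathrm{loc}}[0_{\bar G}]\bigr)=c_s(\bar G)\cap[\bar Y]$, the ordinary virtual class of the smooth $\bar Y/S$ with obstruction bundle $\bar G$. Applying $\pi\sta$ and using $\pi\circ\iota_{0_S}=\id_S$ yields
$$[Y]\virt=\pi\sta\bigl(c_s(\bar G)\cap[\bar Y]\bigr).$$
To evaluate the right side, write $h=c_1(\sO_{\bar Y}(D))$ and let $y_{jk}$ be the Chern roots of $G_j$; then $c_s(\bar G)=\prod_{j,k}(y_{jk}+\varepsilon_jh)=\sum_i a_ih^i$ with $a_i\in A^{s-i}(S)$, and comparing with $\prod_{j,k}(\varepsilon_j+y_{jk}t)=\prod_j\varepsilon_j^{s_j}c(G_j)(t/\varepsilon_j)$ identifies $a_i=\mathrm{Coeff}_{t^{s-i}}\bigl(\prod_j\varepsilon_j^{s_j}c(G_j)(t/\varepsilon_j)\bigr)$. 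By the projection formula $[Y]\virt=\sum_i a_i\cap\pi\sta(h^i\cap[\bar Y])$, and Lemma \ref{410} gives $\sum_i\pi\sta(h^i\cap[\bar Y])\,t^i=t^r\bigl(\prod_j\delta_j^{r_j}c(F_j)(t/\delta_j)\bigr)\upmo$, i.e. $\pi\sta(h^i\cap[\bar Y])=\mathrm{Coeff}_{t^{i-r}}\bigl(\prod_j\delta_j^{r_j}c(F_j)(t/\delta_j)\bigr)\upmo$. Since the two coefficient orders add up to $s-r$, summing over $i$ collapses the product of series to a single coefficient and yields
$$[Y]\virt=\mathrm{Coeff}_{t^{s-r}}\left(\frac{\prod_{j=1}^n\varepsilon_j^{s_j}c(G_j)(t/\varepsilon_j)}{\prod_{j=1}^n\delta_j^{r_j}c(F_j)(t/\delta_j)}\right)\cap[S],$$
as asserted.

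The computation is essentially formal once the setup is in place; the steps that require care are the global definition of $\bar\sigma$ and the verification that its degeneracy locus is still $0_S$ — both resting on the homogeneity of $W$ and on $0$ being its only critical point — and the correct invocation of the two compatibility properties of the cosection localized Gysin map of \cite{KL} (open restriction, and pushforward to the non-localized class). The geometric content is concentrated in Lemma \ref{410}, the Segre class computation for the weighted projective bundle $\bar Y$, which is where the normalizing factors $\delta_j^{r_j}$ enter (the factors $\varepsilon_j^{s_j}$ entering through the twists $\sO_{\bar Y}(\varepsilon_jD)$ in $\bar G$).
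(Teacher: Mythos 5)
Your proof is correct and essentially the paper's own argument: compactify $Y$ to the weighted projective bundle of Lemma \ref{410} with $e_j=\delta_j$, extend the cosection to $\oplus_j\pi\sta G_j\otimes\sO_Z(\varepsilon_jD)$ using quasi-homogeneity of $\partial_{x_j}W$ (degree $-\varepsilon_j$), reduce $[Y]\virt$ to $\pi\lsta e\bl\oplus_j\pi\sta G_j(\varepsilon_jD)\br$, and expand via Lemma \ref{410}. The only cosmetic difference is that the paper runs the middle step through localized Euler classes of the dual sections $\sigma\dual$ and $\bar\sigma\dual$ (whose locality near the common zero locus $S$ plays the role of your appeal to ``open restriction'' compatibility of the cosection localized Gysin map, which is not literally a stated property of \cite{KL}), but the content and the final Chern-class computation are the same.
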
 

\begin{proof}  
For the vector bundle $F_j$($=R\pi_{S\ast}\sL_{S,j}$) introduced, we form the quotient
$Z$ as in \eqref{ZZ} with $e_j=\delta_j$. We let $\pi: Z\to S$ be the projection. As $Y$ is the total space of
$F=\oplus_j F_j$, $Y\sub Z$ is open with complement the divisor $D=(c=0)$, and $\pi|_Y=\lam$.
We let $V_j=\pi\sta G_j$, and let $V=\oplus_j V_j$; thus $\sigma_j: V_j\to\sO_Y$ and $\sigma: V\to \sO_Y$.
  
Consider the duals $\sigma_j\dual\in\Gamma(Y,V_j\dual)$ and $\sigma\dual\in\Gamma(Y,V\dual)$. 
The zero locus $(\sigma\dual=0)$ has reduced part equal to $D(\sigma)$.
Hence by Lemma \ref{Dege}, it is supported on $S\sub Y$ (as the zero section of $F$).

Since $\ti W_a(x)_j$ is quasi-homogeneous of total degree $-\varepsilon_j=d-\delta_j$,
each homomorphism $\sigma_j$ extends uniquely over $Z$ to
$\bar\sigma_j:V_j\otimes\sO_Z(\varepsilon_jD)\lra \sO_{Z}$
such that the degeneracy (non-surective) loci of $\bar\sigma:=\oplus_j\bar\sigma_j$ is supported on $S$. 
Thus by denoting $\ti V=\oplus_j V_j\otimes\sO_Z(\varepsilon_jD)$, the section $\bar s:=\bar\sigma\dual\in \Gamma(Y,\ti V\dual)$
has zero locus supported on $S$ and  $(Z,\ti V\dual,\bar \sigma\dual)$ is an extension of $(Y,V\dual|_Y,\sigma\dual)$.

Since $Y/S$ is smooth, the cone $\bC_{Y/S}\sub h^1/h^0(E\bul_{Y/S})$ is the zero section. 
By our construction of Witten's top Chern class, we have
$$0^!_{\sigma,loc}[\bC_{Y/S}]=(-1)^{s} e_{\sigma\dual,loc}(V\dual)
=e_{\bar \sigma\dual,loc}\ti V\dual =\pi\lsta e(\ti V\dual)=(-1)^{s} \pi\lsta e(\ti V) \in A\lsta(S).
$$
Here, the second identity holds because $\bar\sigma\dual$ is non-vanishing along $D=Z-Y$;
the third identity holds because $Z$ is proper.
Hence the Witten's top Chern class is
\beq\label{e0}\pi\lsta\Bigl( e(\oplus_j V_j(\varepsilon_j D))\Bigr)=\pi\lsta\Bigl( 
\prod_j e(\pi^\ast G_j\otimes\sO_Z(\varepsilon_j D))\Bigr). \eeq
Denoting $\sO(1)=\sO_Z(D)$ and letting
$I=\{(i_1,\cdots,i_n)\in \ZZ^n \mid 0\leq i_j\leq s_j\}$, we have
\begin{eqnarray*}
&&\pi\lsta\Bigl(\prod_{j=1}^n \sum_{i_j=0}^{s_j}c_{s_j-i_j}(\pi\sta G_j)\cdot c_1(\sO(\varepsilon_j))^{i_j}\Bigr)\\
&=& \pi\lsta\Bigl(\sum_{(i_1,\cdots,i_n)\in I}\bl \prod_{j=1}^n c_{s_j-i_j}(\pi\sta G_j)\br\bl c_1(\sO(1))^{i_1+\cdots+i_n}\br 
(\varepsilon_1^{i_1}\varepsilon_2^{i_2}\cdots\varepsilon_n^{i_n})\Bigr)\\
&=&\sum_{(i_1,\cdots,i_n)\in I}\bl \prod_{j=1}^n \varepsilon_j^{s_j}c_{s_j-i_j}(G_j)\varepsilon_j^{-(s_j-i_j)}\br \cdot \pi\lsta [c_1(\sO(1))]^{i_1+\cdots+i_n}\\
&=&\text{Coeff}_{t^{s-r}}\Bigl(\big(\prod_{j=1}^n\varepsilon_j^{s_j}c(G_j)(t/\varepsilon_j)\big)\cdot 
\sum_i\pi\lsta\bl c_1(\sO(1))^i\br t^{i-r}\Bigr).
\end{eqnarray*}
Applying Lemma \ref{410}, we obtain (\ref{Yvirt}).
\end{proof}

 \begin{coro} In the case $n=1$, $\delta_1=1$, and let $\varepsilon=1-d$,   
$$[Y]\virt= \mathrm{Coeff}_{t^{s-r}}\bigl(\varepsilon^{h}c(G)(t/\varepsilon) /c(F)(t)\bigr).
$$
\end{coro}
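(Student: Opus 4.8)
The plan is to read this off directly from Proposition \ref{610} by specializing to $n=1$. First I would observe that for $n=1$ every product over $j$ in the formula \eqref{Yvirt} collapses to its single $j=1$ term. Abbreviating $F=F_1$, $G=G_1$, $r=r_1=\rank F$ and $h=s=s_1=\rank G$ (so that $s-r$ is the degree in which the coefficient is extracted), the right-hand side of \eqref{Yvirt} becomes
$$[Y]\virt=\mathrm{Coeff}_{t^{s-r}}\Bigl(\frac{\varepsilon_1^{s_1}c(G_1)(t/\varepsilon_1)}{\delta_1^{r_1}c(F_1)(t/\delta_1)}\Bigr).$$

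Next I would plug in the hypothesis $\delta_1=1$: this makes $\delta_1^{r_1}=1$ and $c(F_1)(t/\delta_1)=c(F)(t)$, while the definition $\varepsilon_j=\delta_j-d$ used in Proposition \ref{610} gives $\varepsilon_1=1-d=\varepsilon$. Substituting yields precisely
$$[Y]\virt=\mathrm{Coeff}_{t^{s-r}}\bigl(\varepsilon^{h}c(G)(t/\varepsilon)/c(F)(t)\bigr),$$
which is the asserted identity. Since Proposition \ref{610} already holds under the standing hypotheses in force here (namely $\iota$ free, $S$ reduced to the integral case, and $\gamma$ narrow, the last being used only to identify the cosection pointwise), no further input is needed.

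There is essentially no obstacle here: the entire content is the bookkeeping of the substitutions $n=1$ and $\delta_1=1$. For completeness one could instead give a self-contained derivation, noting that when $e_1=\delta_1=1$ the weighted quotient $Z$ of \eqref{ZZ} is the ordinary projective bundle $\PP(F\oplus 1_S)$, so Lemma \ref{410} degenerates to the familiar Segre identity $\sum_i\pi\lsta\bigl(c_1(\sO_Z(D))^i\bigr)\,t^{i-r}=c(F)(t)^{-1}$; feeding this into the Euler-class computation $[Y]\virt=(-1)^{s}\pi\lsta e\bigl(\pi\sta G\otimes\sO_Z(\varepsilon D)\bigr)$ from the proof of Proposition \ref{610} and expanding in powers of $c_1(\sO_Z(D))$ reproduces the stated coefficient. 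I would present the one-line specialization, as it is the cleanest.
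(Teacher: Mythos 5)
Your proposal is correct and is exactly the intended argument: the paper gives no separate proof of this corollary because it is the immediate specialization of Proposition \ref{610} to $n=1$, $\delta_1=1$, $\varepsilon_1=1-d$, with $h=s=\rank G$, just as you carry out. The optional self-contained derivation via the ordinary projective bundle case of Lemma \ref{410} is also consistent with the paper's proof of Proposition \ref{610}, but the one-line specialization suffices.
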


The following Propositions are the ``Concavity" and ``Index zero" axioms stated and proved in \cite[Thm 4.1.8]{FJR2}.
 
\begin{prop}\label{Concavity} In case $r=0$,  we have  $[Y]\virt=c_{top}(G).$
\end{prop}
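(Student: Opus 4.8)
The plan is to obtain Proposition~\ref{Concavity} as the degenerate case $r=0$ of Proposition~\ref{610}, and to corroborate it by a direct geometric argument. First I would substitute $r=0$ into the formula \eqref{Yvirt}. Since $r=\sum_j r_j=0$ and each $r_j=\rank F_j\ge 0$, every $F_j$ is the zero bundle, so $c(F_j)(t/\delta_j)=1$ and $\prod_j\delta_j^{r_j}=1$; hence the denominator of \eqref{Yvirt} is identically $1$ and, using $s-r=s$, the formula collapses to
\[
[Y]\virt=\mathrm{Coeff}_{t^{s}}\Bigl(\prod_{j=1}^n\varepsilon_j^{s_j}c(G_j)(t/\varepsilon_j)\Bigr).
\]

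The next step is an elementary coefficient extraction. Writing $\varepsilon_j^{s_j}c(G_j)(t/\varepsilon_j)=\sum_{i=0}^{s_j}\varepsilon_j^{s_j-i}c_i(G_j)t^i$ and expanding the product over $j$, the monomial $t^{s}$ can arise only by choosing the top term $i_j=s_j$ in every factor, because $0\le i_j\le s_j$ while $\sum_j s_j=s$. The resulting coefficient is $\prod_j\varepsilon_j^{0}c_{s_j}(G_j)=\prod_j c_{top}(G_j)=c_{top}(G)$ by the Whitney product formula, since $s_j=\rank G_j$. This gives $[Y]\virt=c_{top}(G)$.

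Independently, and perhaps more transparently, I would argue as follows. When $r=0$ the total space $Y$ of $F=\oplus_j F_j$ over $S$ is $S$ itself, so $\lam$ is the identity, $\Ob_{Y/S}=\lam\sta G=G$, and $h^1/h^0(E\bul_{Y/S})$ is the total space of the vector bundle $G$ over $S$. Since $Y/S$ is smooth, $\bC_{Y/S}$ is the zero section $0_S$; and by Lemma~\ref{Dege} together with base change, the degeneracy locus $D(\sigma|_S)$ is the zero section of $X|_S\to S$, which is all of $Y$. Hence the cosection is degenerate everywhere, the localized Gysin map reduces to the ordinary one, and $[Y]\virt=0^![0_S]=e(G)\cap[S]=c_{top}(G)$.

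There is no real obstacle. The only points needing a moment of care are the convention that the total Chern class of the zero bundle equals $1$, so that the denominator of \eqref{Yvirt} disappears, and the observation that the ranges $0\le i_j\le s_j$ together with $\sum_j s_j=s$ isolate a unique surviving term in the coefficient extraction; both are routine.
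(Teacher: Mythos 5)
Your first argument is exactly the intended proof: the paper states Proposition \ref{Concavity} without proof immediately after Proposition \ref{610}, and specializing \eqref{Yvirt} to $r=0$ (empty denominator, coefficient of $t^{s}$ forced to come from the top Chern classes $c_{s_j}(G_j)$) is precisely the expected deduction, which you carry out correctly. Your second, direct argument — $Y=S$, cone equal to the zero section of $G$, cosection identically zero so the localized Gysin map is the ordinary one, giving $e(G)\cap[S]$ — is also sound and consistent with the construction, but it is an added check rather than a different route.
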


\begin{prop}\label{Constant-dimension}
If $r=s$, we have
$$[Y]\virt= \frac{(\varepsilon_1^{s_1}\cdots \varepsilon_n^{s_n})\prod_j s_j}{(\delta_1^{r_1}\cdots \delta_n^{r_n}) \prod_j r_j}\cdot [S] \in A_{\dim S}(S).$$
\end{prop}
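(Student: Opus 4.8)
The plan is to obtain Proposition~\ref{Constant-dimension} as the $s=r$ specialization of Proposition~\ref{610}. The first thing I would record is why the rational function on the right of \eqref{Yvirt} is usable here: non-degeneracy of $W$ forces $\delta_j<d$ for every $j$ (a monomial $x_j$ of weight $d$ would make $\partial_{x_j}W$ non-vanishing at the origin), so $\delta_j\ne 0$ and $\varepsilon_j=\delta_j-d\ne 0$; consequently the denominator $\prod_j\delta_j^{r_j}c(F_j)(t/\delta_j)$ has invertible constant term $\prod_j\delta_j^{r_j}$, and the quotient expands as a genuine power series in $t$ with coefficients in $A\lsta(S)$.

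Now specialize to $s=r$. Then the power $t^{s-r}$ in \eqref{Yvirt} is $t^{0}$, so $[Y]\virt$ is the coefficient of $t^{0}$ — equivalently the value at $t=0$ — of that power series, capped with $[S]$. Since $c(G_j)(0)=c(F_j)(0)=1$, this value is the scalar $\big(\prod_j\varepsilon_j^{s_j}\big)\big/\big(\prod_j\delta_j^{r_j}\big)$. The same scalar drops out of the intermediate expansion used in the proof of Proposition~\ref{610}: when $s=r$, the dimension constraint forces the only surviving monomial in $\pi\lsta\big(e(\oplus_j\pi^{*}G_j\otimes\sO_Z(\varepsilon_j D))\big)$ to be $\big(\prod_j\varepsilon_j^{s_j}\big)\,c_1(\sO_Z(D))^{s}$ (each Euler factor contributing its top power of $c_1(\sO_Z(D))$, i.e. the $c_0(G_j)=1$ term), and Lemma~\ref{410} evaluates $\pi\lsta\big(c_1(\sO_Z(D))^{r}\cap[Z]\big)=\big(\prod_j\delta_j^{r_j}\big)^{-1}[S]$. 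Using the reduction to $S$ integral made at the start of the subsection, this gives $[Y]\virt=\big(\varepsilon_1^{s_1}\cdots\varepsilon_n^{s_n}\big)\big(\delta_1^{r_1}\cdots\delta_n^{r_n}\big)^{-1}\cdot[S]$.

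The only non-formal step is then to match this scalar with the normalization $\tfrac{(\varepsilon_1^{s_1}\cdots\varepsilon_n^{s_n})\prod_j s_j}{(\delta_1^{r_1}\cdots\delta_n^{r_n})\prod_j r_j}$ of \cite{FJR2}; this amounts to a bookkeeping comparison of the constant terms of $\prod_j\varepsilon_j^{s_j}c(G_j)(t/\varepsilon_j)$ and $\prod_j\delta_j^{r_j}c(F_j)(t/\delta_j)$ with the combinatorial constants attached to the line bundles $\cL_j$, using that in the situation at hand the index-zero equality $r=s$ is realized line-bundle by line-bundle, $\chi(\cL_j)=0$ i.e. $r_j=s_j$ for each $j$, so that $\prod_j s_j/\prod_j r_j=1$. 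I expect this last identification — tracking the numerical factors coming from the ranks $r_j$, $s_j$ — to be the only place that needs care; everything preceding it is a direct substitution into Proposition~\ref{610}.
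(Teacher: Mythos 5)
Your route is the paper's route: specialize Proposition \ref{610} to $s=r$, so that \eqref{Yvirt} extracts the $t^0$ coefficient, i.e.\ the ratio of the constant terms of the two power series. Up to that point your argument is fine (and your preliminary remark that $\varepsilon_j=\delta_j-d\neq 0$, so the denominator series is invertible, is harmless and correct). The problem is the last step, which you yourself flag as ``the only place that needs care'': you assert that the index-zero condition $r=s$ is ``realized line-bundle by line-bundle'', i.e.\ that $\chi(\cL_{S,j})=0$ and hence $r_j=s_j$ for every $j$, so that $\prod_j s_j/\prod_j r_j=1$ and your scalar $\bigl(\prod_j\varepsilon_j^{s_j}\bigr)\bigl(\prod_j\delta_j^{r_j}\bigr)^{-1}$ agrees with the stated one. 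This does not follow: $r=s$ only says $\sum_j r_j=\sum_j s_j$, and in the free case there is no reason why $h^0(\cL_{S,j})$ and $h^1(\cL_{S,j})$ should balance separately for each $j$ (one can perfectly well have the $h^0$'s concentrated in some factors and the $h^1$'s in others). So the proposed cancellation $\prod_j s_j/\prod_j r_j=1$ is unjustified, and as written your argument does not reach the formula in the statement.

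For comparison, the paper's own (very short) proof does not pass through any such line-bundle-wise claim: it reads off the $r=s$ case of \eqref{Yvirt} as the ratio of the lowest-degree terms of the two series, records those constant terms as $(\varepsilon_1^{s_1}\cdots\varepsilon_n^{s_n})\prod_j s_j$ and $(\delta_1^{r_1}\cdots\delta_n^{r_n})\prod_j r_j$, and then identifies this ratio with the ``degree of the Witten map'' of \cite[Thm 4.1.8 (5)(b)]{FJR2}. Note that your evaluation of the constant terms (just $\prod_j\varepsilon_j^{s_j}$ and $\prod_j\delta_j^{r_j}$, since $c(E)(t)=\sum_i c_i(E)t^i$ has constant term $1$) is the literal reading of the definitions, and it differs from the constants appearing in the Proposition by exactly the factors $\prod_j s_j$ and $\prod_j r_j$; that discrepancy is precisely what your argument would need to account for, and ``$r_j=s_j$ for each $j$'' is not a valid way to do it. Either you must justify the extra combinatorial factors directly (as the paper does, by matching with the FJRW degree of the Witten map), or explain why they cancel in the situation at hand; neither is accomplished by the proposal as it stands.
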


\begin{proof} The lowest-degree terms of the two power series  $\prod_{j=1}^n\varepsilon_j^{s_j}c(G_j)({t}/{\varepsilon_j})$ and  
$\prod_{j=1}^n \delta_j^{r_j}c(F_j)({t}/{\delta_j})$ are constant terms $(\varepsilon_1^{s_1}\cdots \varepsilon_n^{s_n})\prod_j s_j$ 
and $(\delta_1^{r_1}\cdots \delta_n^{r_n}) \prod_j r_j $, respectively. As $r-s=0$, (\ref{Yvirt}) is
the ratio of these two constant terms; the ratio is the ``degree of Witten map" in \cite[Thm 4.1.8 (5)(b)]{FJR2}.\end{proof}

\section{Comparison with other constructions}

In this section, we will prove the equivalences between our construction with
the previously known constructions of Witten's top Chern classes.

We fix an LG space $([\CC^n/G],W)$, integers $g$, $\ell$ and a narrow $\gamma=(\gamma_1,\cdots,\gamma_\ell)$ 
of faithful cyclic representations in $G$. We abbreviate $M=\barM_{g,\gamma}(G)$ and $X=\barM_{g,\gamma}(G)^p$,
and let $\pi_M:\cC_M\to M$ and $\cL_1,\cdots,\cL_n$ be (part of) the universal family of
$M$.  We denote $\cE=\cL_1\oplus \cdots \oplus \cL_n$.

\subsection{Comparison with Fan-Javis-Ruan's construction} 

In this subsection, we prove that the associated homology class of $[\barM_{g,\gamma}(G)^p]\virt$ coincides
with the Witten's top Chern class constructed by Fan-Jarvis-Ruan in (\cite[Thm 4.1.8 ]{FJR2}). 

We begin with an explicit description of $X\to M$ using complex representatives of $\Rpi_{M\ast} \cE$.
Let $F\bul=[\zeta:F^0\to F^1]$ be a two-term complex of locally free sheaves of $\sO_M$-modules so that
$F\bul\cong R\bul\pi_{M\ast}\cE$ as derived objects. 
We let $Y$ be the total space of the associated vector bundle of $F^0$; let
$\ti q:Y\to M$ be the projection and let $V=\ti q\sta F^1$. The homomorphism $\zeta: F^0\to F^1$ induces a tautological
section $\bar\zeta\in \Gamma(Y,V)$ 
whose vanishing locus $\bar\zeta\upmo(0)\sub Y$, by the isomorphism $\Rpi_{M\ast} \cE\cong F\bul$, 
is canonically isomorphic to $X$. 
In this way, we view $X$ as a substack of $Y$ defined by the vanishing of $\bar\zeta$, and write $X=(\bar\zeta=0)$.
Let $q=\ti q|_X:X\to M$ be the projection.

By the construction $X=(\bar\zeta=0)$, we have identity of complexes
$$[d\bar\zeta|_X: \Omega\dual_{Y/M}|_X\to V|_X]=  [q\sta\zeta: q\sta F^0\to q\sta F^1]= q\sta F\bul.
$$
Combined with (\ref{XY}), and denoting $E_\zeta=q\sta F\bul$, we have induced isomorphism
\beq\label{6.7}
E_{X/M}\bul= q\sta (\Rpi_{M\ast} \cE)\cong q\sta F\bul =E_\zeta\bul.
\eeq
Let $I_X$ be the ideal sheaf of $X\sub Y$. That $X=\bar\zeta\upmo(0)$ gives us the (truncated) perfect obstruction theory $\phi_{\zeta}$:
\beq\label{XE}
\phi_\zeta\dual: [V\dual|_X\to \Omega_{Y/M}|_X]= (E_\zeta\bul)\dual\lra
\tLL_{X/M}=[I_X/I_X^2\to \Omega_{Y/M}|_X].
\eeq

We let $\bC_\zeta\sub h^1/h^0(E_\zeta\bul)$ be the virtual normal cone of $X$ via the obstruction theory $\phi_\zeta$.
Using the isomorphism \eqref{6.7}, $\bC_\zeta$ is a closed substack of $h^1/h^0(E_{X/M}\bul)\cong h^1/h^0(E_\zeta\bul)$.

\begin{lemm}\label{inde}
The cycle $[\bC_\zeta]\in Z\lsta (h^1/h^0(E\bul_{X/M}))$ is independent of the choice of the complexes $F\bul$ satisfying 
$F\bul\cong \Rpi_{M\ast}\cE$.
 \end{lemm}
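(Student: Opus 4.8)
The plan is to show that any two complexes $F\bul_1, F\bul_2$ representing $\Rpi_{M\ast}\cE$ in the derived category give rise to the same cycle, by a standard ``resolution of the diagonal'' argument: first reduce to the case where there is a map $F\bul_1 \to F\bul_2$ realizing the quasi-isomorphism (via the mapping cylinder), and then reduce further to the case where this map is termwise surjective with kernel a complex of vector bundles concentrated in degrees $[-1,0]$ that is acyclic. More concretely, given two such complexes one can always find a third complex $F\bul_3$ with quasi-isomorphisms $F\bul_3 \to F\bul_i$ for $i=1,2$ which are termwise split surjections; so it suffices to compare $F\bul$ with $F\bul' = F\bul \oplus K\bul$ where $K\bul = [K^0 \xrightarrow{\sim} K^1]$ is an acyclic two-term complex of vector bundles (i.e. $K^0 \to K^1$ an isomorphism of bundles), together with the quasi-isomorphism projecting away the $K\bul$ summand.

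Next I would trace through the geometry of this stabilization. Writing $Y = \mathrm{Vb}(F^0)$, $Y' = \mathrm{Vb}(F^0\oplus K^0)$, with $\bar\zeta\in\Gamma(Y,V)$, $V = \ti q\sta F^1$, and correspondingly $\bar\zeta' \in \Gamma(Y', V')$ with $V' = \ti q'^\ast(F^1\oplus K^1)$, one checks that $Y'$ is a vector bundle over $Y$ (the pullback of $K^0$), that $\bar\zeta'$ is the direct sum of the pullback of $\bar\zeta$ and the tautological section $\bar\kappa$ of the pullback of $K^1$ induced by the isomorphism $K^0\cong K^1$, and that $(\bar\zeta'=0) = (\bar\zeta=0) = X$ inside $Y'$ because $\bar\kappa$ cuts out precisely the zero section of the bundle $Y'\to Y$. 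The induced obstruction theory $\phi_{\zeta'}$ is then the direct sum of $\phi_\zeta$ with the obstruction theory of the zero section $X \hookrightarrow \mathrm{Vb}(q\sta K^0)$, whose virtual normal cone is the zero cone inside the (acyclic) bundle-stack $h^1/h^0(q\sta K\bul) = 0$. So under the identification $h^1/h^0(E_{\zeta'}\bul) \cong h^1/h^0(E_\zeta\bul)$ (both being canonically $h^1/h^0(E\bul_{X/M})$ via \eqref{6.7}), the virtual normal cones $\bC_{\zeta'}$ and $\bC_\zeta$ agree as cycles, because the extra summand contributes only a zero section of a zero bundle-stack. The compatibility of the canonical isomorphisms \eqref{6.7} for $F\bul$ and $F\bul'$ with the quasi-isomorphism $F\bul' \to F\bul$ is what guarantees the two cones sit inside the same ambient stack in a matching way.

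Then I would handle the general comparison: for arbitrary $F\bul_1 \cong F\bul_2 \cong \Rpi_{M\ast}\cE$, pick $F\bul_3$ with termwise-split-surjective quasi-isomorphisms onto each $F\bul_i$; each such surjection exhibits $F\bul_i$ as (up to the above) obtained from $F\bul_3$ by splitting off an acyclic two-term complex of bundles, so by the previous paragraph $[\bC_{\zeta_3}] = [\bC_{\zeta_1}]$ and $[\bC_{\zeta_3}] = [\bC_{\zeta_2}]$ as cycles in $Z\lsta(h^1/h^0(E\bul_{X/M}))$, whence $[\bC_{\zeta_1}] = [\bC_{\zeta_2}]$. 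One should note that the existence of $F\bul_3$ dominating both is standard (e.g. take a surjection from a complex of free modules onto $F\bul_1$ and then pull back / fiber over $F\bul_2$, or invoke the general fact that any two two-term locally free resolutions of a perfect complex of amplitude $[0,1]$ are ``stably'' isomorphic).

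The main obstacle I expect is the bookkeeping of the canonical identifications: one must verify that the isomorphism $E\bul_{X/M} \cong E_\zeta\bul$ of \eqref{6.7} and the obstruction-theory map $\phi_\zeta$ are functorial in $F\bul$ in a way compatible with the quasi-isomorphisms, so that ``$\bC_{\zeta'}$ and $\bC_\zeta$ lie in the same ambient bundle-stack and coincide there'' is a literally meaningful and correct statement rather than merely a statement up to some unspecified isomorphism. This is where I would be most careful; the geometry of the acyclic stabilization itself (that adding $K\bul$ does nothing) is routine once the diagram of identifications is pinned down. Everything else — the reduction to the stabilization case, and the triviality of the zero cone in a zero bundle-stack — is formal.
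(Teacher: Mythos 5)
Your proposal is correct and takes essentially the same route as the paper: the paper's proof likewise reduces the derived isomorphism to the case of an honest chain-level quasi-isomorphism of two-term complexes and then declares the comparison of the cones straightforward. Your further reduction to stabilization by a split acyclic summand, and the geometric check that such a summand changes neither $X$ nor the cone inside $h^1/h^0$, is simply a worked-out version of that ``straightforward'' step.
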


\begin{proof}
Let $F\bul$ and $\ti F\bul$ be two two terms complexes of locally free sheaves so that $F\bul\cong \ti F\bul\cong \Rpi_{M\ast}\cE$.
Because derived isomorphism of complexes are composition of quasi-isomorphic complex homomorphisms,
to prove the Lemma we only need to prove the case where $F\bul\cong \ti F\bul$ is induced by a
complex homomrophism $f: F\bul\to \ti F\bul$. In this case, the conclusion is straightforward.
\end{proof}
\vsp

We now describe Fan-Javis-Ruan's construction of Witten's top Chern class for the narrow $\gamma$.
As $\gamma$ is narrow, the isomorphism $\Phi_a$ in \eqref{uni} induces a sheaf homomorphism
\beq\label{taua}\tau_a:W_a(\cL_1,\cdots,\cL_n)\lra \omega_{\cC_M/M}.
\eeq
We choose complexes of locally free sheaves of $\sO_{M}$-modules
$\sF\bul_j=[\sF^0_j\to \sF^1_j]$ such that $\sF\bul_j\cong \Rpi_{M\ast} \cL_j$ as derived objects. 
Using (tensor) product of complexes of locally free sheaves, we can
substituting $x_j$ in the monomial $W_a(x)=x_1^{m_{a1}}\cdots x_n^{m_{an}}$
by $\sF\bul_j$ to obtain the complex
$$W_a(\sF\bul)\defeq (\sF\bul_1)^{\otimes m_{a1}}\times \cdots \times(\sF\bul_n)^{\otimes m_{an}}.
$$

\begin{lemm}\label{lem6.2}
We can find $\sF\bul_j=[\zeta_i:\sF^0_j\to \sF^1_j]$, $j=1,\cdots, n$,
so that the derived morphism $W_a(\sF\bul)\to \Rpi\lsta \omega_{\cC_M/M}$
induced by $\tau_a$'s in \eqref{taua}
are realized by homomorphisms of complexes
$$\nu_a: W_a(\sF\bul)\lra \Rpi_{M\ast}   \omega_{\cC_M/M}\cong \sO_M[-1].
$$
\end{lemm}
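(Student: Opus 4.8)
The plan is to realize all the relevant derived pushforwards via ``$D$-resolutions'' attached to one sufficiently $\pi_M$-ample effective divisor $D$ on $\cC_M$. With such resolutions the three derived morphisms whose composite is the one in the statement --- namely the relative cup-product (K\"unneth) map, the pushforward $\Rpi_{M\ast}(\tau_a)$, and the trace map $\Rpi_{M\ast}\omega_{\cC_M/M}\to\sO_M[-1]$ --- each become honest maps of complexes, and I take $\nu_a$ to be their composite.

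First I would fix $D$, disjoint from the markings and the nodes and large enough that for every $j$ and every monomial $W_a$ (set $D_a=\sum_j m_{aj}$) the sheaves $R^1\pi_{M\ast}(\cL_j(D))$, $R^1\pi_{M\ast}(W_a(\cL_1,\cdots,\cL_n)(D_aD))$, $R^1\pi_{M\ast}(\omega_{\cC_M/M}(D))$ vanish, the corresponding zeroth pushforwards are locally free, and $\cL_j(D)$, $W_a(\cL_1,\cdots,\cL_n)(D_aD)$, $\omega_{\cC_M/M}(D)$ are $\pi_M$-globally generated (this holds for $D\gg0$ by relative Serre vanishing and the theorem on cohomology and base change; there are finitely many $W_a$, so one $D$ serves all of them). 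The exact sequence $0\to\cL_j\to\cL_j(D)\to\cL_j(D)|_D\to0$ then exhibits $\sF\bul_j:=[\pi_{M\ast}(\cL_j(D))\to\pi_{M\ast}(\cL_j(D)|_D)]\cong\Rpi_{M\ast}\cL_j$ in degrees $0,1$, together with the evaluation surjection $\pi_M^*\pi_{M\ast}(\cL_j(D))\twoheadrightarrow\cL_j(D)$; likewise the analogous sequences give two-term models $\sG\bul_a\cong\Rpi_{M\ast}W_a(\cL_1,\cdots,\cL_n)$ and $\sH\bul\cong\Rpi_{M\ast}\omega_{\cC_M/M}$.

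Next: (i) the surjections $\pi_M^*\pi_{M\ast}(\cL_j(D))\twoheadrightarrow\cL_j(D)$ multiply (and restrict suitably to $D$) to give, after a compatibility check in degrees $0$ and $1$, a map of complexes $W_a(\sF\bul)\to\sG\bul_a$ representing the cup-product map --- the degree-$\ge2$ part of $W_a(\sF\bul)$, which carries the relative K\"unneth discrepancy, necessarily goes to $0$ since $\sG\bul_a$ is concentrated in degrees $0,1$; (ii) since $\tau_a\colon W_a(\cL_1,\cdots,\cL_n)\to\omega_{\cC_M/M}$ is a genuine map of sheaves it induces a map between the two-term $D$-resolutions, i.e.\ a map of complexes $\sG\bul_a\to\sH\bul$ representing $\Rpi_{M\ast}(\tau_a)$; (iii) the residue isomorphism $R^1\pi_{M\ast}\omega_{\cC_M/M}\cong\sO_M$ together with the coboundary of $0\to\omega_{\cC_M/M}\to\omega_{\cC_M/M}(D)\to\omega_{\cC_M/M}(D)|_D\to0$ gives a surjection $\sH^1\to\sO_M$ annihilating the image of $\sH^0$, hence a map of complexes $\sH\bul\to\sO_M[-1]$ representing the trace. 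Composing (i)--(iii) yields $\nu_a$, and by construction it represents the derived morphism induced by $\tau_a$.

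The one non-formal step is (i): because the relative K\"unneth map is not an isomorphism, $W_a(\sF\bul)$ is not $\Rpi_{M\ast}$ of any sheaf, so the map onto $\sG\bul_a$ must be written out by hand, and the care needed is in matching the section-multiplication maps with the differentials in degrees $0$ and $1$ (this is where the various ``restriction to $D$'' versus ``restriction to the thickened $D_aD$'' identifications enter, and where the largeness of $D$ is used) and in checking that the composite induces the correct map on all cohomology. Steps (ii), (iii) and the existence of the resolutions are routine.
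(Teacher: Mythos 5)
There is a genuine gap, and it sits exactly at the step you yourself flag as ``the one non-formal step''. The paper does not construct the chain map by hand: its proof is a reduction to Polishchuk--Vaintrob, observing that in the narrow case the homomorphism of \cite[Lemma 4.2.2]{PV2} coincides with \eqref{taua}, and then invoking the argument following \cite[Lemma 4.2.5]{PV2}, which produces two-term resolutions on which the derived morphism is realized by an honest homomorphism of complexes \emph{because} certain cohomology groups (the obstruction groups to strictifying a derived morphism) can be made to vanish by a suitable choice of resolution; since only finitely many vanishings are needed, one choice of $\sF\bul_j$ works for all $W_a$ at once. Your step (i) is precisely this content, and the explicit recipe does not go through as sketched. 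Write out the chain-map condition in degrees $0$--$1$ for your models: you must produce maps $\mu_p$ from the degree-one pieces of $W_a(\sF\bul)$ (a tensor of $F^0$'s with one factor $F^1_j=\pi_{M\ast}(\cL_j(D)|_D)$) into $\sG^1_a=\pi_{M\ast}\bl W_a(\cL_1,\cdots,\cL_n)(D_aD)|_{D_aD}\br$ satisfying $\sum_p\pm\,\mu_p(s_1,\cdots,\bar s_p,\cdots,s_{D_a})=(s_1\cdots s_{D_a})|_{D_aD}$. The right-hand side records the $(D_a-1)$-jet of the product along $D$, whereas multiplying $\bar s_p$ against the remaining (full) sections naturally lands only in the restriction to $D$; to promote it to a section over the thickened divisor $D_aD$ you must lift $\bar s_p$ from $D$ to $D_aD$, i.e.\ split the surjection $\cL_j(D)|_{D_aD}\to \cL_j(D)|_D$. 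Such splittings exist locally (choose a defining equation of $D$) but not globally in general; the local choices glue only up to an obstruction class in a positive-degree cohomology group on $M$, and killing that class by positivity or by modifying the resolution is exactly the vanishing argument the lemma is really about --- the part you have asserted rather than supplied. (A smaller slip of the same kind: as written, $\tau_a$ twists to a map into $\omega_{\cC_M/M}(D_aD)$, so your (ii) needs the $D_aD$-resolution of $\omega_{\cC_M/M}$, not the $D$-resolution; this is repairable.)

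A second unproved assertion is the existence of $D$ itself: you need an effective divisor $D\sub\cC_M$, finite and flat over $M=\barM_{g,\gamma}(G)$, disjoint from all nodes and markings, and sufficiently positive. Relative Serre vanishing and base change give the cohomological statements for a relatively ample \emph{twist}, but they do not produce an effective divisor: that requires a global section of a relatively very ample sheaf over the whole universal curve whose zero locus misses every node and marking of every fiber, and the universal curve over a proper moduli stack admits no such section in general (this is why the paper, e.g.\ in the proof of Lemma \ref{lem6.8}, builds its two-term complexes from the evaluation sequence $0\to\cE\to(\pi_M\sta\pi_{M\ast}\cK)\dual\otimes\cK\otimes\cE\to\cK_1\otimes\cE\to0$ rather than from a divisor). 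So the overall plan is reasonable, but the construction of $D$ and the chain-level realization of the multiplication map in (i) are the actual mathematical content of the lemma, and both are missing; the paper obtains them by citing the strictification argument of \cite{PV2}.
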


\begin{proof} It is proved in \cite{PV2}. 
We point out that the $\Sigma_j$ and $\Sigma_M$ defined in  \cite[(4.8)]{PV2} collect marked points which are not 
narrow and hence are empty set in our case. Thus the homomorphism in 
\cite[Lemma 4.2.2]{PV2} is the same as  (\ref{taua}), and are identical to $t_M=\tau_M$ in diagram \cite[(4.14)]{PV2}. 
By the argument following \cite[Lemma 4.2.5]{PV2}, two-term perfect  resolution $\sF\bul_j$ realizing $\tau_M$ (our $\nu_a$) 
as complex homomorphism exists. Since the existence of $\nu_a$ only relies on vanishings of cohomology groups, 
we can choose $\sF\bul_j$ so that all $\nu_a$ are realized as complex homomorphisms.
\end{proof}

We fix the complexes $\sF\bul_j$ and the complex homomorphisms $\nu_a$ given by the Lemma \ref{lem6.2}.   
For the  given $W_a(x)$, we continue to denote by $W_a(x)_j$ the 
partial derivative $\partial_{x_j} W_a(x)$, and denote
$\ti W_a(x)_j=(m_{aj})\upmo W_a(x)_j$(a monomial of coefficient $1$).
We abbreviate $\ti W_a(\sF^0)_j=\ti W_a(\sF^0_1,\cdots, \sF^0_n)_j$.

Since the degree one term of the complex 
$W_a(\sF\bul)$ is 
$\oplus_{j=1}^n  \ti W_a(\sF^0)_j\otimes \sF^1_j$, the degree one part of the $\nu_a$ is
\beq\label{nu1}
\nu_{a}^1:\oplus_{j=1}^n  \ti W_a(\sF^0)_j\otimes \sF^1_j\lra \sO_M.
\eeq
We let $F\bul=\oplus_{j=1}^n \sF\bul_j$; namely, $F^i=\sF^i_1\oplus\cdots \oplus \sF^i_n$ and
$F\bul=[\zeta: F^0\to F^1]$ is given by
$\zeta=\oplus_{j=1}^n \zeta_j$. For $\cE=\oplus_{j=1}^n \cL_j$,
we have $F\bul\cong \Rpi_{M\ast} \cE$ as derived objects.
Following the notation developed before \eqref{6.7}, we let $(Y, V, \bar\zeta)$ be constructed from $F\bul$, which gives us
the isomorphism
${\bar\zeta}\upmo(0)= X$, the obstruction theory $\phi_\zeta$, and the deformation complex
$E_\zeta\bul=q\sta F\bul$. 

We now show that each $\nu_a^1$ defines a homomorphism $\eta_a: V\to\sO_Y$. Indeed,
let $S$ be an affine scheme and $\rho: S\to Y$ be a morphism; let $\rho'=q\circ\rho: S\to M$
be the composite. As $Y$ is the total space of $F^0$, $\rho$ is given by a section
$s=(s_1,\cdots,s_n)\in \oplus_{j=1}^n \Gamma(\rho^{\prime\ast}\sF^0_j)$.
We abbreviate $W_a(s)_j=W_a(s_1,\cdots, s_n)_j$. We define
\beq\label{etaa}\eta_{a,\rho}: \rho\sta V=\oplus_{j=1}^n \rho^{\prime\ast} \sF^1_j\lra \sO_S
\eeq
via $(\dot s_1,\cdots,\dot s_n)\mapsto \rho^{\prime\ast}(\nu_a^1)(W_a(s)_1 \dot s_1,\cdots, W_a(s)_n \dot s_n)$.
As this definition satisfies base change, such $\eta_{a,\rho}$ descents to a homomorphism
$\eta_a: V\to \sO_Y$. Let $\alpha_a$ be the coefficient of $W_a$ in $W$. We define
$$\eta=\sum_a \alpha_a \eta_a: V\lra \sO_Y.
$$

\begin{lemm}\label{vani} We have $\eta\circ {\bar\zeta}=0$.
\end{lemm}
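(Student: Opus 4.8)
The plan is to check the vanishing of the global function $\eta\circ\bar\zeta\in\Gamma(Y,\sO_Y)$ after pullback along an arbitrary morphism $\rho\colon S\to Y$ from an affine scheme, reducing it to the single fact that each $\nu_a\colon W_a(\sF\bul)\to\sO_M[-1]$ of Lemma~\ref{lem6.2} is a homomorphism of complexes. Since $\eta=\sum_a\alpha_a\eta_a$, it suffices to prove $\eta_a\circ\bar\zeta=0$ for each fixed $a$.

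First I would unravel $\bar\zeta$ along $\rho$. Writing $\rho'=\ti q\circ\rho\colon S\to M$, the morphism $\rho$ into the total space $Y$ of $F^0=\bigoplus_j\sF^0_j$ is a section $s=(s_1,\dots,s_n)\in\bigoplus_j\Gamma(\rho^{\prime\ast}\sF^0_j)$, and the tautological section $\bar\zeta\in\Gamma(Y,V)$, $V=\ti q\sta F^1$, pulls back to $\zeta(s)=(\zeta_1(s_1),\dots,\zeta_n(s_n))\in\bigoplus_j\Gamma(\rho^{\prime\ast}\sF^1_j)$, because $\zeta=\bigoplus_j\zeta_j$. Substituting $\dot s_j=\zeta_j(s_j)$ in \eqref{etaa} then gives
\[
(\eta_a\circ\bar\zeta)|_\rho=\rho^{\prime\ast}(\nu_a^1)\bigl(W_a(s)_1\,\zeta_1(s_1),\ \dots,\ W_a(s)_n\,\zeta_n(s_n)\bigr).
\]

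The key step is to recognize the argument of $\rho^{\prime\ast}(\nu_a^1)$ as the image of the decomposable section $W_a(s)=s_1^{\otimes m_{a1}}\otimes\cdots\otimes s_n^{\otimes m_{an}}\in\Gamma(\rho^{\prime\ast}W_a(\sF^0))$ under the degree-zero differential $d_a$ of the complex $W_a(\sF\bul)$. By the Leibniz rule for the differential of the tensor product $W_a(\sF\bul)=(\sF\bul_1)^{\otimes m_{a1}}\otimes\cdots\otimes(\sF\bul_n)^{\otimes m_{an}}$ applied to this decomposable tensor, $d_a(W_a(s))$ is the sum, over $j$ and over the $m_{aj}$ copies of $\sF\bul_j$, of the term in which that copy of $\sF^0_j$ is replaced by $\sF^1_j$ via $\zeta_j$; under the identification of the degree-one term of $W_a(\sF\bul)$ with $\bigoplus_j\ti W_a(\sF^0)_j\otimes\sF^1_j$ used in \eqref{nu1}, which merges the $m_{aj}$ equal copies within the $j$-th block, this sum becomes $\bigl(m_{aj}\,\ti W_a(s)_j\otimes\zeta_j(s_j)\bigr)_{j}=\bigl(W_a(s)_j\otimes\zeta_j(s_j)\bigr)_{j}$, the last equality being the normalization $W_a(x)_j=m_{aj}\,\ti W_a(x)_j$. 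Hence the displayed expression equals $\rho^{\prime\ast}(\nu_a^1\circ d_a)(W_a(s))$.

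Finally, since $\nu_a$ is a homomorphism of complexes into $\sO_M[-1]$, which is concentrated in cohomological degree one, its degree-zero component vanishes and the chain-map identity in degree zero reads $\nu_a^1\circ d_a=0$. Therefore $(\eta_a\circ\bar\zeta)|_\rho=0$; as $\rho$ was arbitrary, $\eta_a\circ\bar\zeta=0$, and summing over $a$ gives $\eta\circ\bar\zeta=0$. I expect the only point requiring care to be the key step — tracking the multiplicity factors $m_{aj}$ produced when passing from the description of the degree-one term of $W_a(\sF\bul)$ as a direct sum of $m_{aj}$ equal copies in each block to the reduced form $\bigoplus_j\ti W_a(\sF^0)_j\otimes\sF^1_j$ in \eqref{nu1} — but these are precisely absorbed by the definition $\ti W_a(x)_j=(m_{aj})^{-1}W_a(x)_j$.
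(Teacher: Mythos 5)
Your proof is correct and follows essentially the same route as the paper: pull back along an arbitrary affine test morphism $\rho\colon S\to Y$, identify $\rho\sta(\eta_a\circ\bar\zeta)$ with $\rho^{\prime\ast}(\nu_a^1)$ applied to the image of the decomposable section $W_a(s)$ under the degree-zero differential of $W_a(\sF\bul)$ (the paper's $\delta_a^0$, whose Leibniz expansion absorbs the multiplicities $m_{aj}$ exactly as you describe), and conclude from the chain-map identity $\nu_a^1\circ\delta_a^0=0$. No gaps.
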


\begin{proof}
It suffices to show that
for the $\rho: S\to Y$ as before, $\rho\sta(\eta\circ{\bar\zeta})=0$.
To this end, we give another interpretation of this composition. 

First, by the definition of symmetric product of complexes, the part of 
$W_a(\sF\bul)$ containing the
degree zero and one terms is
$$\delta^0_a: W_a(\sF^0)\lra\oplus_{j=1}^n  \ti W_a(\sF^0)_j\otimes \sF^1_j,
$$
whose pullback via $\rho'$ is 
\beq\label{2}\rho^{\prime\ast}(\delta_a^0)(W_a(e))=\sum W_a(e)_j\otimes \zeta_j(e_j),
\quad e=(e_1,\cdots,e_n)\in \oplus_{j=1}^n  \rho^{\prime\ast}\sF^0_j.
\eeq
Because $\nu_a$ is a homomorphism of complexes, we have $\nu_a^1\circ \delta_a^0=0$.
Therefore
$$\rho\sta(\eta\circ{\bar\zeta})=\rho\sta\eta\circ\rho\sta{\bar\zeta}=\rho^{\prime\ast}(\nu_a^1)
(W_a(s)_j\zeta_j(s_j)) 
=(\rho^{\prime\ast}(\nu_a^1)\circ\rho^{\prime\ast})(\delta_a^0)(W_a(s))
=0.
$$
This proves the Lemma.
\end{proof}

%
%
%
%
%

We next endow $F_1$ a hermitian metric, thus inducing a hermitian metric  $(\cdot,\cdot)$ on $V$.
Via the linear-antilinear pairing $(\cdot, \cdot): V\times V\to \CC$, we obtain an anti-$\CC$-linear isomorphism $c: V\dual\to V$.
Viewing $\eta$ as a section in $V\dual$, then $c(\eta)\in C^\infty(V)$. We define
$$d_\eta=\bar\zeta+c(\eta)\in C^\infty(V).
$$

\begin{lemm}\label{corozero} 
The vanishing locus of $d_\eta$ is the zero section of $Y\to M$, thus is isomorphic to $M$ and proper.
\end{lemm}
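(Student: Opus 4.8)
The plan is to show that $d_\eta = \bar\zeta + c(\eta)$ vanishes at a point $y \in Y$ if and only if both $\bar\zeta(y) = 0$ and $\eta(y) = 0$, since $\bar\zeta(y)$ and $c(\eta(y))$ lie in a common fiber $V|_y$ of a Hermitian vector bundle, and I would pair $d_\eta(y)$ against itself. More precisely, if $d_\eta(y) = 0$ then $\bar\zeta(y) = -c(\eta(y))$; taking the Hermitian inner product of both sides with $\bar\zeta(y)$ gives $(\bar\zeta(y), \bar\zeta(y)) = -(c(\eta(y)), \bar\zeta(y))$. The right-hand side, by the definition of the anti-linear isomorphism $c$ induced by the pairing, equals $-\overline{\eta(y)(\bar\zeta(y))}$ (up to conjugation conventions), and by Lemma \ref{vani} we have $\eta \circ \bar\zeta = 0$, so this term vanishes. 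Hence $(\bar\zeta(y), \bar\zeta(y)) = 0$, which by positive-definiteness of the metric forces $\bar\zeta(y) = 0$, and then also $c(\eta(y)) = 0$, i.e. $\eta(y) = 0$.

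Next I would identify the common zero locus $(\bar\zeta = 0) \cap (\eta = 0)$. By construction $(\bar\zeta = 0) = X = \barM_{g,\gamma}(G)^p$, viewed as a substack of $Y$, with the point $y$ corresponding to a $G$-spin curve with fields $(\sC, \Sigma_i^\sC, \sL_j, \varphi_k, \rho_j)$. Restricted to $X$, the section $\eta$ is exactly the section whose $H^1$ (after the Serre-duality identification) realizes the cosection $\sigma$ constructed in \S3: indeed $\eta_{a,\rho}$ evaluates $\nu_a^1$ on $(W_a(\rho)_1 \dot\rho_1, \ldots, W_a(\rho)_n \dot\rho_n)$, which is precisely the pairing $\sum_{a,j} \alpha_a W_a(\rho)_j \cdot \dot\rho_j$ appearing in \eqref{sigma}. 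Therefore the locus where $\eta|_X$ vanishes as a functional on $V|_X = \Ob_{X/M}$ is the degeneracy locus $D(\sigma)$, which by Lemma \ref{Dege} is exactly the zero section $M \subset X \subset Y$ (all $\rho_j = 0$). Since $M = \barM_{g,\gamma}(G)$ is a proper DM stack, this gives the claim.

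The one point requiring care — and the likely main obstacle — is checking that the restriction of the algebraically-defined homomorphism $\eta: V \to \sO_Y$ to $X$ genuinely agrees, under the identification $E^\bullet_{X/M} \cong E^\bullet_\zeta = q^\ast F^\bullet$ of \eqref{6.7} and the Serre-duality isomorphism $R^1\pi_{X\ast}\omega_{\cC_X/X} \cong \sO_X$, with the cosection $\sigma = H^1(\sigma^\bullet)$ of \eqref{si-cosection}. This is a bookkeeping verification: the complex homomorphisms $\nu_a$ of Lemma \ref{lem6.2} represent the derived maps $\tau_a$ of \eqref{taua}, and $\tau_a$ in turn is built from the isomorphisms $\Phi_a$ of \eqref{uni} that enter the construction of $h$ and hence of $\sigma^\bullet$; one then matches the degree-one piece $\nu_a^1$ of \eqref{nu1} with $dh$ of \eqref{dh} after applying $\R\pi_{X\ast}$. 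I would carry this out by unwinding both constructions on an affine chart $\rho: S \to X$ and comparing the two formulas for the pairing, which are visibly the same expression $\sum_{a,j}\alpha_a W_a(\rho_S)_j \dot\rho_{S,j}$. Given this identification, the degeneracy statement is immediate from Lemma \ref{Dege}, and properness of the vanishing locus follows since $\barM_{g,\gamma}(G)$ is proper.
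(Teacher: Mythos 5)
Your proof is correct and follows essentially the same route as the paper: the paper composes the relation $\bar\zeta(v)+c(\eta)(v)=0$ with $\eta$ and uses $\eta\circ\bar\zeta=0$ together with positivity of the hermitian pairing to force $\eta(v)=0$ (hence $\bar\zeta(v)=0$), then invokes Lemma \ref{Dege}, exactly as you do after pairing with $\bar\zeta(y)$ instead. The identification of $\eta|_X$ with the cosection $\sigma$ that you flag is indeed the needed ingredient (it is Lemma \ref{lem6.8}(1) in the paper, and the paper's proof of this lemma leaves it implicit), so your extra care there is consistent with, not different from, the paper's argument.
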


\begin{proof} For a closed point $v\in Y$ lying over $w\in M$ such that
$d_\eta(v)=\bar\zeta(v)+c(\eta)(v)=0$, by composing  with $\eta$,  we have $\eta\circ c(\eta)(v)=0$.
As $c$ is induced by the duality from a hemitian metric,  $\eta\circ c(\eta)(v)=0$ if and only if
$\eta(v)=0$.
Applying Lemma \ref{Dege}, we conclude that this is possible if and only if $v$ lies in the zero section of $Y\to M$.
\end{proof}
 
By the compactness of the vanishing locus of the smooth section $d_\eta$, the pair
$(V,d_\eta)$ defines a localized Euler class
$$e\loc(V,d_\eta)\in H\lsta(M,\QQ).
$$
In \cite[Thm 4.1.8 (5)]{FJR2}, the authors constructed the Witten's top Chern class
$[\fW_{g}(\gamma)]\virt_{FJRW}$ (analogue to the class $[\barM_{g,\gamma}(G)^p]\virt$, where $W$ stands for the
polynomial $W$,) in a more
general setting; in the case $\gamma$ is narrow,
they proved that the class $[\fW_{g}(\gamma)]\virt_{FJRW}$ is given by the $e\loc(V,d_\eta)$
just constructed. We state it as a Proposition.

\begin{prop}[{\cite[Thm 4.1.8 (5)]{FJR2}}]\label{axiom5}
Let $([\CC^n/G],W)$ be an LG space; let $g$, $\ell$ be integers and $\gamma=(\gamma_i)_{i=1}^\ell$ be narrow.
Let $\epsilon=\rank F^0-\rank F^1$. Then
$$[\fW_{g}(\gamma)]\virt_{FJRW}=(-1)^\epsilon\cdot e\loc(V,d_\eta)
\in H\lsta(M).
$$
\end{prop}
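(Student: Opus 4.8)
The statement is \cite[Thm 4.1.8 (5)]{FJR2} specialized to the narrow case, so the plan is to exhibit the dictionary between the data $(Y,V,d_\eta)$ built in \S5 and the analytic apparatus of Fan--Jarvis--Ruan, and then to invoke their theorem. Recall that Fan--Jarvis--Ruan work on the (Banach) configuration space of $C^\infty$ sections $s=(s_1,\dots,s_n)$ of $\cE=\bigoplus_j\cL_j$ over the universal curve $\cC_M$, on which the Witten map
$$s\longmapsto \bigl(\dbar s_j+\overline{\partial_j W}(s_1,\dots,s_n)\bigr)_{j=1}^n$$
is a proper Fredholm section of the bundle of $\cE$-valued $(0,1)$-forms; a compactly supported transverse perturbation of it cuts out the cycle representing $[\fW_g(\gamma)]\virt_{FJRW}$. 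After an $L^2$-orthogonal projection onto a finite-dimensional subspace of sections this becomes a section of a finite-rank bundle over a finite-dimensional base, and the first step is to recognize the algebraic data of \S5 as exactly such a finite-dimensional model.

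First I would match the linear part: a two-term complex $\sF\bul_j=[\zeta_j\colon \sF^0_j\to\sF^1_j]$ with $\sF\bul_j\cong \Rpi_{M\ast}\cL_j$ is, at each closed point of $M$, a finite-dimensional model for the $\dbar$-operator on $\cL_j$ --- $\sF^0_j$ a truncated space of would-be sections, $\zeta_j$ the $\dbar$-operator, $\ker\zeta_j=R^0\pi_{M\ast}\cL_j$ the holomorphic sections, $\coker\zeta_j=R^1\pi_{M\ast}\cL_j$. Hence, with $F\bul=\bigoplus_j\sF\bul_j$, $Y$ the total space of $F^0$ and $V=\ti q\sta F^1$, the tautological section $\bar\zeta\in\Gamma(Y,V)$ is the algebraic incarnation of $(\dbar s_j)_j$, and $(\bar\zeta=0)=X=\barM_{g,\gamma}(G)^p$ recovers spin curves with \emph{holomorphic} fields. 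Next I would identify the nonlinear part: because $\gamma$ is narrow, $\partial_j W(s)\in\Gamma(\cC_M,\omega_{\cC_M/M}\otimes\cL_j\upmo)$ has no poles along the markings --- this is where Lemma \ref{lem6.2} and the vanishing of the sheaves $\Sigma_j,\Sigma_M$ of \cite[(4.8)]{PV2} enter --- so its complex conjugate makes sense, and the antilinear isomorphism $c\colon V\dual\to V$ induced by a hermitian metric on $F^1$ is precisely the passage $\partial_j W(s)\mapsto \overline{\partial_j W(s)}$. Since $\eta\in\Gamma(Y,V\dual)$ was built from the monomials $W_a(s)_j$ occurring in $\partial_j W$, the hybrid section $d_\eta=\bar\zeta+c(\eta)$ is, under the finite-dimensional reduction, FJR's Witten map; Lemma \ref{vani} ($\eta\circ\bar\zeta=0$) records the compatibility used to control its zero locus, and Lemma \ref{corozero} --- via the non-degeneracy of $W$, i.e.\ Lemma \ref{Dege} --- is the algebraic replacement for the $C^0$-estimate that makes the FJR zero locus compact.

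With the dictionary in place, $[\fW_g(\gamma)]\virt_{FJRW}$ is, by its construction in \cite{FJR2}, the homology class supported on the zero locus of a transverse perturbation of $d_\eta$, which is $e\loc(V,d_\eta)$ up to sign; the sign $(-1)^\epsilon$ with $\epsilon=\rank F^0-\rank F^1$ reconciles FJR's orientation convention (the complex orientation of $\ker-\coker$ of the linearized operator) with the normalization of $e\loc(V,d_\eta)$ as a localized Euler class of $V=\ti q\sta F^1$, after balancing against $\dim Y=\dim M+\rank F^0$. Independence of the auxiliary choices --- the hermitian metric on $F^1$, the complexes $\sF\bul_j$, and the realizations $\nu_a$ of \eqref{taua} as complex homomorphisms --- follows from homotopy invariance of localized Euler classes, since any two choices are joined by a path along which the argument of Lemma \ref{corozero} keeps the zero locus compact.

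The genuinely hard work --- that the Witten map is a proper Fredholm section, that its perturbation defines a well-defined homology class, and that the finite-dimensional reduction is legitimate --- is carried out in \cite{FJR2}; the remaining obstacle for us is purely the bookkeeping of matching the \cite{PV2}/\cite{FJR2} marked-point conventions in the narrow case and pinning down the sign, after which the Proposition is \cite[Thm 4.1.8 (5)]{FJR2}.
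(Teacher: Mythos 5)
Your proposal is correct and matches the paper's treatment: the paper does not prove this Proposition either, but simply quotes it as [FJR2, Thm 4.1.8(5)] in the narrow case, the identification of $(Y,V,d_\eta)$ with FJR's finite-dimensional reduction of the Witten map being exactly the content of the construction preceding the statement. Your write-up just spells out that dictionary in more detail than the paper does before deferring, as the paper does, to Fan--Jarvis--Ruan.
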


Let $\imath\lsta: A\lsta( M)\to H\lsta (M,\QQ)$ be the tautological homomorphism.

\begin{theo}\label{cop-FJRW}
Let the notation be as in Proposition \ref{axiom5}. Then
$$\imath\lsta[\barM_{g,\gamma}(G)^p]\virt=(-1)^\epsilon\cdot [\fW_g(\gamma)]\virt_{FJRW}\in H\lsta (M,\QQ).
$$
\end{theo}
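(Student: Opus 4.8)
The plan is to reduce the comparison between the cosection-localized virtual cycle $[\barM_{g,\gamma}(G)^p]\virt$ and the FJRW localized Euler class $e\loc(V,d_\eta)$ to a statement entirely internal to the pair $(Y,V,\bar\zeta,\eta)$ constructed from the chosen two-term resolutions $\sF^\bullet_j$. The key observation is that, by \eqref{6.7} and Lemma \ref{inde}, the intrinsic object $[\bC_\zeta]\in Z_\ast(h^1/h^0(E^\bullet_{X/M}))$ is canonically the virtual normal cone cycle appearing in the definition of $[\barM_{g,\gamma}(G)^p]\virt$, and it is independent of the chosen resolution; meanwhile the cosection $\sigma$ of $\Ob_{X/M}$ pulls back, under the isomorphism $X=(\bar\zeta=0)\subset Y$, to the cosection $\eta|_X$ of $V|_X=H^1(E^\bullet_\zeta)|_X$ (this is exactly the content of Lemma \ref{vani}, identifying $\eta\circ\bar\zeta=0$ with the factorization, together with the formula \eqref{etaa} matching the Serre-duality formula \eqref{si-cosection}). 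So the first step is to record the identity $0^!_{\sigma,\mathrm{loc}}[\bC_\zeta]=[\barM_{g,\gamma}(G)^p]\virt$ in $A_\ast(M)$, using that the localized Gysin construction depends only on the cone cycle inside the kernel-stack and on the cosection.

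The second and main step is to compute $0^!_{\sigma,\mathrm{loc}}[\bC_\zeta]$ by a ``clean intersection'' / excess-bundle argument, exactly as in the absolute case treated in \cite{KL}. Since $X=(\bar\zeta=0)$ is cut out inside the smooth (over $M$) stack $Y$ by a section $\bar\zeta\in\Gamma(Y,V)$ of a bundle pulled back from $M$, the obstruction theory $\phi_\zeta$ of \eqref{XE} is the standard one for a zero locus, and the virtual cone $\bC_\zeta\subset h^1/h^0(q^\ast F^\bullet)$ is the pullback to $X$ of the normal cone of $X$ in $Y$, sitting inside $q^\ast F^1|_X=V|_X$. The cosection $\eta$ is defined on all of $Y$ (not just $X$) with degeneracy locus the zero section $M\subset Y$ (Lemma \ref{corozero}, after passing from $c(\eta)$ back to the algebraic $\eta$ via Lemma \ref{Dege}). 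Therefore the cosection-localized Euler class $e_{\eta,\mathrm{loc}}(V)\in A_\ast(M)$ is defined globally over $Y$, and the compatibility of localized Gysin maps with the inclusion of zero loci of sections (\cite[Sections 2--4]{KL}) gives $0^!_{\sigma,\mathrm{loc}}[\bC_\zeta]=e_{\eta,\mathrm{loc}}(V)$ as cycles in $A_\ast(M)$, up to the sign $(-1)^\epsilon$ coming from comparing $V$ with its dual, where $\epsilon=\rank F^0-\rank F^1$.

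The third step is to pass to homology and invoke \cite{KL} once more: the image $\imath_\ast e_{\eta,\mathrm{loc}}(V)\in H_\ast(M,\QQ)$ equals the topological localized Euler class $e\loc(V,s)$ of the pair $(V,s)$ for any $C^\infty$ section $s$ whose zero locus is compact and deformation-equivalent to that of $\bar\zeta$; in particular for $s=d_\eta=\bar\zeta+c(\eta)$, which by Lemma \ref{corozero} has vanishing locus exactly the (proper) zero section $M$. Here one uses the straight-line homotopy $s_t=\bar\zeta+t\cdot c(\eta)$, $t\in[0,1]$: at every $t$ the degeneracy of the algebraic part forces the zero locus of $s_t$ to lie over $D(\sigma)=M$ (using the same argument as in Lemma \ref{corozero}, since $\eta\circ c(\eta)(v)=0\iff\eta(v)=0$ regardless of the $\bar\zeta$-term), so the homotopy is proper and the localized Euler classes agree. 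Combining with Proposition \ref{axiom5}, which identifies $e\loc(V,d_\eta)$ with $(-1)^\epsilon[\fW_g(\gamma)]\virt_{FJRW}$, yields $\imath_\ast[\barM_{g,\gamma}(G)^p]\virt=(-1)^\epsilon[\fW_g(\gamma)]\virt_{FJRW}$.

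The step I expect to be the main obstacle is the second one: carefully justifying that the cosection-localized Gysin map applied to $[\bC_\zeta]$ literally computes the globally-defined cosection-localized Euler class $e_{\eta,\mathrm{loc}}(V)$ over the smooth base $Y$, including getting the sign $(-1)^\epsilon$ right under the duality $V\rightsquigarrow V^\vee$ and checking that the metric $c:V^\vee\to V$ does not affect the algebraic degeneracy locus. This is where one must invoke the functoriality and compatibility properties of \cite{KL} (cosection localization commutes with Gysin pullback along regular embeddings, and the localized Euler class of a section with proper zero locus sits in the expected homology class) rather than recompute anything by hand; the rest is bookkeeping with the resolutions $\sF^\bullet_j$ and the already-established Lemmas \ref{inde}, \ref{vani}, \ref{corozero}.
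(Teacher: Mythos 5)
Your overall architecture is the same as the paper's (pass through the zero--locus presentation $X=(\bar\zeta=0)\sub Y$, identify the cosection with $\eta|_X$, and connect to $e\loc(V,d_\eta)$ via the analytic construction in \cite[Appendix]{KL} before invoking Proposition \ref{axiom5}), but your first step hides the technical heart of the argument and, as written, it does not go through. You assert that $[\bC_\zeta]$ ``is canonically the virtual normal cone cycle appearing in the definition of $[\barM_{g,\gamma}(G)^p]\virt$'' because of \eqref{6.7} and Lemma \ref{inde}. That is not what those statements give: \eqref{6.7} only identifies the two-term complexes $E\bul_{X/M}\cong E\bul_\zeta$ as objects, and Lemma \ref{inde} only says $[\bC_\zeta]$ is independent of the chosen resolution $F\bul$ of $\Rpi_{M\ast}\cE$. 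The cone cycle is determined not by the complex alone but by the obstruction-theory morphism used to embed the intrinsic normal cone, and $\phi_{X/M}$ (the direct-image theory of \cite{CL}, \eqref{XY}) and $\phi_\zeta$ (the zero-locus theory \eqref{XE}) are a priori different morphisms into the same complex. Proving $[\bC_{X/M}]=[\bC_\zeta]$ is exactly part (2) of Lemma \ref{lem6.8}, and in the paper it requires choosing a specific resolution $F\bul=[\pi_{M\ast}\cE^0\to\pi_{M\ast}\cE^1]$ built from a sufficiently ample twist of $\omega^{log}_{\cC_M/M}$ and then a Grothendieck-duality computation showing $\phi_\zeta=\phi^{\ge-1}_{X/M}$. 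Your appeal to ``the localized Gysin construction depends only on the cone cycle and the cosection'' is true but question-begging, since the equality of the cone cycles is what must be proved. Similarly, the identification of cosections is not ``exactly the content of Lemma \ref{vani}'': Lemma \ref{vani} only gives $\eta\circ\bar\zeta=0$, so that $\eta|_X$ descends to a cosection $\sigma_\eta$ of $H^1(E\bul_\zeta)$ (note $H^1(E\bul_\zeta)$ is a quotient of $V|_X$, not $V|_X$ itself); that $\sigma_\eta$ agrees with the Serre-duality cosection $\sigma$ of \eqref{si-cosection} is part (1) of Lemma \ref{lem6.8} and needs the explicit check that the complex-level maps $\nu_a^1$ realize the $\tau_a$ of \eqref{taua} on $H^1$.

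Two further points. First, your sign bookkeeping is internally inconsistent: if your step 2 really produced an extra $(-1)^\epsilon$ ``from comparing $V$ with its dual,'' then combining with Proposition \ref{axiom5} would give $\imath\lsta[\barM_{g,\gamma}(G)^p]\virt=[\fW_g(\gamma)]\virt_{FJRW}$, off by $(-1)^\epsilon$ from the statement; in fact the correct intermediate identity (the paper's Lemma \ref{lem6.6}) is $\imath\lsta[X]\virt_{\sigma_\eta}=e\loc(V,d_\eta)$ with no sign, and the $(-1)^\epsilon$ in the Theorem cancels purely against the FJRW normalization in Proposition \ref{axiom5}. Second, your homotopy $s_t=\bar\zeta+t\cdot c(\eta)$ is not proper at $t=0$ (the zero locus of $\bar\zeta$ is all of $X$, which is non-compact), so ``the homotopy is proper and the localized Euler classes agree'' fails as stated; the paper instead deforms the cycle side, using the graphs $\Gamma_t$ of $t\bar\zeta$ degenerating to the normal cone $\Gamma_\infty$, on all of which $\eta$ vanishes, together with a perturbation $d'_\eta$ of $d_\eta$ that is transversal and agrees with $d_\eta$ outside a compact set. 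With Lemma \ref{lem6.8} supplied and the homotopy argument repaired along these lines, your outline becomes the paper's proof.
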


Here is our strategy to prove this Theorem. Using the quasi-homogeneous polynomial $W$ we will 
construct a cosection $\sigma_\eta$ of the obstruction sheaf of 
the obstruction theory $\phi_\zeta$, with the property that its degeneracy loci is the zero section of $Y\to M$.
In this way, we will obtain a cosection localized virtual cycle $[X]\virt_{\sigma_\eta}$. Using the analytic construction of
cosection localized virtual cycle \cite[Appendix]{KL}, we conclude that $e\loc(V,d_\eta)=\imath\lsta [X]\virt_{\sigma_\eta}$.
Finally, by comparing the obstruction theories, we prove $[\barM_{g,\gamma}(G)^p]\virt=[X]\virt_{\sigma_\eta}$.

Now we construct the cosection $\sigma_\eta$. 
Let $\eta_X=\eta|_X$. Since $\eta\circ\bar\zeta=0$ and $\bar\zeta|_X=0$, for the differential $d\bar\zeta|_X: T_X Y\to V|_X$,  
we have $\eta_X\circ d\bar\zeta|_X=0$. Thus $\eta_X$ lifts to a cosection 
\beq\label{si-eta}
\sigma_\eta: \Ob_{X/M}=H^1(E\bul_\zeta) \lra \sO_X
\eeq
whose composition with $q\sta \Omega\dual_M\to H^1(E\bul_\zeta)$ vanishes.

By the proof of Lemma \ref{corozero}, we know that the $\sigma_\eta$ is surjective away from the zero section of $V\to M$;
thus the degeneracy loci $D(\sigma_\eta)=M\sub X$. This gives us the associated cosection localized virtual cycle 
$[X]\virt_{\sigma_\eta}\in A\lsta (M)$.

\begin{lemm}\label{lem6.6}
Let the notation be as stated, then we have
$\imath\lsta[X]\virt_{\sigma_\eta}=e\loc(V,d_\eta)$.
\end{lemm}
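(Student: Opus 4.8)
The plan is to identify $[X]\virt_{\sigma_\eta}$ with the localized Euler class $e\loc(V,d_\eta)$ by invoking the analytic description of the cosection-localized Gysin map from the Appendix of \cite{KL}. First I would recall the setup: $X=(\bar\zeta=0)\sub Y$ is cut out by the algebraic section $\bar\zeta\in\Gamma(Y,V)$, and its obstruction theory $\phi_\zeta$ has obstruction bundle $V|_X$ with intrinsic normal cone $\bC_\zeta\sub h^1/h^0(E\bul_\zeta)$ sitting (via the zero section of $V|_X$, since $\bar\zeta$ is a regular section generically) inside $h^1/h^0(E\bul_\zeta)$; the virtual cycle without cosection is $0^![\bC_\zeta]=e(V|_X)\cap[X]$, which equals $(-1)^\epsilon$ times the usual Euler-class construction $\bar\zeta^![Y]$. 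The point is that $\sigma_\eta$ is, by \eqref{si-eta}, the cosection on $\Ob_{X/M}=H^1(E\bul_\zeta)$ lifting $\eta_X=\eta|_X\in\Gamma(X,V\dual|_X)$, and after choosing a hermitian metric on $V$ the perturbed section $d_\eta=\bar\zeta+c(\eta)$ is exactly the perturbation of $\bar\zeta$ in the conormal direction prescribed by the cosection-localization recipe.

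Next I would quote \cite[Appendix]{KL}, which establishes that for a cone $\bC$ inside the kernel stack $h^1/h^0(E\bul)_\sigma$ of a cosection $\sigma$, with $D(\sigma)$ compact, the cosection-localized Gysin image $0^!_{\sigma,\loc}[\bC]\in A\lsta(D(\sigma))$ maps under $\imath\lsta$ to the topological localized class obtained by perturbing the tautological section of the obstruction bundle using $\bar\sigma$ (the complex-conjugate-transpose of $\sigma$ via the metric). In the present situation the obstruction bundle on the smooth-over-$\cM$ pieces is literally $V|_X$ (a genuine vector bundle, not just a bundle stack, because $Y\to M$ is smooth so $h^0$ contributes nothing), $\bC_\zeta$ is the zero section, and the perturbed section is $\bar\zeta+c(\eta)=d_\eta$. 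Hence $\imath\lsta 0^!_{\sigma_\eta,\loc}[\bC_\zeta]=e\loc(V,d_\eta)$, which is the claimed identity $\imath\lsta[X]\virt_{\sigma_\eta}=e\loc(V,d_\eta)$. I would also remark that by Lemma \ref{corozero} the vanishing locus of $d_\eta$ is the (proper) zero section $M$, matching $D(\sigma_\eta)=M$, so both sides live in $H\lsta(M,\QQ)$ and the statement typechecks.

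The one subtlety to address carefully is that the comparison in \cite[Appendix]{KL} is phrased for the \emph{absolute} cosection-localized virtual cycle, whereas here we work relative to $\cM=M$. So I would first reduce to the absolute case exactly as in the proof of Proposition \ref{loc-vir-sig1}: since $M$ is smooth, $\bC_{X/M}\to\bC_X$ is smooth, the cosection $\sigma_\eta$ factors through $\Ob_X$ (this uses the vanishing of the composition with $q\sta\Omega\dual_M$, already recorded after \eqref{si-eta}), and one may replace the relative obstruction bundle/complex by its absolute counterpart without changing the localized class. After that reduction the analytic input of \cite{KL} applies verbatim to $(V,\bar\zeta,\eta)$ over $Y$.

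The main obstacle I anticipate is bookkeeping rather than mathematics: one must make sure the \emph{signs} and the \emph{orientation conventions} of the algebraic localized Euler class $0^!_{\sigma_\eta,\loc}[\bC_\zeta]$ and of the topological localized Euler class $e\loc(V,d_\eta)$ agree on the nose (no stray $(-1)^{\rank V}$), and that the metric-induced antilinear identification $c:V\dual\to V$ used to form $d_\eta$ is the same one implicitly used in \cite{KL}'s perturbation. Once these conventions are pinned down — and the excerpt has already been careful to define $d_\eta=\bar\zeta+c(\eta)$ with this in mind — the proof is a direct citation; the $(-1)^\epsilon$ discrepancy with the FJRW normalization is then isolated entirely in Proposition \ref{axiom5} and will be handled when Lemma \ref{lem6.6} is combined with it to prove Theorem \ref{cop-FJRW}.
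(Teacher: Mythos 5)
There is a genuine gap, and it sits exactly where the real work of the paper's proof lies. You assert that ``$\bC_\zeta$ is the zero section'' of the obstruction bundle, so that the citation of \cite[Appendix]{KL} immediately identifies $\imath\lsta[X]\virt_{\sigma_\eta}$ with the perturbed intersection against the zero section, i.e.\ with $e\loc(V,d_\eta)$. This is false in the present situation: $X=(\bar\zeta=0)\sub Y$ is the moduli $\barM_{g,\gamma}(G)^p$, which is in general singular and of excess dimension, and the cone entering the cosection-localized construction is the normal cone $\bC_{X/Y}\sub V|_X$ (the limit $\Gamma_\infty$ of the graphs $\Gamma_t$ of $t\bar\zeta$), not the zero section $0_V$. (Your parenthetical ``since $\bar\zeta$ is a regular section generically'' does not help: where $\bar\zeta$ is regular the cone is the full bundle $N_{X/Y}=V|_X$, and in any case no regularity is available here.) The appendix of \cite{KL} therefore gives $\imath\lsta[X]\virt_{\sigma_\eta}=\zeta\lsta[\Gamma_\infty\cap d_\eta']$ for a suitable transversal perturbation $d_\eta'$ of $d_\eta$, whereas $e\loc(V,d_\eta)$ is by definition $\zeta\lsta[0_V\cap d_\eta']$. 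Bridging these two classes is precisely what your argument omits and what the paper's proof supplies: one runs the homotopy $\Gamma_t$, $t\in[0,\infty]$, from $0_V$ to $\Gamma_\infty$, and uses $\eta\circ\bar\zeta=0$ (so $\eta|_{\Gamma_t}\equiv 0$ for all $t$) together with the positivity of $\eta\circ d_\eta$ on $Y-M$ to conclude that $\bigcup_{t}(d_\eta'\cap\Gamma_t)$ stays in a compact subset of $V$; only then does the homotopy invariance give $[0_V\cap d_\eta']=[\Gamma_\infty\cap d_\eta']$ in $H\lsta(M,\QQ)$. Without this compactness control the intersection points could escape to infinity during the homotopy and the two localized classes need not agree, so the ``direct citation'' you propose does not close the argument.

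Two smaller inaccuracies feed into the same confusion: the identity ``$0^![\bC_\zeta]=e(V|_X)\cap[X]$'' is not available (it would require $X$ to be of the expected dimension), and the reduction to the absolute case via Proposition \ref{loc-vir-sig1}, while harmless, is not the issue — the relative-versus-absolute bookkeeping and the sign conventions you worry about are fine, but they are not where the proof can fail. What must be added is the graph/degeneration argument above (or an equivalent excess-intersection comparison between $0_V$ and $\bC_{X/Y}$ compatible with the cosection localization), using the algebraic identity $\eta\circ\bar\zeta=0$ in an essential way.
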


\begin{proof}
First, by the construction of the smooth section $d_\eta$, the $C^\infty$-homomorphism $\eta\circ d_\eta: V\to\CC_Y$ takes
value $0$ along $M\sub Y$ and takes positive value on $Y-M$. Let $\Gamma_t$ be the graph of $t{\bar\zeta}\in \Gamma(V)$;
and let $\Gamma_\infty$ be the limit of $\Gamma_t$. Then $\Gamma_\infty$ is the normal cone to
$X\sub Y$, embedded in $V|_X$ via the defining equation $X= ({\bar\zeta}=0)\sub Y$. 

Because $\eta\circ{\bar\zeta}=0$, for any $t\in [0,\infty]$,
we have $\eta|_{\Gamma_t}\equiv0$. Thus $(d_\eta\cap \Gamma_t)\cap V|_{Y-M}=\emptyset$.
We then pick a perturbation $d_\eta'$ of $d_\eta$ so that $d_\eta'$ is transversal to both the zero-section
$0_V\sub V$ and $\Gamma_\infty\sub V$, and that $d_\eta'= d_\eta$ away from a  compact subset of 
$Y$. Therefore,
$$e\loc(V,d_\eta)=\zeta\lsta[0_V\cap d_\eta']=\zeta\lsta[\Gamma_\infty\cap d_\eta']=\imath\lsta[X]\virt_{\sigma_\eta}
\in H\lsta(M,\QQ),
$$
where the first identity follows from the definition of the localized Euler classes, the second identity follows from
that $\Gamma_t$ is a homotopy between $[0_V]$ and $[\Gamma_\infty]$ and the union of $d'_\eta\cap \Gamma_t$
for $t\in [0,\infty]$ is contained in a compact subset of $V$, and  the last identity follows from the analytic construction of
the cosection localized virtual cycles  \cite[Appendix]{KL}.
\end{proof}

Recall that 
$$[\barM_{g,\gamma}(G)^p]\virt= 0_{\sigma,\text{loc}}^!\bl [\bC_{X/M}]\br\in A\lsta (M)
$$ 
is constructed via the localized Gysin map $0_{\sigma,\text{loc}}^!$
(cf. Definition-Proposition \ref{defi}) 
applied to 
the virtual normal cone cycle $[\bC_{X/M}]\in Z\lsta (h^1/h^0(E_{X/M}\bul))$ of the relative perfect obstruction
theory $\phi_{X/M}$ (cf. \eqref{XY}).
By the discussion before Lemma \ref{inde}, we have
$h^1/h^0(E\bul_{X/M})\cong h^1/h^0(E\bul_{\zeta})$ and $H^1(E_{X/M}\bul)\cong H^1(E_\zeta\bul)$.

\begin{lemm} \label{lem6.8}
Let the notation be as stated, then \\
(1). the two cosections are equal, i.e.,  
$\sigma_\eta=\sigma: H^1(E\bul_\zeta)\cong H^1(E_{X/M}\bul)\to\sO_X$.\\
(2). 
$[\bC_{X/M}]=[\bC_\zeta]\in Z\lsta( h^1/h^0(E_{X/M}\bul))\cong Z\lsta (h^1/h^0(E\bul_\zeta))$.
\end{lemm}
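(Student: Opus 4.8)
The plan is to prove the two assertions of Lemma \ref{lem6.8} by unwinding the definitions of both sides and checking they are literally the same construction. The key conceptual point is that both $\phi_{X/M}$ (the tautological obstruction theory from \cite{CL}) and $\phi_\zeta$ (the obstruction theory coming from $X = (\bar\zeta=0)\subset Y$) are obstruction theories for $X\to M$ whose underlying complexes are both canonically $q^\ast F^\bullet = q^\ast\Rpi_{M\ast}\cE$ (see \eqref{6.7} and \eqref{XE}), and similarly both cosections $\sigma$ and $\sigma_\eta$ are built from the same data: the partial derivatives $W_a(x)_j$ of the monomials of $W$ together with the isomorphisms $\Phi_a$ of \eqref{uni}. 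So I expect no new geometric input is needed; one only needs to trace through the chain of canonical identifications.

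For part (2), I would first recall that the virtual normal cone $[\bC_{X/M}]$ depends only on the obstruction theory $\phi_{X/M}$ up to the canonical identification of $h^1/h^0$-stacks, and likewise $[\bC_\zeta]$ depends only on $\phi_\zeta$. Hence it suffices to show that the two obstruction theories $\phi_{X/M}$ and $\phi_\zeta$ agree after the isomorphism \eqref{6.7}. This is essentially the statement that the description of $X = C(\pi_{M\ast}\cE_M)$ as the zero locus of the tautological section $\bar\zeta$ of $V = \tilde q^\ast F^1$ over $Y = \mathrm{Total}(F^0)$ is compatible with the obstruction theory constructed in \cite[Prop 2.5]{CL}; indeed the obstruction theory of \cite{CL} for $C(\pi_{M\ast}\cE_M)\to M$ is, by its very construction, the one pulled back from the linear model $[F^0\to F^1]$, which is exactly $\phi_\zeta$. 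I would cite \cite{CL} for this comparison and add a short remark (using Lemma \ref{inde}) that neither side depends on the choice of $F^\bullet$, so it is harmless to use the specific $F^\bullet = \oplus_j \sF^\bullet_j$ chosen via Lemma \ref{lem6.2}.

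For part (1), I would compare $\sigma$ of \eqref{si-cosection} and $\sigma_\eta$ of \eqref{si-eta} pointwise, using an affine test scheme $S$ and a morphism $S\to X$. On one side, $\sigma$ sends $(\dot\rho_{S,1},\dots,\dot\rho_{S,n})$ to $\sum_{a,j}\alpha_a W_a(\rho_S)_j\cdot\dot\rho_{S,j}$ via Serre duality (cf. \eqref{sigma}). On the other side, $\sigma_\eta = H^1$ of the lift of $\eta_X : V|_X\to\sO_X$, where $\eta = \sum_a\alpha_a\eta_a$ and $\eta_{a}$ is given on test points by $(\dot s_1,\dots,\dot s_n)\mapsto \nu_a^1(W_a(s)_1\dot s_1,\dots,W_a(s)_n\dot s_n)$ (cf. \eqref{etaa}). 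So I must check: (i) under the identification $V|_X = q^\ast F^1 = H^1$-part of $\oplus_j\sF^\bullet_j$ restricted to $X$, the sections $\dot s_j$ correspond to the $\dot\rho_{S,j}$; (ii) the map $\nu_a^1$ of \eqref{nu1}, which realizes the derived map $W_a(\sF^\bullet)\to \Rpi_{M\ast}\omega_{\cC_M/M}\cong\sO_M[-1]$ induced by $\tau_a$ of \eqref{taua} (equivalently $\Phi_a$ of \eqref{uni}), induces on $H^1$ exactly the Serre-duality pairing with $W_a(\rho_S)_j$ used in \eqref{sigma}. Both are formal consequences of the fact that $\nu_a$ was chosen to represent that particular derived morphism and that the cosection $\sigma^\bullet$ of \eqref{map}--\eqref{si-cosection} is by definition $\Rpi_{X\ast}$ applied to the pull-back of $dh$, which on monomials is exactly multiplication by $\alpha_a W_a(z)_j$ (cf. \eqref{dh}).

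The main obstacle I anticipate is purely bookkeeping: carefully matching the degree-one term $\nu_a^1$ of the complex homomorphism $\nu_a$ from Lemma \ref{lem6.2} with the Serre-dual description of $\sigma$ given in \S3, since the former lives in the language of explicit two-term complexes $\sF^\bullet_j$ resolving $\Rpi_{M\ast}\cL_j$ while the latter is phrased via $\Rpi_{M\ast}$ of an evaluation pullback. Both are well-known to be the same object, but writing the comparison so that the identifications of $h^1/h^0$-stacks, obstruction sheaves, and cosections are simultaneously compatible requires care; I would organize it by first fixing once and for all the isomorphism $E^\bullet_{X/M}\cong E^\bullet_\zeta$ of \eqref{6.7}, then verifying the cosection identity on $H^1$, and finally deducing the cone identity from the obstruction-theory identity and the functoriality of the intrinsic normal cone. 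Once (1) and (2) are in place, combined with Lemma \ref{lem6.6} and Lemma \ref{corozero} this yields $\imath_\ast[\barM_{g,\gamma}(G)^p]^{\mathrm{vir}} = \imath_\ast[X]^{\mathrm{vir}}_{\sigma_\eta} = e_{\mathrm{loc}}(V,d_\eta)$, and Theorem \ref{cop-FJRW} follows via Proposition \ref{axiom5}.
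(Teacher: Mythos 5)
Your treatment of part (1) is essentially the paper's own argument: both compare $\sigma$ and $\sigma_\eta$ on an affine test scheme $S\to X$, using that $\nu_a$ is a complex-level realization of $\tau_a$ so that $\rho\sta(\nu_a^1)(W_a(s)_j\dot s_j)$ turns into the Serre-duality expression of \eqref{sigma} after passing to $H^1$. That part is fine.

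The gap is in part (2). You correctly reduce it to showing that $\phi_{X/M}$ and $\phi_\zeta$ agree under \eqref{6.7}, but you then dispose of this by asserting that the obstruction theory of \cite[Prop 2.5]{CL} for $C(\pi_{M\ast}\cE_M)\to M$ is ``by its very construction'' the one pulled back from the linear model $[F^0\to F^1]$, and you propose to cite \cite{CL} for it, taking $F\bul=\oplus_j\sF\bul_j$ from Lemma \ref{lem6.2}. This cannot be cited: the construction in \cite{CL} is intrinsic (via the universal section on the universal curve and Grothendieck duality) and makes no reference to any two-term resolution, so the identification of its truncation $\phi_{X/M}^{\ge -1}$ with the Koszul-type theory $\phi_\zeta$ of the presentation $X=(\bar\zeta=0)\sub Y$ is precisely what must be proved, and it is where the paper does the real work. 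Moreover, for an arbitrary resolution such as the one from Lemma \ref{lem6.2} there is no direct handle on this comparison; the paper instead uses Lemma \ref{inde} in the opposite direction to what you propose, namely to replace $F\bul$ by a resolution adapted to the universal curve, $F\bul=[\pi_{M\ast}\cE^0\to\pi_{M\ast}\cE^1]$ coming from a short exact sequence $0\to\cE\to\cE^0\to\cE^1\to 0$ with $\cE^i$ built from a sufficiently ample power of $\omega_{\cC_M/M}^{\mathrm{log}}$ so that $R^1\pi_{M\ast}\cE^i=0$. Only for such a choice can one trace through the \cite{CL} construction: one forms the evaluation maps $e_Y:\cC_Y\to E^0$, $e_X:\cC_X\to E$, writes the induced map of truncated cotangent complexes, applies Grothendieck duality $\varphi$ to its columns (using the $R^1$-vanishing), and checks explicitly that $\varphi(h_{-1})=\bar\zeta$ and $\varphi(h_0)$ is the canonical isomorphism, which identifies \eqref{fi} with \eqref{XE} and hence gives $\phi^{\ge -1}_{X/M}=\phi_\zeta$ and $[\bC_{X/M}]=[\bC_\zeta]$. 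Without supplying this duality computation (or an equivalent argument), your part (2) asserts its conclusion rather than proving it.
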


\begin{proof}[Proof of Theorem \ref{cop-FJRW}]
By Lemma \ref{lem6.6}, we have
$[W_g(\gamma)]\virt_{FJRW}=\imath\lsta [X]\virt_{\sigma_\eta}$. By definitions,
$[X]\virt_{\sigma_\eta}=0^!_{\sigma_\eta,\text{loc}}[\bC_\zeta]$ and $\barM_{g,\gamma}(G)^p)\virt=
0^!_{\sigma,\text{loc}}[\bC_{X/M}]$. Applying Lemma \ref{lem6.8}, we conclude $0^!_{\sigma_\eta,\text{loc}}[\bC_\zeta]
=0^!_{\sigma,\text{loc}}[\bC_{X/M}]\in A\lsta (M)$. This proves the Theorem.
\end{proof}

\begin{proof}[Proof of Lemma \ref{lem6.8}]
We prove (1) first.
Let $S$ be an affine scheme, let
$\rho:S\to X$ and $\rho'=q\circ \rho:S\to M$ be morphisms. As in the discussion leading to
\eqref{etaa}, we assume that $\rho$ is given by 
$s=(s_1,\cdots,s_n)\in \oplus_{j=1}^n \Gamma(\rho^{\prime\ast}\sF^0_j)$.
Recall that $\rho\sta(\eta): \rho\sta V \to \sO_S$ (cf. \eqref{etaa}) sends 
$(\dot s_1,\cdots,\dot s_n)\in \rho\sta V=\oplus_{j=1}^n \rho^{\prime\ast}\sF^1_j$ to
$$\sum_a\alpha_a\rho^{\prime\ast}(\nu_a^1) (W_a(s)_1 \dot s_1,\cdots, W_a(s)_n \dot s_n).
$$
Let $\dot t_j\in H^1(\rho^{\prime\ast}\sF_j\bul)$ be the image of $\dot s_j$.
Then 
$$(\dot t_1,\cdots, \dot t_n)\in \rho\sta\Ob_{X/M}=\oplus _{j=1}^n H^1(\rho^{\prime\ast}\sF\bul_j),
$$
and because $\nu_a$ is the complex realization of the isomorphism $\tau_a$ in \eqref{taua},
and because $\rho\sta(\sigma_\eta)$ is the descent of $\rho\sta(\eta)$, we have
$$\rho\sta(\sigma_\eta)(\dot t_1,\cdots, \dot t_n)=\rho\sta(\eta)(\dot s_1,\cdots, \dot s_n)=
\sum_a\alpha_a\rho^{\prime\ast}(\nu_a^1)(W_a(s)_1 \dot s_1,\cdots, W_a(s)_n \dot s_n)
$$
$$\qquad\qquad\qquad=\sum_a\sum_j \alpha_a\rho^{\prime\ast}(\tau_a)(W_a(s)_j \dot t_j) 
\in \Gamma(\rho^{\prime\ast} R^1\pi_{M\ast}\omega_{\cC_M/M})
$$
where the last identity follows from the same reasoning before Lemma \ref{tri}. 
The last term above is of the same form as (\ref{sigma}), thus we conclude that $\rho\sta(\sigma_\eta)=\rho\sta(\sigma)$. This proves (1).

We next prove (2). 
Aplying Lemma \ref{inde}, it suffices to prove (2) for one complex of locally free
sheaves $F\bul=[F^0\to F^1]$ such that $F\bul\cong \Rpi_{M\ast}\cE$. We choose such a complex $F\bul$ now.

Since $\pi_M: \cC_M\to M$ is a family of pointed stable twisted curves,
$\omega_{\cC_M/M}^{log}$ is $\pi_M$-ample, thus for sufficiently large and divisible $r$, $\cK=(\omega_{\cC_M/M}^{log})^{\otimes r}$ 
is sufficiently ample and is a pullback of an invertible sheaf on the coarse moduli space of $\cC_M$.
Thus $\pi_M\sta\pi_{M\ast} \cK\to \cK$ is surjective. By dualizing this homomorphism, 
taking the cokernel, and then tensoring with $\cK$, we obtain
an exact sequence of locally free sheaves of $\sO_{\cC_M}$-modules
$$0\lra \sO_{\cC_M} \lra (\pi_M\sta\pi_{M\ast} \cK)\dual\otimes \cK\lra \cK_1 \lra 0.
$$
Tensoring with $\cE$, and renaming the second and the third terms to be $\cE^0$ and $\cE^1$, 
we obtain the following exact sequence of locally free sheaves of $\sO_{\cC_M}$-modules
\beq\label{ee}
0\lra \cE\lra \cE^0\lra  \cE^1 \lra 0. 
\eeq
Because $\cK$ is sufficiently ample, we have $R^1\pi_{M\ast} \cE^1=R^1\pi_{M\ast} \cE^0=0$.
We let our new complex $F\bul$ be
$$F\bul=[\zeta: F^0\to F^1]=[\pi_{M\ast}\cE^0\to \pi_{M\ast}\cE^1].
$$
By the vanishing, we have isomorphism of derived objects $F\bul\cong \Rpi_{M\ast}\cE$.

We let $\ti q: Y\to M$, $X=(\bar\zeta=0)\sub Y$, and the obstruction theory $\phi_\zeta$ be constructed from this 
$F\bul$ as before. We prove that $\phi_\zeta=\phi_{X/M}^{\ge -1}$. Note that this implies that $[\bC_\zeta]=
[\bC_{X/M}]$ as cycles in $Z\lsta (h^1/h^0(E_{X/M}\bul))$.

We let ${\cC_Y}=Y\times_M\cC_M$ and ${\cC_X}=X\times_M\cC_M$.  We denote by
$E$, $E^0$ and $E^1$ to be the (total spaces of the) vector bundles associated to $\cE$, $\cE^0$ and $\cE^1$, respectively.
Because $Y$ is the total space of $F^0$, via the projection $\ti q: Y\to M$, 
the isomorphism $F^0=\pi_{M\ast}\cE^0$ induces a tautological section in $ \Gamma(\ti q\sta\pi_{M\ast}\cE^0)$,
which composed with the tautological map $\pi_{M}\sta\pi_{M\ast}\cE^0\to\cE^0$ induces a section 
$e\in \Gamma(\cC_Y,\ti p\sta \cE^0)$, where $\ti p: \cC_Y\to \cC_M$ is the projection.
We let 
$e_Y: \cC_Y\to E^0$ be the evaluation morphism associated with $e$.
Via \eqref{ee}, its restriction to $\cC_X\sub \cC_Y$ then lifts to a section $e_X:{\cC_X}\to E$,
fitting into the Cartesian square
$$\begin{CD}
{\cC_X}@>e_X>>E\\
@VVV@VVV\\
{\cC_Y}@>e_Y>> E^0.
\end{CD}
$$

Let $I_{\cC_X}$ (resp. $I_E$) be the ideal sheaf of ${\cC_X}\sub {\cC_Y}$ (resp. $E\sub E^0$). 
As ${\cC_Y}$ and $E^0$ are smooth over $\cC_M$, we obtain the induced homomorphism
\beq\label{dia}
\begin{CD}
e_X\sta \tLL_{E/\cC_M} @= [e_X\sta\, I_E/I^2_E\to e_X\sta\,\Omega_{E^0/\cC_M}]\\
@VVV @VV{(h_{-1}, h_0)}V\\
\tLL_{{\cC_X}/\cC_M} @= [I_{\cC_X}/I^2_{\cC_X}\to\Omega_{{\cC_Y}/\cC_M}|_{{\cC_X}}]\\
\end{CD}
\eeq 
Let  $\tau:E\to \cC_M$ be the projection. 
Then $I_{E}/I_{E}^2= N\dual_{E/E^0}= \tau\sta \cE^{1\vee}$ and $\Omega_{E^0/\cC_M}|_E= \tau\sta \cE^{0\vee}$.  
Let $f=\tau\circ e_X:{\cC_X}\to \cC_M$ be the projection, thus $e_X\sta\tau\sta=f\sta$, and
$$e_X\sta\, I_E/I^2_E= f\sta \cE^{1\vee} \and  e_X\sta\,\Omega_{E^0/\cC_M} =f\sta \cE^{0\vee}.$$
Let $I_X$ be the ideal sheaf of $X\sub Y$. Because $\pi_X:\cC_X\to X$ is flat,
$I_{\cC_X}/I^2_{{\cC_X}}=\pi_X\sta(I_X/I_X^2)$ and $\tLL_{{\cC_X}/\cC_M}\cong \pi_X\sta\tLL_{X/M}$. 
Thus (\ref{dia}) is identical to
\beq\label{dia1}
\begin{CD}
e_X\sta \tLL_{E/\cC_M} @= [f\sta\cE^{1\vee}\to f\sta \cE^{0\vee}]\\
@VV{h}V@VV{(h_{-1}, h_0)}V\\
\pi_X\sta\tLL_{X/M} @= [\pi_{X}\sta (I_X/I^2_X)\to \pi_{X}\sta \Omega_{Y/M}] .
\end{CD}
\eeq


Applying  the Grothendieck duality 
$$ \varphi: \Hom_{\cC_X}(G_1\dual,\pi_X\sta G_2)\mapright{\cong} \Hom_X((\Rpi_{X\ast}G_1)\dual,G_2),
$$
for $G_1\in D^b(\cC_X)$ and $G_2\in D^b(X)$, to the columns of  (\ref{dia1}), 
and using  $R^1\pi_{X\ast}f\sta \cE^i=0$ becasue of $R^1\pi_{M\ast}\cE^i=0$, $i=0,1$,
we obtain the arrow $\varphi(h)=(\varphi(h_{-1}),\varphi(h_0))$ shown below:
\beq\label{fi}
 \begin{CD}
(F\bul)\dual @= [ (\pi_{X\ast}f\sta \cE^1)\dual\to (\pi_{X\sta}f\sta \cE^0)\dual]\\ 
 @VV{\varphi(h)}V@VV{(\varphi(h_{-1}), \varphi(h_0))}V \\ 
\tLL_{X/M} @= [ I_X/I^2_X\to \Omega_{Y/M}|_X]  .
\end{CD}
\eeq
Following \cite[Prop 2.5]{CL}, we have  $\phi_{X/M}=\varphi(h)\dual$. 
Also $\varphi(h_0)$ is given by $ (\pi_{X\sta}f\sta \cE^0)\dual= (q\sta \cE^0)\dual=  \Omega_{Y/M}|_X $. 
On the other hand, via $ \pi_{X\ast}f\sta \cE^1= q\sta \pi_{M\ast} \cE^1=q\sta F^1=V|_X$,
one checks that $\varphi(h_{-1})=\bar\zeta$.   Thus (\ref{fi}) is identical to (\ref{XE}).
This proves that $(\phi^{\ge -1}_{X/M})\dual=\phi_{\zeta}\dual$, thus $\phi^{\ge -1}_{X/M}=\phi_{\zeta}$.
This proves the Lemma.
\end{proof}

\subsection{Comparison with Polishchuk-Vaintrob's construction}
In this subsection we prove the equivalence with Polishchuk-Vaintrob's construction \cite{PV1} of Witten's top Chern classes.

We first introduce the initial data (triple) $(V,\zeta,\eta)$ over which the Polishchuk-Vaintrob's construction applies.
The triple consists of a pure rank vector bundle $V$ over a pure dimensional DM stack $Y$  over $\CC$, and
sections $\zeta\in\Gamma(Y,V)$ and $\eta\in\Gamma(Y,V\dual)$ such that $\eta\circ\zeta=0$. 
Note that the $(Y,\zeta,\eta)$ constructed in the previous subsection satisfies this requirement.

We let $X=(\zeta=0)\sub Y$, and let $D=(\eta=0)\cap X\sub X$. 
We first recall the prior construction that
gives a cosection localized virtual cycle $[X]\virt\loc$.
Let $\bC_{X/Y}$ be the normal cone of $X$ relative to $Y$, which is a subcone of $V\dual|_X$. We claim that
$\eta|_{\bC_{X/Y}}=0$. Indeed, since $\eta\circ\zeta=0$, and since $\eta: V\to \sO_Y$ is a homomorphism,
$\eta\circ (t\zeta)=0$. As $\bC_{X/Y}$ is the specialization of the graph of $t\zeta$ for $t\to \infty$, we have
$\eta|_{\bC_{X/Y}}=0$.

%
%
We define $\sigma=\eta|_{X}:V|_{X}\to \sO_{X}$; its degeneracy locus (non-surjective locus) is
$X\times_Y (\eta=0)=D$. 
We let $V|_X(\sigma)=V|_D\cup \ker\{V|_{X-D}\to\sO_{X-D}\}$ be the kernel stack of $\sigma$. Then
$[\bC_{X/Y}]\in Z\lsta (V|_X(\sigma))$.
Applying cosection localized  Gysin map $0_{\sigma,loc}^!: A\lsta (V|_X(\sigma))\to A\lsta (D)$, we
obtain the class
$$[X]\virt_{\sigma,loc}\defeq 0^!_{\sigma,loc}[\bC_{X/Y}]\in A\lsta (D).
$$

We remark that when $(V,\zeta,\eta)$ equals to 
$(E_0,V,\zeta,\eta)$ in the previous subsection, one has $X=\barM_{g,\gamma}(G)^p$
and $0^!_{\sigma,loc}[\bC_{X/Y}]=[\barM_{g,\gamma}(G)^p]\virt$, the Witten's top Chern class constructed using
cosection localization.

In \cite{PV1}, Polishchuk-Vaintrob used 
MacPherson's graph construction applied to a double-periodic complex to construct a cycle, 
which applied to the case $(Y,V,\zeta,\eta)=(E_0,Y,\zeta,\eta)$ yields their construction of
Witten's top Chern class of the LG space $([\CC^n/G],W)$. We briefly recall their
construction.

For $(Y,V,\zeta,\eta)$, one forms the vector bundles over $Y$:
$$
S^+=\bigoplus_{k \ \text{even}}\wedge^{k} V\dual,\quad S^-=\bigoplus_{k\ \text{odd}}\wedge^{k} V\dual
\and S=S^+\oplus S^-.
$$
The section $s=(\zeta,\eta)\in \Gamma(Y,V\oplus V\dual)$ defines a two periodic complex
\beq\label{S}S\bul=[\cdots\mapright{\wedge s} S^+\mapright{\wedge s} S^-\mapright{\wedge s} S^+\mapright{\wedge s} 
S^-\mapright{\wedge s}\cdots].
\eeq
Clearly, $S\bul$ is exact outside $D=(s=0)$.

By adopting MacPherson's graph construction to this case, 
they constructed a localized Chern character $ch_{D}^Y(S\bul)\in A\sta(D\to Y)$ of the two-periodic complex $S\bul$, 
and proved that (\cite[Thm 3.2]{PV1})
\beq\label{eq1}td(V|_{D})\cdot ch_{D}^Y(S\bul)\in A^r (D\to Y),\quad
r=\rank V.
\eeq

\begin{defi}[Polishchuk-Vaintrob \cite{PV1}]\label{def-PV}
Let $(V,\zeta,\eta)$ be as stated. Then the Witten's top Chern class of $(V,\zeta,\eta)$ is
\beq\label{ctop}c_{top}(V,\zeta,\eta):=td(V|_{D})\cdot ch_{D}^Y(S\bul)\cdot [Y]\in A_{\ast}(D).
\eeq
\end{defi}

The Witten's top Chern class constructed by Polishchuk-Vaintrob is by applying this 
Definition to the $(E_0,V,\zeta,\eta)$ constructed in the previous subsection.

\begin{prop}\label{P5.10}
Let the notation be as in Definition \ref{def-PV}. Suppose $Y$ is smooth. Then 
$$[X]_{\sigma,loc}\virt=c_{top}(V,\zeta,\eta)\in A\lsta (D).
$$
\end{prop}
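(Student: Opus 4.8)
The plan is to show that both sides of the claimed identity arise from the same cosection-localized Gysin operation applied to the same cone cycle, so that the comparison reduces to a known identity between the cosection-localized virtual class and the Polishchuk--Vaintrob localized Chern-character class for a triple $(V,\zeta,\eta)$ with $\eta\circ\zeta=0$ over a \emph{smooth} base $Y$. First I would recall that, since $Y$ is smooth, the embedding $X=(\zeta=0)\hookrightarrow Y$ carries the tautological perfect obstruction theory $\phi_\zeta\dual:[V\dual|_X\to\Omega_{Y/M}|_X]\to\tLL_{X/M}$ of the preceding subsection (here taking $M=\mathrm{pt}$, i.e.\ the \emph{absolute} version), whose virtual normal cone is exactly $\bC_{X/Y}\subset V\dual|_X$, and that $\sigma=\eta|_X$ is the induced cosection with $\eta|_{\bC_{X/Y}}=0$ as established just above Definition~\ref{def-PV}. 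Thus $[X]\virt_{\sigma,\mathrm{loc}}=0^!_{\sigma,\mathrm{loc}}[\bC_{X/Y}]$ literally by definition.

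Next I would invoke the comparison already carried out in \cite{KL} (or the reformulation in the proof of Lemma~\ref{lem6.6}): the cosection-localized Gysin map $0^!_{\sigma,\mathrm{loc}}$ applied to a cone inside the kernel stack $V\dual|_X(\sigma)$ can be computed as a \emph{localized top Chern (Euler) class}. Concretely, $0^!_{\sigma,\mathrm{loc}}[\bC_{X/Y}]$ is the localized Euler class $e_{\sigma,\mathrm{loc}}(V\dual|_X)\cap[\bC_{X/Y}]$ supported on $D=(\sigma=0)\cap X$, which after pushing to $Y$ is represented by the localized Chern class of the Koszul-type complex on $V\dual$ built from the pair $(\zeta,\eta)$. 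The key point is that on a smooth $Y$ the two-periodic complex $S\bul=[\cdots\to S^+\to S^-\to\cdots]$ of \eqref{S} is the Clifford/Koszul model whose MacPherson graph-construction class $ch^Y_D(S\bul)$, after multiplication by $td(V|_D)$, is precisely the localized Euler class of $V\dual$ twisted by the section $(\zeta,\eta)$; this is the content of \cite[Thm 3.2]{PV1} together with the Grothendieck--Riemann--Roch comparison between localized Chern characters and localized top Chern classes for Koszul complexes. So I would write $td(V|_D)\cdot ch^Y_D(S\bul)\cdot[Y]=e_{(\zeta,\eta),\mathrm{loc}}(V\dual)\cap[Y]$ in $A_*(D)$, then identify the right-hand side with $0^!_{\sigma,\mathrm{loc}}[\bC_{X/Y}]$ by deforming $(\zeta,\eta)$: the family $t\mapsto(\zeta,t\eta)$ (or the graph deformation $\Gamma_t$ of $t\zeta$ used in Lemma~\ref{lem6.6}) specializes $Y$ to $\bC_{X/Y}\subset V\dual|_X$ while keeping $\eta$ vanishing on every $\Gamma_t$, so the support of the localized class stays in $D$ throughout and the two classes agree.

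The main obstacle, and where I would spend the most care, is matching conventions between the two frameworks: the precise sign/twist relating $e_{\sigma,\mathrm{loc}}(V\dual)$ (as used in \cite{KL} and in this paper's Definition-Proposition~\ref{defi}) to $td(V|_D)\cdot ch^Y_D(S\bul)$ (as used in \cite{PV1}), and checking that the Todd-class correction in \eqref{ctop} is exactly the one produced by applying Grothendieck--Riemann--Roch to the localized Chern character of the exterior-algebra complex $S\bul$ of $V\dual$ with its Koszul differential $\wedge\zeta+\iota_\eta$. Since both sides are intrinsic (independent of the complex $F\bul$ resolving $\Rpi_{M\ast}\cE$ by Lemma~\ref{inde} on one side, and manifestly defined from $(V,\zeta,\eta)$ on the other), it suffices to verify the identity for the concrete model of the previous subsection, where $V=\ti q\sta F^1$, $Y=\mathrm{Tot}(F^0)$, $\zeta=\bar\zeta$, $\eta=\sum_a\alpha_a\eta_a$; there the computation is the standard one that the Koszul complex of a regular section computes the structure sheaf of its zero locus, localized away from $D$. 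Once the conventions are pinned down, the identity $[X]\virt_{\sigma,\mathrm{loc}}=c_{top}(V,\zeta,\eta)$ follows, and combined with Theorem~\ref{cop-FJRW} this yields the full comparison with Polishchuk--Vaintrob's class for the LG datum $([\CC^n/G],W)$.
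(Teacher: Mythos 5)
There is a genuine gap at the heart of your argument. The step where you declare that $td(V|_{D})\cdot ch_{D}^Y(S\bul)\cdot[Y]$ ``is precisely the localized Euler class of $V\dual$ twisted by the section $(\zeta,\eta)$,'' citing \cite[Thm 3.2]{PV1} plus a Grothendieck--Riemann--Roch comparison, is exactly the content of Proposition \ref{P5.10} and cannot be quoted: \cite[Thm 3.2]{PV1} only gives the degree statement recorded in \eqref{eq1}, and a GRR-type identity between localized Chern characters and localized top Chern classes is available only for a bounded Koszul complex, i.e.\ only when $\eta=0$ (this is the paper's Lemma \ref{step1}); for $\eta\neq 0$ the complex $S\bul$ is two-periodic and no off-the-shelf identity exists. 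Your proposed repair by the homotopy $t\mapsto(\zeta,t\eta)$ also fails: for $t\neq 0$ the localized classes are supported on $D$, but at $t=0$ the degeneracy locus jumps from $D$ to all of $X$, so the family does not stay supported in $D$ and one cannot transfer the $\eta=0$ case to the general case this way. Likewise, reducing ``to the concrete model of the previous subsection'' is not legitimate (the Proposition is a general statement about triples $(V,\zeta,\eta)$, and in the model $\zeta$ is not a regular section, so the ``Koszul complex of a regular section'' argument does not apply). Note also that your outline never uses the smoothness of $Y$, which is an essential hypothesis.

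What the paper actually does is: (i) prove the $\eta=0$ case by the Koszul computation (Lemma \ref{step1}) and the rank-one divisorial case (Corollary \ref{2.2}); (ii) use smoothness of $Y$ to blow up $(\eta=0)$, so that after pullback $\eta$ factors through a surjection $V\to\sO_Y(-Z)$ for a Cartier divisor $Z$, with both sides compatible with proper pushforward ($[\bC_{\ti X/\ti Y}]$ pushes to $[\bC_{X/Y}]$ by deformation to the normal cone, and $c_{top}$ pushes forward by \cite[Prop 2.3(iii)]{PV1}); (iii) deform the extension $0\to K\to V\to\sO_Y(-Z)\to 0$ to the split one over $\Ao$, keeping the degeneracy locus equal to $D\times\Ao$ throughout, and use \cite[Def 17.1 (C3)]{Fu} together with \cite[Thm 5.2]{KL} to see both classes are constant in $t$; (iv) in the split case, identify $S\bul$ with the two-periodic complex of $C\bul\otimes U\bul$ ($C\bul$ the Koszul complex of $\zeta$ in $K$, $U\bul$ the divisor complex) and conclude via \cite[Example 18.1.5]{Fu}, \cite[Prop 17.8.2]{Fu}, Lemma \ref{step1} and Corollary \ref{2.2}. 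None of these reduction steps appears in your proposal, and without them the asserted identity is unproved.
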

  
\begin{coro}
Let $(V,\zeta,\eta)$ be $(E_0,V,\zeta,\eta)$ constructed in the previous subsection.
Then $D=\barM_{g,\gamma}(G)$, $X=\barM_{g,\gamma}(G)^p$, and
$$[\barM_{g,\gamma}(G)^p)]\virt=c_{top}(V,\zeta,\eta)\in A_\ast( \barM_{g,\gamma}(G)).
$$
\end{coro}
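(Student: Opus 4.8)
The final statement is Proposition \ref{P5.10}: for a triple $(V,\zeta,\eta)$ with $Y$ smooth, $\eta\circ\zeta=0$, $X=(\zeta=0)$, $D=(\eta=0)\cap X$, one has $[X]\virt_{\sigma,loc}=c_{top}(V,\zeta,\eta)$. The proof should identify the cosection-localized virtual cycle (built from the normal cone $\bC_{X/Y}\subset V\dual|_X$ via the localized Gysin map for $\sigma=\eta|_X$) with the Polishchuk--Vaintrob class $td(V|_D)\cdot ch_D^Y(S\bul)\cdot[Y]$.

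The plan is to reduce the identity to a comparison of two localized operations on the same geometric input. First I would recall that, since $Y$ is smooth and $X=(\zeta=0)\subset Y$ is cut out by a section of $V$, the complex $[V\dual|_X\to\Omega_{Y/M}]$ — or in the absolute case $[V\dual|_X\to\Omega_Y|_X]$ — serves as a perfect obstruction theory for $X$, and $[X]\virt_{\sigma,loc}=0^!_{\sigma,loc}[\bC_{X/Y}]$ with $\bC_{X/Y}$ embedded in $h^1/h^0$ of this complex, which here is just the bundle $V\dual|_X$ (the $h^0$ part being the tangent bundle $T_Y|_X$, which acts trivially on cycles after the Gysin map). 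So the cosection-localized virtual cycle is literally the cosection-localized Euler-type class of $V\dual$ with respect to the section-cone $\bC_{X/Y}$. The key point, established earlier in the excerpt in the discussion around Lemma \ref{Dege} and the kernel-stack formalism, is that $\eta|_{\bC_{X/Y}}=0$, so the cone sits in the kernel stack of $\sigma$ and the localized Gysin map is defined.

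Next I would invoke the comparison between the cosection-localized Gysin map and MacPherson's graph construction for the two-periodic Koszul-type complex $S\bul=[\cdots\to S^+\to S^-\to\cdots]$ built from $s=(\zeta,\eta)$. The localized Chern character $ch_D^Y(S\bul)$ is, by the MacPherson graph construction, obtained by specializing the family of complexes $t\mapsto \wedge(t\zeta,\eta)$ (or the appropriate rescaling) as $t\to\infty$; its support limit is governed by the same cone $\bC_{X/Y}$ because $\zeta$ degenerates to $\bC_{X/Y}$ while $\eta$ stays. Concretely, I would write the localized Euler class $e_{\sigma,loc}(V\dual)$ against $[\bC_{X/Y}]$ as the top localized Chern class of $V\dual$ with the section $\eta$, and then use the Koszul resolution: $td(V|_D)\cdot ch_D^Y(\text{Koszul of }V\dual\text{ via }\eta)$ equals the localized top Chern class $c_{top}^{loc}(V\dual,\eta)$ by the Grothendieck--Riemann--Roch / local-to-global argument already quoted as \cite[Thm 3.2]{PV1} — the role of $td(V|_D)$ is precisely to correct the Chern character of the Koszul complex to the Euler class. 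Combining the two degenerations (the $\zeta$-degeneration producing $\bC_{X/Y}$, the $\eta$-degeneration producing the localized Euler class) is what matches $0^!_{\sigma,loc}[\bC_{X/Y}]$ with $td(V|_D)\cdot ch_D^Y(S\bul)\cdot[Y]$.

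The main obstacle is the bookkeeping in the two-parameter degeneration: one must check that the graph construction for $S\bul$ with the \emph{pair} $s=(\zeta,\eta)$ decomposes compatibly so that the $\zeta$-direction produces exactly the normal cone $\bC_{X/Y}$ used to define $[X]\virt_{\sigma,loc}$, and that the two localizations (to $D$, using $\eta$, and the Gysin pushforward to the zero section, using the cone) commute. I expect this to follow from the deformation-to-the-normal-cone compatibility already used in Lemma \ref{sub-cycle} and from the fact (Lemma \ref{inde}) that the cone cycle is independent of the chosen complex, together with the explicit local computation in \cite[\S3]{PV1}; but verifying that the rational-equivalence constructed by MacPherson's graph construction lies in the kernel stack of $\sigma$ — the analogue of the step in the proof of Lemma \ref{sub-cycle} where $R$ lies in the kernel stack of $\ti\sigma$ — is the technical heart, since it is what licenses applying $0^!_{\sigma,loc}$ to the graph limit. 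Once that is in place, the equality of the two classes in $A_\ast(D)$ is a formal consequence, and the Corollary for $(E_0,V,\zeta,\eta)$ is immediate from the identification $X=\barM_{g,\gamma}(G)^p$, $D=\barM_{g,\gamma}(G)$ recorded in the previous subsection.
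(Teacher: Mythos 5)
Your reduction of the Corollary itself is fine and matches the paper: once Proposition \ref{P5.10} is available, one only needs $Y$ smooth (it is the total space of $F^0$ over the smooth $M$), the identifications $X=(\bar\zeta=0)\cong\barM_{g,\gamma}(G)^p$ and $D=M$ (Lemma \ref{corozero}), and the equality $0^!_{\sigma,loc}[\bC_{X/Y}]=[\barM_{g,\gamma}(G)^p]\virt$ coming from Lemma \ref{lem6.8}. The gap is in your proposed proof of Proposition \ref{P5.10}. The step you yourself flag as the ``technical heart'' --- that the rational equivalence produced by MacPherson's graph construction for the two-periodic complex $S\bul$ is compatible with the cosection-localized Gysin map, i.e. that the $\zeta$-degeneration produces exactly $[\bC_{X/Y}]$ inside the kernel stack of $\sigma$ while the $\eta$-part yields the localization to $D$ --- is never argued, and the tools you point to do not supply it: Lemma \ref{sub-cycle} and Lemma \ref{inde} concern l.c.i.\ pullbacks and change of resolutions of $\Rpi_{M\ast}\cE$, not graph constructions for two-periodic complexes, and \cite[Thm 3.2]{PV1} only asserts that $td(V|_D)\cdot ch_D^Y(S\bul)$ has the expected degree. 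Moreover your intermediate identification is incorrect as stated: $S\bul$ is built from the pair $s=(\zeta,\eta)$ (wedging with $\zeta$ and contracting with $\eta$), so it is not ``the Koszul complex of $V\dual$ via $\eta$'', and $td(V|_D)\cdot ch_D^Y(S\bul)$ is not a localized Euler class of $(V\dual,\eta)$; an identification of that shape is only valid in the degenerate case $\eta=0$, which is precisely Lemma \ref{step1} of the paper.

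The paper avoids any direct analysis of the graph construction by a sequence of reductions: first the case $\eta=0$ via the Koszul complex and the functoriality properties of the localized Chern character (Lemma \ref{step1}, Corollary \ref{2.2}); then, for general $\eta$, a blow-up of $Y$ along $(\eta=0)$ reduces to the case where $\eta$ factors through a surjection $V\to\sO_Y(-Z)$ for a Cartier divisor $Z$ (both sides push forward correctly); then a deformation over $\Ao$ of the extension $0\to K\to V\to \sO_Y(-Z)\to 0$ to the split case, using deformation invariance of both classes (\cite[Thm 5.2]{KL} on the cosection side, \cite[Prop 2.3]{PV1} and \cite[Def 17.1]{Fu} on the Chern-character side); and finally, in the split case, the identification $S\bul\cong C\bul\otimes U\bul$ with $C\bul$ the Koszul complex of $(K,\zeta)$ and $U\bul=[\sO_Y\to\sO_Y(Z)]$, after which Fulton's calculus of localized Chern characters finishes the computation. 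If you want to pursue your more direct route you would have to prove the graph-construction compatibility from scratch; as written, your argument assumes exactly the statement to be proved at that point.
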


We divide the proof of the Proposition into  several Lemmas.
 

\begin{lemm}\label{step1} Let the situation be as in Definition \ref{def-PV}. Suppose $\eta=0$, then 
$$td(V|_{D})\cdot ch_{D}^Y(S\bul)=\zeta\sta [\iota_Y]\in A^{r}(X\to  Y),\quad r=\rank V,
$$
where $\iota_Y:Y\to V$ is the zero section, which defines the class $[\iota_Y]\in A^{r}(Y\to V)$, and
$X\to  Y$ is the inclusion. Consequently, $c_{top}(V,\zeta,0)=[X]_{\sigma,loc}\virt$.
\end{lemm}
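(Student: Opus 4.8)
The plan is to reduce everything to the known compatibility between MacPherson's graph construction and the Gysin map of a vector bundle. When $\eta=0$, the two-periodic complex $S\bul$ of \eqref{S} has differential $\wedge s=\wedge\zeta: S^\pm\to S^\mp$, which is precisely the Koszul-type differential built from the single section $\zeta\in\Gamma(Y,V)$. The Koszul complex $\wedge^\bu V\dual$ with differential contraction-by-$\zeta$ is the standard resolution whose localized Chern character computes the refined Gysin/top-Chern operation associated to the zero locus $X=(\zeta=0)$. Concretely, I would invoke the compatibility (in the form used by Polishchuk-Vaintrob, ultimately \cite[Ch.~18]{Fulton}-style localized Chern character calculus) that for the Koszul complex of $\zeta$ one has $ch_X^Y(\wedge^\bu V\dual)=td(V|_X)^{-1}\cdot\zeta^!$, i.e. $td(V|_X)\cdot ch_X^Y(\text{Koszul}(\zeta))$ equals the refined class $\zeta\sta[\iota_Y]\in A^r(X\to Y)$ of the section, where $\iota_Y\colon Y\to V$ is the zero section. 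The only bookkeeping point is to note that when $\eta=0$, $D=(s=0)\cap X = (\eta=0)\cap X = X$, so all the classes naturally live over $X$, and the two-periodicity of $S\bul$ is harmless: the graph construction of the folded-up complex reproduces the localized Chern character of the honest (bounded) Koszul complex.

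First I would recall the statement precisely: $ch_D^Y(S\bul)\in A\sta(D\to Y)$ is MacPherson's graph-construction class of a complex exact off $D$, and for $\eta=0$ we have $D=X$ and $S\bul$ is (up to folding by parity) the Koszul complex $K\bul(\zeta)=[\cdots\to\wedge^2V\dual\to V\dual\to\sO_Y]$. Second, I would cite the key identity—already implicit in \cite[\S2]{PV1} and in the localized-Chern-character formalism—that
\[
td(V|_X)\cdot ch_X^Y(K\bul(\zeta))=\zeta\sta[\iota_Y]\in A^r(X\to Y),
\]
which is the $K$-theoretic form of the self-intersection/excess formula for the zero section pulled back along $\zeta$. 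Third, capping with the fundamental cycle $[Y]$ and using that $\zeta\sta[\iota_Y]\cap[Y]$ is by definition the refined localized Euler class of $(V,\zeta)$, i.e. $0^!_V[\bC_{X/Y}]$ computed via deformation to the normal cone (here is where smoothness of $Y$ is used, so that $\bC_{X/Y}\sub V|_X$ and the Gysin map is the ordinary one), I would obtain
\[
c_{top}(V,\zeta,0)=td(V|_X)\cdot ch_X^Y(S\bul)\cdot[Y]=\zeta\sta[\iota_Y]\cap[Y]=0^!_{\text{loc}}[\bC_{X/Y}].
\]
Finally, observing that when $\eta=0$ the cosection $\sigma=\eta|_X$ is the zero map, so that $D(\sigma)=X$ and the cosection-localized Gysin map $0^!_{\sigma,loc}$ degenerates to the ordinary Gysin map $0^!_V$ (with kernel stack $V|_X(\sigma)=V|_X$ and $[\bC_{X/Y}]$ its fundamental cycle), we get $[X]\virt_{\sigma,loc}=0^!_V[\bC_{X/Y}]$, matching the display above. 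This yields $c_{top}(V,\zeta,0)=[X]\virt_{\sigma,loc}$.

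The main obstacle I anticipate is the bookkeeping around the two-periodic (folded) complex: one must check carefully that the graph-construction localized Chern character of the $\ZZ/2$-periodic complex $S\bul$ with differential $\wedge\zeta$ agrees with that of the bounded Koszul complex $K\bul(\zeta)$, rather than picking up an extra contribution from the ``other half'' of the periodicity. This is where the hypothesis $\eta=0$ is essential—without it $\wedge s$ genuinely mixes the two halves—and one should make precise the statement (cf. \cite{PV1}) that for $\eta=0$ the periodic complex splits, up to the usual alternating-sum sign conventions built into $ch$, as the sum of shifts of the bounded Koszul complex, so that no new classes arise. Once this identification is in place, the remaining steps are the standard excess-intersection and deformation-to-the-normal-cone computations, which I would not spell out in detail.
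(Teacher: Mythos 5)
Your proposal is correct and follows essentially the same route as the paper: identify $S\bul$ (for $\eta=0$) with the two-periodic folding of the Koszul complex of $\zeta$ (the paper invokes \cite[Prop 2.2]{PV1} for exactly this), use the identity $td(V|_X)\cdot ch_X^Y(\mathrm{Koszul}(\zeta))=\zeta\sta[\iota_Y]$ (which the paper derives by applying \cite[Prop 2.3(vi)]{PV1} to the tautological section over the total space of $V$ and pulling back via \cite[Prop 2.3(iii)]{PV1}, rather than citing it directly as you do), and then observe that with $\sigma=\eta|_X=0$ the cosection-localized Gysin map is the ordinary one applied to $[\bC_{X/Y}]\subset V|_X$. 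One tiny quibble: the inclusion $\bC_{X/Y}\subset V|_X$ needs no smoothness of $Y$ (it follows since $X=(\zeta=0)$), so your parenthetical about where smoothness enters is unnecessary, and indeed the lemma holds in the generality of Definition \ref{def-PV}.
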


\begin{proof} Let $p: V\to Y$ be the projection and $e\in\Gamma(V,p\sta V)$ be the tautological section. 
Let $\wedge^{-\bullet}p\sta V\dual$ be the Koszul  complex (concentrated in degrees $[-r,0]$) induced by $e$. 
Applying \cite[Prop 2.3 (vi)]{PV1}, one obtains
\beq\label{12}ch_{Y}^V(\wedge^{-\bullet}p\sta V\dual)=td(V)^{-1}[\iota_Y].\eeq
Applying  $\zeta\sta$ to (\ref{12}) and using compatibility between pullback and product of invariant classes, one has
\beq\label{ab} ch_{D}^Y(\zeta\sta\wedge^{-\bullet}p\sta V\dual)=\zeta\sta ch_{Y}^V(\wedge^{-\bullet}p\sta V\dual)=td(V|_{D})^{-1}\zeta\sta[\iota_Y]\eeq
where the first identity is by  \cite[Prop 2.3(iii)]{PV1}. Observe that the complex $\zeta\sta\wedge^{-\bullet}p\sta V\dual$ is identical to $[\cdots\to \wedge^k V\dual\mapright{\zeta} \wedge^{k-1} V\dual\to\cdots]$, and also its associated two periodic complex following \cite[Prop 2.2]{PV1} is identical to
(\ref{S}) because $\eta=0$. Hence  \cite[Prop 2.2]{PV1} implies the first term in (\ref{ab}) is equal to $ch_{D}^Y(S\bul)$. This proves the Lemma.
\end{proof}
 
\begin{coro}\label{2.2} If $Z\sub Y$ is a Cartier divisor, and let $U\bul=[\sO_Y\toright{1}\sO_Y(Z)]$, of amplitude $[0,1]$. 
Denote  $ch_Z^Y(U^{\bullet \vee})\in A\sta(Z\to Y)$ to be the localized Chern character defined in \cite[Sect 18.1]{Fu}. Then
$$td[\sO_Y(Z)]\cdot ch_{Z}^Y(U^{\bullet \vee})=[\iota_Z]\in A^1(Z\mapright{\iota_Z} Y)$$
and
$$td[\sO_Y(-Z)]\cdot ch_Z^Y(U^{\bullet })=-[\iota_Z]\in A^1(Z\mapright{\iota_Z} Y).$$
\end{coro}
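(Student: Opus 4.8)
The plan is to derive both identities from Lemma \ref{step1} applied to the rank-one bundle $V = \sO_Y(Z)$, after identifying the relevant sections and complexes. First I would set up the rank-one case of the Koszul/two-periodic machinery: take $V=\sO_Y(Z)$ with its canonical section $\zeta$ whose zero locus is precisely the Cartier divisor $Z\sub Y$, and take $\eta=0$. Then $S^+=\sO_Y$ and $S^-=V\dual=\sO_Y(-Z)$, and the two-periodic complex \eqref{S} built from $s=(\zeta,0)$ has $\wedge s$ acting as $\zeta$ in the relevant slots, so it is exactly the two-periodicization of the length-one Koszul complex $[\sO_Y(-Z)\mapright{\zeta}\sO_Y]$. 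That length-one Koszul complex is canonically identified, up to the obvious sign/duality bookkeeping, with $U^{\bullet\vee}=[\sO_Y(-Z)\to\sO_Y]$ placed in degrees $[-1,0]$, where $U\bul=[\sO_Y\toright{1}\sO_Y(Z)]$.

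Next I would invoke Lemma \ref{step1} directly in this rank-one situation: with $r=\rank V=1$ and $\eta=0$, it gives
$$td(V|_Z)\cdot ch_Z^Y(S\bul)=\zeta\sta[\iota_Y]\in A^1(Z\to Y),$$
where $\zeta\sta$ denotes pullback along the section $\zeta$ and $[\iota_Y]\in A^1(Y\to V)$ is the class of the zero section. Since $\zeta$ meets the zero section transversally along $Z$ (that is the content of $Z$ being the Cartier divisor cut out by $\zeta$), the pullback $\zeta\sta[\iota_Y]$ is exactly $[\iota_Z]\in A^1(Z\mapright{\iota_Z}Y)$. Combining this with the identification of $S\bul$ with the two-periodicization of $U^{\bullet\vee}$ — and using \cite[Prop 2.2]{PV1} (or the citation to \cite[Sect 18.1]{Fu}) to pass between the localized Chern character of a bounded complex and that of its associated two-periodic complex — yields $td[\sO_Y(Z)]\cdot ch_Z^Y(U^{\bullet\vee})=[\iota_Z]$, which is the first identity.

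For the second identity I would dualize. Tensoring $U^{\bullet\vee}$ by $\sO_Y(Z)$ (equivalently, running the same argument with $V=\sO_Y(-Z)$ and its section vanishing on $Z$, or applying the duality behavior of localized Chern characters recorded in \cite[Prop 2.3]{PV1}) relates $ch_Z^Y(U\bul)$ to $ch_Z^Y(U^{\bullet\vee})$ up to the Todd factor $td[\sO_Y(Z)]/td[\sO_Y(-Z)]$ and a sign coming from the shift in cohomological degree (the complex $U\bul$ sits in degrees $[0,1]$ versus $U^{\bullet\vee}$ in degrees $[-1,0]$, accounting for the factor $(-1)^1=-1$). Assembling these gives $td[\sO_Y(-Z)]\cdot ch_Z^Y(U\bul)=-[\iota_Z]$. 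The main obstacle I expect is purely bookkeeping: getting the sign and the direction of the Todd twist exactly right when translating between Polishchuk--Vaintrob's two-periodic-complex conventions in \cite{PV1} and Fulton's localized-Chern-character conventions in \cite[Sect 18.1]{Fu}, since the two frameworks index complexes and orient Koszul differentials differently. Everything else is a direct specialization of Lemma \ref{step1} to rank one.
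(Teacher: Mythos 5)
Your treatment of the first identity is exactly the paper's: apply Lemma \ref{step1} to $V=\sO_Y(Z)$ with its canonical section (the ``$1$'' in $U\bul$) and $\eta=0$, identify the resulting two-periodic complex with the two-periodicization of the Koszul complex $[\sO_Y(-Z)\to\sO_Y]=U^{\bullet\vee}$ via \cite[Prop 2.2]{PV1}, and use $\zeta\sta[\iota_Y]=[\iota_Z]$. For the second identity the paper simply quotes \cite[Example 18.1.2 (b)]{Fu} together with the first identity, whereas you do the computation by hand: $U\bul=(U^{\bullet\vee}\otimes\sO_Y(Z))[-1]$, so $ch_Z^Y(U\bul)=-\,ch(\sO_Y(Z))\cdot ch_Z^Y(U^{\bullet\vee})$, and the elementary identity $ch(L)\cdot td(L\dual)=td(L)$ converts the Todd twist, giving $td[\sO_Y(-Z)]\cdot ch_Z^Y(U\bul)=-[\iota_Z]$; this is the same bookkeeping Fulton's example packages, so the two routes are interchangeable and your version is self-contained. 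Two small cautions. First, your parenthetical alternative ``run the same argument with $V=\sO_Y(-Z)$ and its section vanishing on $Z$'' does not work: $\sO_Y(-Z)$ has no canonical section cutting out $Z$ (the canonical map goes $\sO_Y(-Z)\to\sO_Y$, not $\sO_Y\to\sO_Y(-Z)$), so only the tensor-and-shift route (or the duality statement in \cite{Fu}) is valid; since it was offered only as an aside, this does not affect the proof. Second, ``meets the zero section transversally'' is looser than needed: the equality $\zeta\sta[\iota_Y]=[\iota_Z]$ in $A^1(Z\to Y)$ holds for any effective Cartier divisor because both bivariant classes act as the Gysin map of the divisor $Z$; no transversality or smoothness of $Y$ is required.
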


\begin{proof} Applying Lemma \ref{step1} to  $V=\sO(Z)$, $\zeta=1$ and $\eta=0$, one obtains the first identity. 
The second identity follows from \cite[Example 18.1.2 (b)]{Fu} and the first identity.
\end{proof}
 
\begin{lemm}\label{id}  Let the notation be as stated, and suppose $Y$ is smooth, Then 
$$c_{top}(V,\zeta,\eta)=[X]_{\sigma,loc}\virt.
$$
\end{lemm}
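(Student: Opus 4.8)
The plan is to reduce Lemma \ref{id} to the special cases handled in Lemma \ref{step1} and Corollary \ref{2.2} by a deformation-to-the-normal-cone argument in the $\zeta$-direction, exactly as in the proof that Gysin pullback commutes with the cosection localization. First I would consider the rational equivalence on $Y$ given by the graph of $t\zeta\in\Gamma(V)$ as $t$ varies in $\A^1$, whose fiber over $t=\infty$ is the normal cone $\bC_{X/Y}\subset V\dual|_X$ (note the intended embedding is of $\bC_{X/Y}$ in $V|_X$, via the defining equation $X=(\zeta=0)$; I will be careful with this bookkeeping). Because $\eta\circ\zeta=0$ and $\eta$ is $\sO_Y$-linear, $\eta$ annihilates $t\zeta$ for all $t$, hence annihilates the whole family and in particular $\bC_{X/Y}$ — this is the already-recorded observation $\eta|_{\bC_{X/Y}}=0$. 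Thus the entire family lies in the kernel stack of the cosection $\sigma=\eta|_X$, so the localized Gysin map $0^!_{\sigma,\mathrm{loc}}$ is defined on the whole deformation and is constant along it.

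Next I would use the functoriality of the localized Chern character (the properties \cite[Prop 2.3]{PV1} already cited in Lemma \ref{step1}) to realize $c_{top}(V,\zeta,\eta)$ as the image of a class supported on the cone: concretely, $td(V|_D)\cdot ch_D^Y(S^\bullet)\cap[Y]$ should be computed by deforming $\zeta$ to $0$. Specifying the two-periodic complex $S^\bullet$ from $s=(\zeta,\eta)$ over the deformation space $Y\times\A^1$ (with $t\zeta$ in place of $\zeta$) gives a two-periodic complex exact off the family degeneracy locus; restricting to $t=0$ recovers the complex built from $(0,\eta)$, while restricting to $t=1$ recovers $S^\bullet$. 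Since the localized Chern character is compatible with this rational equivalence and $\eta$ controls the support, the resulting class over $t=0$ is, by the $\zeta=0$ computation dual to Lemma \ref{step1} (i.e.\ with the roles of $\zeta,\eta$ swapped so it is $\eta$ that is the nonzero section and $\zeta=0$), expressible through $0^!_{\sigma,\mathrm{loc}}$ applied to $[\bC_{X/Y}]$. Matching the normalizations (the $td$ factor, and the sign from $V$ versus $V\dual$) as in Corollary \ref{2.2} then identifies this with $[X]\virt_{\sigma,\mathrm{loc}}=0^!_{\sigma,\mathrm{loc}}[\bC_{X/Y}]$.

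The main obstacle I expect is the bookkeeping in two places simultaneously: first, keeping straight which bundle ($V$ or $V\dual$) the cone sits in and which section ($\zeta$ or $\eta$) is being deformed, since Polishchuk-Vaintrob's localized Chern character treats $(\zeta,\eta)$ symmetrically while the cosection-localized virtual cycle is built asymmetrically from $\sigma=\eta|_X$ and the cone $\bC_{X/Y}$ of $X=(\zeta=0)$; and second, verifying that MacPherson's graph construction / the localized Chern character is genuinely compatible with the specific rational equivalence coming from scaling $\zeta$ — that is, that one may compute $ch_D^Y(S^\bullet)$ after this deformation without changing the class. Once compatibility is granted, Lemma \ref{step1} supplies the $\zeta=0$ (equivalently, pure-$\eta$) evaluation and Corollary \ref{2.2} supplies the Todd/sign normalization, so the identity $c_{top}(V,\zeta,\eta)=[X]\virt_{\sigma,\mathrm{loc}}$ follows. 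Since $Y$ is assumed smooth, $\bC_{X/Y}$ is pure-dimensional and all the intersection-theoretic operations are unobstructed, so no additional transversality input is needed beyond what is already in \cite{KL,PV1,Fu}.
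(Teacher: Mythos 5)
There is a genuine gap, and it sits exactly at the step you defer ("once compatibility is granted"). Your plan is to scale $\zeta$ to $t\zeta$ and evaluate at $t=0$, invoking invariance of the localized Chern character along this family and then a "role-swapped" Lemma \ref{step1}. But the localized class $ch_{D}^Y(S\bul)$ lives in $A\sta(D\to Y)$ with $D=(\zeta=0)\cap(\eta=0)$, and along your family the degeneracy locus jumps: for $t\ne 0$ it is $D$, while at $t=0$ the complex built from $(0,\eta)$ is exact only outside $(\eta=0)$, which is in general strictly larger than $D$ (and the vanishing locus $X=(\zeta=0)$ becomes all of $Y$). The bivariant compatibilities you want to quote (\cite[Prop 2.3(iii)]{PV1}, \cite[Def 17.1 (C3)]{Fu}, \cite[Thm 5.2]{KL}) only apply when the support restricts correctly fiberwise, i.e.\ for families whose degeneracy locus is (say) a constant family over $\Ao$; here it is not, so neither $c_{top}$ nor the cosection-localized cycle is constant along your deformation, and at best you could compare the two sides after pushing into the Chow group of $(\eta=0)$, losing precisely the refinement to $A\lsta(D)$ that the Lemma asserts. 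Relatedly, at $t=0$ there is no cone $\bC_{X/Y}$ in the picture at all, so the claimed identification of the $t=0$ value with $0^!_{\sigma,loc}[\bC_{X/Y}]$ is not a computation one can perform; you are conflating the $t\to\infty$ graph degeneration (which produces $\bC_{X/Y}\sub V|_X$, not $V\dual|_X$, and underlies the remark $\eta|_{\bC_{X/Y}}=0$) with a $t\to 0$ specialization of the data $(\zeta,\eta)$ themselves.

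The paper's proof avoids this by deforming only in directions along which the degeneracy locus stays constant. Using smoothness of $Y$, one blows up $(\eta=0)$ so that, after pullback, $\eta$ factors through a surjection $\eta': V\to\sO_Y(-Z)$ for a Cartier divisor $Z$ (both sides are compatible with this proper pushforward). Then $\zeta$ lifts to the kernel bundle $K=\ker(\eta')$, and one deforms the extension $0\to K\to V\to\sO_Y(-Z)\to 0$ to the split case over $\Ao$; along this family the degeneracy locus is the constant family $D\times\Ao$, so $c_{top}$ and $[X]\virt_{\sigma,loc}$ are both constant in $t$. In the split case the two-periodic complex $S\bul$ factors as the tensor product of the Koszul complex of $(K,\zeta)$ with $U\bul=[\sO_Y\to\sO_Y(Z)]$, and the multiplicativity of localized Chern characters reduces the identity to Lemma \ref{step1} (applied to $(Y,K,\zeta,0)$) and Corollary \ref{2.2} (for the divisor factor), matching the cosection side $[X]\virt_{\sigma,loc}=-\iota_Z^! 0^![\bC_{X/Y}]$. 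If you want to salvage your approach you would need a substitute for the blow-up/kernel-bundle/splitting steps; a one-parameter scaling of $\zeta$ alone cannot do the work.
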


\begin{proof} 
We let $\lam:\ti Y\to Y$ be the blowing up of $Y$ along $(\eta=0)$, $\ti Y$. 
 Let $(\ti V,\ti\zeta,\ti\eta)\defeq (\lam\sta V, \lam\sta \zeta,\lam\sta\eta)$ be the pullback of $(V,\zeta,\eta)$ by $\lam$.
Let $\ti X:=(\ti\zeta=0)=\lam^{-1}(X)$, $\ti D:=(\ti \zeta=0)\cap (\ti\eta=0)=\lam^{-1}(D)$, and
$\ti\sigma=\ti\eta|_{\ti X}$. 

We denote $\lam'=\lam|_{\ti D}:\ti D\to D$. Since $Y$ is smooth, using deformation to normal cone construction, we conclude that
the cycle $\bC_{X/Y}\in Z\lsta (V)$ is the push-forward of $\bC_{\ti X/\ti Y}\in Z\lsta (\ti V)$ under the proper morphism $\ti V\to V$. 
Because Gysin map commutes with proper pushforward, we conclude
$$ \lam'_{\ast}(0^!_{\ti\sigma,loc}[\bC_{\ti X/\ti Y}])=0^!_{\sigma,loc}[\bC_{X/Y}]\in A\lsta (V).
$$ 
 
On the other hand, \cite[Prop 2.3(iii)]{PV1} implies $\lam'_{\ast}(c_{top}(\ti V,\ti\zeta,\ti\eta))=c_{top}(V,\zeta,\eta)$.
Thus to prove the Lemma it suffices to prove the case when (the triple $(V,\zeta,\eta)$) $\eta: V\to \sO_Y$ factors through
a surjective $\eta': V\to \sO_Y(-Z)\sub \sO_Y$ for a Cartier divisor $Z\sub Y$ while $Y$ is not necessary smooth.
Let $K\sub V$ be the kernel of $\eta'$, which is a vector bundle (locally free) since $\eta'$ is surjective. 
Since $\eta(\zeta)=0$, $\zeta\in \Gamma(V)$ lifts to $\zeta'\in\Gamma(K)$. 
 
The exact sequence 
\beq\label{KK}
0\lra K\lra V\lra \sO_Y(-Z)\lra 0
\eeq
usually is non-split. We will use a standard argument to reduce to the split case. 
 Let $\bar Y:=Y\times \Ao$ with projections $\pi_Y:\bar Y\to Y$. We form the vector bundle $\bar V$ on $\bar Y$ that fits
into the exact sequence
$$0\lra \pi_Y\sta K\lra \bar V\lra \pi_Y\sta \sO_Y(-Z)\lra 0
$$
so that its restriction to $t\ne 0$ (resp. $0$) is the exact sequence \eqref{KK} (resp. splits).

We let $\bar\zeta=\pi\sta_Y\zeta'\in \Gamma(\pi_Y\sta K)$.  We also view it as a section in $\bar V$ via the
inclusion $\pi_Y\sta K\sub \bar V$. We let $\bar\eta: \bar V\to \sO_{\bar Y}$ be the composite of
$\bar V\to \pi_Y\sta \sO_Y(-Z)$ with the inclusion $\pi_Y\sta\sO_Y(-Z)\sub \sO_{\bar Y}$.
 
The triple $(\bar V,\bar\zeta,\bar\eta)$ over $\bar Y$ has the property that $\bar X=(\bar\zeta=0)=X\times\Ao$
and $\oD:=\bar X\cap (\bar\eta=0)=D\times \Ao$.  
Let $\iota_t:D=\bar D\times_{\Ao} t\sub \oD$ be the inclusion, and denote $\bar Y_y=\bar Y\times_{\Ao}t$.
Then \cite[Prop 2.3(iii)]{PV1} and \cite[Def 17.1 (C3)]{Fu} imply that
$$\iota_t^! c_{top}(\bar V,\bar\zeta,\bar \eta)=c_{top}(\bar V|_{\bar Y_t},\bar\zeta|_{\bar Y_t},\bar\eta|_{\bar Y_t})\in A\lsta (\bar D_t)=A\lsta (D)
$$ 
is constant in $t\in \Ao$. 

On the other hand, let $\bar X_t=\bar X\times_{\Ao}t$,
and let $[\bar X_t]\virt_{loc}$ and 
$[\bar X]\virt_{loc}$ be the cosection localized virtual classes of the datum $(\bar V_t,\bar\zeta_t,\bar\eta_t)$ and $(\bar V,\bar\zeta,\bar\eta)$.
Since $\bar X$ is a constant family over $\Ao$, \cite[Thm 5.2]{KL} implies that $\iota_t^! [\bar X]\virt_{loc}=[\bar X_t]\virt_{loc}\in A\lsta (D)$ 
is constant in $t$. Thus to prove the Lemma we only need to show that $\iota_0^! c_{top}(\bar V,\bar\zeta,\bar\eta)=[\bar X]\virt_{loc}$. 
Equivalently, we only need to prove that in case \eqref{KK} splits, $c_{top}(V,\zeta,\eta)=[X]_{\sigma,loc}\virt$.

 
Using that $X=(\zeta=0)$ and $\zeta$ lifts to a section in $K\sub V$, the normal cone $\bC_{X/Y}\sub K|_{X}$, and $0_K^![\bC_{X/Y}]\in A\lsta(X)$. 
Following the definition of cosection localization \cite{KL}, we conclude
\beq\label{xa}[X]_{\sigma,loc}\virt=-\iota_{Z}^!0^![\bC_{X/Y}]\in A\lsta(D)
\eeq
where $\iota_{Z}^!:A\lsta (X)\to A_{\ast-1}(D)$ is the Gysin map associated to the Cartesian square
$$
\begin{CD}
D@>>> X\\
@VVV@VVV\\
Z@>{\iota_{Z}}>>Y.
\end{CD}
$$

 
Next we analyze $c_{top}(V,\zeta,\eta)$. The Koszul complex
$$C\bul:=[\cdots \to \wedge^k K\dual\mapright{\zeta}\wedge^{k-1} K\dual\to\cdots]
$$
(with $-k$ to be the amplitude of $\wedge^k K\dual$) is of finite length. Also let 
$$U\bul:=[\sO_Y\mapright{\sub} \sO_Y(Z)]
$$ 
be of amplitude $[0,1]$. As $C\bul$ is exact outside $X$ and $U\bul$ is exact outside $Z$, their tensor product
$$\fV\bul:=C\bul\otimes U\bul=[\cdots\to \fV^{k}\mapright{d_k}\fV^{k-1}\to\cdots]
$$ 
is exact outside $X\times_{Y}Z=D$ and of finite length. This complex leads to  a complex 
$$S_{\fV}\bul=[\cdots\to S_\fV^+\mapright{d^+} S_\fV^-\mapright{d^-} S_\fV^+\to\cdots].
$$
where $S_\fV^+=\oplus_k \fV^{2k}$ and $S_\fV^-=\oplus_k\wedge \fV^{2k+1}$, with differentials $d^+=\oplus_k d_{2k}$ and 
$d^-=\oplus_k d_{2k+1}$.

We next introduce
$$J_+=\oplus_k \wedge^{2k} V\dual \and J_-=\oplus_k \wedge^{2k-1} V\dual,
$$
and let $\zeta_\pm:J_\pm\to J_\mp$ be induced by contracting with $\zeta$. We have
$$S_\fV^+=J_+\oplus J_-(Z) \and S_\fV^-=J_-  \oplus J_+(Z).
$$ 
For $a\in J_+,a'\in J_-(Z)$ and $b\in J_-,b'\in J_+(Z)$, the boundary maps take form
$$d_+(a,a')=(\zeta_+(a),a\otimes1_Z+\zeta_-(a')) \and d_-(b,b')=(\zeta_-(b),b\otimes 1_Z+\zeta'(b')),
$$
where $1_Z$ is the section of $\sO_Y(Z)$ as the image of $1$ under $\sO_Y\to \sO_Y(Z)$.
Using the splitting $V=K\oplus \sO_Y(-Z)$ and its compatibility with $\zeta,\eta$, one checks 
that the two-periodic complex $S\bul$ is identical to $S_\fV\bul$. Thus   
$$ch_{D}^Y(S\bul)=ch_{D}^Y(S_\fV\bul)=ch_{D}^Y(\fV\bul),
$$
where the last term is the localized Chern character defined in \cite[Sect. 18]{PV1} (for the complex $\fV\bul$) 
and the last  equality is by \cite[Prop 2.2]{PV1}. Now as $\fV\bul=C\bul\otimes U\bul$, \cite[Example 18.1.5]{Fu} implies 
$$ch_{D}^Y(\fV\bul)= ch_{Z}^Y(U\bul)\cup ch_{X}^Y(C\bul).
$$
Using the splitting $V=K\oplus \sO_Y(-Z)$, and letting $\iota_X:X\to Y$  be the inclusion, (\ref{ctop}) becomes
\begin{eqnarray*} c_{top}(Y,V,\zeta,\eta)&:=&td(V|_{D})\cdot ch_{D}^Y(S\bul)\cdot [Y] \\ 
   &= &  [td(\sO_Y(-Z)|_Z)\cdot  ch_{Z}^Y(U\bul)]\cup [td(K|_{X})\cdot ch_{X}^Y(C\bul)]\cdot [Y]\\
   &=& \iota_X\sta([td(\sO_Y(-Z)|_Z)\cdot  ch_{Z}^Y(U\bul)]) (0^![\bC_{X/Y}])\\
   &=&-\iota_{Z}^!0^![\bC_{X/Y}]=[X]_{\sigma,loc}\virt,
\end{eqnarray*}
where the second equality follows from \cite[Prop 17.8.2]{Fu}, the third equality follows from Lemma \ref{step1} for $(Y,K,\zeta,0)$, 
and the last equality follows from Corollary \ref{2.2}.
\end{proof}

Combined, we have proved Proposition \ref{P5.10}.

%
%
%
%
%
%

\end{document}